\newtheorem{theorem}{Theorem}[section]
\newtheorem{definition}[theorem]{Definition}
\newtheorem{lemma}[theorem]{Lemma}
\newcommand{\norm}[1]{\left\lVert{#1}\right\rVert}
\newcommand{\N}{\mathbb{N}}
\newcommand{\R}{\mathbb{R}}
\newcommand{\C}{\mathbb{C}}
\newcommand{\ip}[1]{\left\langle {#1} \right\rangle}
\newcommand{\setbar}{{\hspace{0.1cm}\big{|}\hspace{0.1cm}}}
\newcommand{\spn}{{\text{span}}}
\begin{document}
\title{Topics in Hilbert Spaces, Spectral Theory, and Harmonic Analysis}
\author{Sawyer Jack Robertson}
\date{\today}
\vspace*{-1.2cm}
\maketitle
\tableofcontents

\section*{Author's Note}
This expository paper is based on work done while I was student at the University of Oklahoma under the guidance of Prof. Tomasz Przebinda in Spring 2018 and Spring 2019. In the time since, it has sat in my archive gathering dust and I have decided this year to return to this document, polish it, and share it with the community. This paper doesn't contain original research and is not intended for publication. Many thanks to Prof. Przebinda for his questions which have motivated my study of these various topics, and his corrections and suggestions as I have written this paper. \par
The contents of this paper are wide-ranging but connected by the topics of Hilbert spaces, spectral theory, and abstract harmonic analysis. The preliminary section includes the requisite facts and theorems for the remainder of the paper, as well as a digression on Riesz bases. The second and third sections are focused on deriving the general spectral theorem for Hilbert spaces in finite and arbitrary dimension, respectively. The fourth and fifth sections present applications of the general theory: first to finite abelian groups, then to the space $L^2(S^1,\mu)$.

\newpage
\section{Banach Algebras and Hilbert Spaces}

This preliminary section will consist of the fundamental definitions and tools of Banach algebras and Hilbert spaces, followed by a brief digression on Riesz bases for Hilbert spaces. Bold is occasionally in this section to help the reader refer back to these definitions quickly. $V$ will be a vector space. Unless otherwise specified, the field for $V$ will be taken as $\C$.
    
\subsection{Banach Algebras}

Shortly we will consider the spectral theory of certain classes of operator algebras. We will write down some preliminary details here so that the discussion may proceed more smoothly later on. This covers the general theory of Banach Algebras.

\begin{definition}
    Let $V$ be a vector space. A map $\norm{\cdot}:V\rightarrow\R$ is called a norm provided that for each $x,y\in V$ and $\alpha\in\C$,
            \begin{enumerate}
                \item $\norm{x}\geq 0$,
                \item $\norm{x+y}\leq \norm{x}+\norm{y}$,
                \item $\norm{\alpha x}=|\alpha|\norm{x}$,
                \item $\norm{x}=0\iff x= 0$.
            \end{enumerate}
    $(V,\norm{\cdot})$ is a called a \textbf{normed space}.
\end{definition}

Examples of normed spaces include $\C^n$ with any one of the following choices:
    \begin{enumerate}[label=\textit{(\roman*)}]
        \item $\norm{(z_1,z_2,\dots,z_n)}_2^2:=\sum_{i=1}^n|z_i|^2$
        \item $\norm{(z_1,z_2,\dots,z_n)}_\infty:=\max_{1\leq i\leq n}|z_i|$
        \item $\norm{(z_1,z_2,\dots,z_n)}_1:=\sum_{i=1}^n|z_i|$.
    \end{enumerate}

    \begin{definition}
        Let $\{x_n\}_{n=0}^\infty$ be a sequence in a normed space $V$. Then $x_n$ is said to be Cauchy if for each $\epsilon>0$ there is a natural number $N\geq 0$ such that for any $n,m\geq N$ it holds
            \[\norm{x_n-x_m}<\epsilon.\]
        A sequence $\{x_n\}_{n=0}^\infty$ in a normed space $V$ is said to converge to a limit $x\in V$ if for each $\epsilon>0$ there is a natural number $N\geq 0$ such that for any $n\geq N$, it holds
            \[\norm{x_n-x}<\epsilon.\]
        Finally, if each Cauchy sequence in a normed space $V$ is indeed convergent to a limit in $V$, then $V$ is called complete. In this important case, $V$ is called a \textbf{Banach space}.
    \end{definition}

\begin{definition}
    An \textbf{algebra} $\mathcal{A}$ over a field $\mathbb{F}$ is a vector space equipped with a bilinear product; that is, a mapping $\otimes:\mathcal{A}\times \mathcal{A}\rightarrow \mathcal{A}$ satisfying the following:
        \begin{enumerate}
            \item $(x+y)\otimes z=x\otimes x+y\otimes z$,
            \item $x\otimes (y+z)=x\otimes y+x\otimes z$,
            \item $ (\alpha\beta)x\otimes y=(\alpha x)\otimes (\beta y)$,
        \end{enumerate}
    for each $x,y,z\in\mathcal{A}$ and $\alpha,\beta\in\mathbb{F}$. $\mathcal{A}$ is called unital if there exists an identity element $e\in\mathcal{A}$ for which $e\otimes x=x\otimes e=x$ for each $x\in\mathcal{A}$. An element $x\in\mathcal{A}$ is called invertible if there exists an element $x^{-1}$ satisfying $x^{-1}\otimes x=x\otimes x^{-1}=e$. To be concise, the product operation is henceforth denoted by concatenation. 
\end{definition}

Algebras include $\R,\C$, as well as $M_n(\C)$, the space of $n\times n$ complex matrices. 

\begin{definition}
    Let $\mathcal{A}$ be an algebra and suppose $\mathcal{A}$ is equipped with a norm structure $\norm{\cdot}$ with respect to which $\mathcal{A}$ forms a Banach space. $\mathcal{A}$ is called a \textbf{Banach algebra} if, for each $x,y\in\mathcal{A}$ it holds
        \[\norm{xy}\leq\norm{x}\norm{y}.\]
\end{definition}

We introduce an important concept to pretext an example of a Banach algebra. Suppose $V,W$ are two vector spaces spaces over the same field, and $T:V\rightarrow W$ is a linear mapping. We say $T$ is \textit{bounded} if there exists a $C\geq 0$ for which $\norm{Tx}\leq C\norm{x}$ for each $x\in V$. It can be easily proved that the $\epsilon$-$\delta$ definition of continuity will coincide with the notion of boundedness.

\begin{definition}
    Suppose $V,W$ are vector spaces. We define the operator space $\mathcal{L}(V,W)$ to be the set of bounded operators $T:V\rightarrow W$. In a specific case, $\mathcal{L}(V)$ is the space of linear operators $V\rightarrow V$.
\end{definition}

\begin{definition}
    Let $V$ be a normed space. Then the following map $\norm{\cdot}$ defined by
        \[\norm{T}:=\inf\{C\geq 0\setbar \norm{Tx}\leq C\norm{x}\text{ for each $x\in V$}\}\]
    is a norm on $\mathcal{L}(V)$. Moreover, $\norm{\cdot}$ satisfies, for each $S,T\in\mathcal{L}(V)$,
        \[\norm{ST}\leq\norm{S}\norm{T}.\]
    If $V$ is a Banach space, then $\mathcal{L}(V)$ is a Banach algebra.
\end{definition}

\begin{theorem}[Gelfand-Mazur]
    Suppose $\mathcal{A}$ is a Banach algebra for which each $x\in\mathcal{A}\backslash\{0\}$ is invertible. Then $\mathcal{A}$ is isomorphic to $\C$, in the sense that there is a mapping $\phi:\mathcal{A}\rightarrow\C$ which is an isometric isomorphism of vector spaces.
\end{theorem}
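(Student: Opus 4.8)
\section*{Proof proposal}

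The plan is to prove something stronger in form but decisive in content: every element of $\mathcal{A}$ is a scalar multiple of the identity. Observe first that the hypothesis presupposes $\mathcal{A}$ is unital with identity $e$, and that $\mathcal{A} \neq \{0\}$ (else it could not be isomorphic to $\C$), so $e \neq 0$ and, by the submultiplicativity of the norm applied to $e = e^2$, we have $\norm{e} \geq 1$; after rescaling the norm if necessary we may assume $\norm{e} = 1$. Note also that $0$ is never invertible in a nonzero unital algebra, since $0\cdot y = 0 \neq e$.

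Fix $x \in \mathcal{A}$ and introduce the resolvent set $\rho(x) = \{\lambda \in \C : \lambda e - x \text{ is invertible}\}$ together with the resolvent map $R \colon \rho(x) \to \mathcal{A}$, $R(\lambda) = (\lambda e - x)^{-1}$. The crucial step is to show $\sigma(x) := \C \setminus \rho(x)$ is nonempty, which I would do by contradiction. Assuming $\rho(x) = \C$, I would establish two facts. First, via the Neumann series $(\lambda e - x)^{-1} = \lambda^{-1}\sum_{n \geq 0}(x/\lambda)^n$, which converges for $\abs{\lambda} > \norm{x}$, one gets $\norm{R(\lambda)} \leq (\abs{\lambda} - \norm{x})^{-1}$, so $R$ is bounded on $\C$ and $\norm{R(\lambda)} \to 0$ as $\abs{\lambda} \to \infty$. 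Second, from the resolvent identity $R(\lambda) - R(\mu) = -(\lambda - \mu)R(\lambda)R(\mu)$ (obtained by left- and right-multiplying $(\mu e - x) - (\lambda e - x)$ by the appropriate resolvents) one deduces that $R$ is continuous and then complex-differentiable with $R'(\mu) = -R(\mu)^2$; hence $R$ is an entire $\mathcal{A}$-valued function. Composing with an arbitrary bounded linear functional $\varphi$ on $\mathcal{A}$ produces a bounded entire scalar function $\varphi \circ R$, which is constant by Liouville's theorem; since the decay at infinity forces that constant to be $0$ and $\varphi$ was arbitrary, the Hahn--Banach theorem gives $R \equiv 0$, contradicting the invertibility (hence nonvanishing) of $R(\lambda)$.

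Therefore $\sigma(x) \neq \emptyset$; choose $\lambda_0 \in \sigma(x)$. Then $\lambda_0 e - x$ is not invertible, so by hypothesis $\lambda_0 e - x = 0$, that is, $x = \lambda_0 e$. Since $e \neq 0$ the scalar $\lambda_0$ is unique, and we may define $\phi \colon \mathcal{A} \to \C$ by letting $\phi(x)$ be the unique scalar with $x = \phi(x)\,e$. It is then routine that $\phi$ is linear, that $\phi(\lambda e) = \lambda$ gives surjectivity while $e \neq 0$ gives injectivity, and that $\norm{x} = \norm{\phi(x)\,e} = \abs{\phi(x)}\norm{e} = \abs{\phi(x)}$, so $\phi$ is an isometric isomorphism of vector spaces (one checks in passing that it also respects the product). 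The main obstacle is the nonemptiness of the spectrum: it is there that the genuine analytic input enters, namely the Neumann series estimate, the resolvent identity, and the vector-valued Liouville argument routed through Hahn--Banach.
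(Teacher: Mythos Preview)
Your argument is correct and is precisely the standard proof: establish nonemptiness of the spectrum via the vector-valued Liouville theorem (Neumann series for decay at infinity, resolvent identity for analyticity, Hahn--Banach to pass from scalar to vector), and then observe that the hypothesis forces $\lambda_0 e - x = 0$ for any $\lambda_0\in\sigma(x)$. The paper itself does not prove this theorem at all; it explicitly defers the proof to Folland and Remling, and what you have written is essentially the argument one finds in those references. So there is no alternative approach in the paper to compare against.

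Two small presentational remarks. First, the Neumann bound $\norm{R(\lambda)}\leq(\abs{\lambda}-\norm{x})^{-1}$ is only valid for $\abs{\lambda}>\norm{x}$, so it does not by itself show $R$ is bounded on all of $\C$; you need to pair it with the continuity of $R$ (which you do obtain from the resolvent identity) on the compact disk $\{\abs{\lambda}\leq\norm{x}+1\}$ before invoking Liouville. All the ingredients are present in your write-up, just slightly out of order. Second, your rescaling to $\norm{e}=1$ is necessary for the isometry claim, since otherwise $\norm{x}=\abs{\phi(x)}\norm{e}$; the paper's statement is tacitly assuming this normalization, and you are right to flag it.
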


The proof of this fact, while relatively accessible, does require a fair bit of machinery which we prefer to defer to the literature. Good sources on this include Folland\cite[p. 4]{Fo15} and Remling\cite[p. 73]{Re08}.

    \begin{definition}
        Let $\mathcal{A}$ be a commutative Banach algebra. Then a subspace $K\subset A$ is called a right ideal if for each $x\in K$ and $y\in \mathcal{A}$, $xy\in K$.
    \end{definition}

We say a right ideal $K$ is \textit{proper} if $K\neq \mathcal{A}$, and we notice that such an ideal is proper if and only if it does not contain any invertible elements (for if $x\in K$ is invertible, then $xx^{-1}=e\in K$ whence $K=\mathcal{A}$). Moreover, we say a proper ideal is \textit{maximal} if it is not contained in any larger proper ideals. Note every proper ideal is contained in some maximal ideal. This motivates the following lemma:

    \begin{lemma}\label{idealfunct}
        Let $\mathcal{A}$ be a commutative Banach algebra and suppose $K$ is a maximal ideal in $\mathcal{A}$. Then $K$ may be realized as the kernel of a functional $\psi:\mathcal{A}\rightarrow\C$ which is a multiplicative homomorphism.
    \end{lemma}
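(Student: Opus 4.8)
The idea is standard: pass to the quotient algebra $\mathcal{A}/K$, show it is a Banach algebra in which every nonzero element is invertible, apply Gelfand–Mazur, and let $\psi$ be the composition of the quotient map with the resulting isomorphism to $\mathbb{C}$. Let me think through each step.

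First, I need $\mathcal{A}/K$ to be a normed space. The quotient norm $\|x + K\| := \inf_{y \in K}\|x - y\|$ is the natural candidate, but for this to actually be a norm (not just a seminorm) I need $K$ to be closed. So the first real step is: a maximal ideal in a commutative unital Banach algebra is closed. The key point is that the closure $\overline{K}$ is again an ideal, and it is still proper — because the set of invertible elements is open (this uses the Neumann series, which requires completeness and the submultiplicative norm, plus $\|e\|$ being handled appropriately), so $K$ stays a positive distance away from $e$, hence $e \notin \overline{K}$. Then by maximality $\overline{K} = K$.

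Second, with $K$ closed, $\mathcal{A}/K$ is a Banach space under the quotient norm (completeness of quotients of Banach spaces by closed subspaces is routine), it inherits a well-defined bilinear product since $K$ is an ideal, and one checks the quotient norm is submultiplicative — so $\mathcal{A}/K$ is a Banach algebra, and it is unital with identity $e + K$ (here $e + K \neq 0 + K$ since $K$ is proper). Third, by maximality of $K$, the algebra $\mathcal{A}/K$ has no nontrivial proper ideals; a short argument shows a commutative unital algebra with no nontrivial ideals is a field — given $\bar{x} \neq 0$, the ideal $\bar{x}(\mathcal{A}/K)$ is nonzero hence everything, so $\bar{x}$ has an inverse. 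Now Gelfand–Mazur gives an isometric isomorphism $\phi : \mathcal{A}/K \to \mathbb{C}$, and $\psi := \phi \circ \pi$ (with $\pi : \mathcal{A} \to \mathcal{A}/K$ the quotient map) is a multiplicative linear functional with kernel exactly $\pi^{-1}(\{0\}) = K$.

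The main obstacle is the first step — showing $K$ is closed, which is where the Banach-algebra structure (openness of the group of units) genuinely enters; everything after that is formal algebra plus bookkeeping with the quotient norm. A secondary subtlety worth a remark: the lemma as stated needs $\mathcal{A}$ to be unital for $e + K$ to serve as an identity and for "invertible" to make sense in Gelfand–Mazur; I will assume that here, as is implicit in the surrounding discussion of invertible elements and ideals.
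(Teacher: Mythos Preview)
Your proposal is correct and follows essentially the same route as the paper: form the quotient $\mathcal{A}/K$, observe it is a Banach algebra in which every nonzero element is invertible (since $K$ is maximal), apply Gelfand--Mazur, and compose the resulting isomorphism with the quotient map. In fact your outline is more complete than the paper's own proof, which does not explicitly verify that $K$ is closed before invoking the quotient-norm construction; your argument via openness of the group of units fills that gap.
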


    \begin{proof}
        Put a relation on $\mathcal{A}$ by setting $x\sim y$ if $x-y\in K$. Let $\pi:\mathcal{A}\rightarrow \mathcal{A}\backslash K:x\mapsto[x]$ to be the quotient mapping sending each $x$ to its equivalence class in the quotient space. Using the standard quotient norm
            \[\norm{[x]}_{\mathcal{A}\backslash K}:=\inf_{k\in K}\norm{x-k}_\mathcal{A},\]
        the quotient space $\mathcal{A}\backslash K$ becomes a Banach algebra (note: multiplication in the quotient is well-defined from the commutativity of the algebra). Moreover, every nonzero element of this quotient space is invertible since the quotient space does not contain any proper ideals; whence, by Gelfand Mazur theorem, it is isomorphic to $\C$ by some isomorphism $\phi$. Set $\psi=\phi\circ\pi$; then, $\psi$ is a multiplicative functional on $\mathcal{A}$, taking the value 0 strictly on elements of $K$ as desired.
    \end{proof}

    \begin{definition}
        An \textbf{involution} on a Banach algebra $\mathcal{A}$ is a map $x\mapsto x^\ast$ so that for each $x,y\in \mathcal{A}$ and $\lambda\in\C$, the following hold:
            \begin{enumerate}
                \item $(x+y)^\ast=x^\ast+y^\ast$
                \item $(\lambda x)^\ast = \overline{\lambda}x^\ast$
                \item $(xy)^\ast = y^\ast x^\ast$
                \item $x^{\ast\ast}=x$
        \end{enumerate}
    \end{definition}
    
    \begin{definition}
        A Banach algebra $\mathcal{A}$ equipped with an involution $\ast$ is called a $\ast$-algebra; if $\norm{xx^\ast}=\norm{x}^2$ for each $x\in \mathcal{A}$, then $\mathcal{A}$ is called a $\mathbf{C}^\ast$\textbf{-algebra}.
    \end{definition}

%%%%%%%%%%%%%%%%%%%%%%%%%%%%%%%%%%%%%%%%%%%%%%%%%%%%%%%%%%%
\subsection{Hilbert Spaces}

    \begin{definition}
    Let $V$ be a vector space. A \textit{Hermitian inner product} on $V$ is a map $\langle\cdot,\cdot\rangle:V\times V\rightarrow\C$ satisfying the following four axioms for each $x,y,z\in V$ and $\alpha\in\C$: 
    
        \[\begin{array}{lc}
            \text{(Linearity in first argument)} & \langle x+y,z\rangle=\langle x,z\rangle+\langle y,z\rangle \\ 
            \text{(Homogeneity in first argument)} & \langle \alpha x, y\rangle=\alpha\langle x,y\rangle \\
            \text{(Conjugate symmetry)} & \langle x,y\rangle=\overline{\langle y,x \rangle}\\
            \text{(Positive definiteness)} & \langle x,x\rangle\geq 0\text{ and }\langle x,x\rangle =0\iff x=0
        \end{array}\]
    where the overline in the thir axiom denoted complex conjugation.
    \end{definition}

We say two vectors $x,y\in V$ are \textit{orthogonal} if $\ip{x,y}=0$. We define the \textit{induced norm} or \textit{length} of a vector $x\in V$ to be $\norm{x}=\sqrt{\ip{x,x}}$. A vector space equipped with a Hermitian inner product is called an inner product space.

    \begin{definition}
    Let $V$ be an inner product space. An \textbf{orthonormal basis} for $V$ is a collection of vectors $\{u_i\}_{i\in I}$ for which the following hold:
        \begin{enumerate}
            \item Each $u_i$ has norm one,
            \item each pair of distinct vecotrs in the collection are orthogonal,
            \item each $v\in V$ has a representation $v=\sum_{i\in I}v_iu_i$ as a converging series.
        \end{enumerate}
    \end{definition}
    
    \begin{definition}
        If $V$ is a complete inner product space, $V$ is called a \textbf{Hilbert space}.
    \end{definition}
    
One important example of a Hilbert space is $\C^n$ under the inner product
    \[\ip{(z_1,z_2,\dots,z_n),(w_1,w_2,\dots,w_n)}:=\sum_{i=1}^nz_i\overline{w_i}.\]
Another key example that will show up in another form later is the space $L^2(\R,\mu)$ of square-integrable functions:
    \[L^2(\R,\mu):=\left\{f:\R\rightarrow\C\text{ measurable}\setbar\int_\R|f(x)|^2d\mu(x)<\infty\right\}\]
where $\int d\mu$ is the standard Lebesgue measure on $\R$, see e.g., Royden \& Fitzpatrick \cite[Part 1]{Ro10}. The inner product in this space, similar to the preceding example, is
    \[\ip{f,g}:=\int_\R f(x)\overline{g(x)}d\mu(x).\]
Before moving onto operators, some quick topology. A set $U\subset V$ in a Hilbert space $V$ is said to be \textit{closed} if it contains the limits of all of the convergent sequences in the space. A set $O\subset V$ is said to be \textit{open} if for each $x\in O$ there is an $\epsilon>0$ for which
    \[\{y\in V\setbar \norm{x-y}<\epsilon\}\subset O,\]
that is, $O$ contains an open ball around the element with some positive radius. A Hilbert space $V$ is said to be \textit{separable} if it contains a sequence $\{x_n\}_{n=0}^\infty$ so that for any nonempty set $O\subset V$, there is an element of the sequence $\{x_n\}_{n=0}^\infty\ni x_k\in O$.
    \begin{theorem}
        A Hilbert space $V$ contains a countable orthonormal basis if and only if it is separable.
    \end{theorem}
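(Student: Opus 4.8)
The plan is to prove both implications separately. For the forward direction, suppose $V$ has a countable orthonormal basis $\{u_n\}_{n=0}^\infty$. The natural candidate for a dense sequence is the set of finite linear combinations $\sum_{n=0}^N q_n u_n$ where the coefficients $q_n$ range over $\mathbb{Q} + i\mathbb{Q}$ and $N \in \N$; this is a countable set, and I would enumerate it as $\{x_n\}_{n=0}^\infty$. Given any nonempty open set $O \subset V$ containing some point $x$ with an open ball of radius $\epsilon$ around it, I would first use condition (3) of the orthonormal basis definition to find a finite partial sum $\sum_{n=0}^N v_n u_n$ within $\epsilon/2$ of $x$, then approximate each complex coefficient $v_n$ by a rational-complex $q_n$ closely enough that the resulting vector lies within $\epsilon/2$ of the partial sum, hence within $\epsilon$ of $x$, hence in $O$. (Strictly, the definition of separable as stated only requires hitting nonempty sets, but one works with open balls; I would remark that it suffices to hit every open ball.)

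For the converse, suppose $V$ is separable with dense sequence $\{x_n\}_{n=0}^\infty$. The idea is to extract a linearly independent subsequence whose span is dense, then apply Gram--Schmidt. Concretely, I would inductively build a subsequence $\{x_{n_k}\}$ by discarding any $x_n$ that lies in the span of its predecessors among the already-chosen vectors, so that the retained vectors are linearly independent and have the same (dense) span as the full sequence. Applying the Gram--Schmidt process to this subsequence yields an orthonormal sequence $\{u_k\}$ with $\spn\{u_0,\dots,u_k\} = \spn\{x_{n_0},\dots,x_{n_k}\}$ for each $k$, so the closed linear span of $\{u_k\}$ contains the closure of $\spn\{x_n\}$, which is all of $V$. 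It remains to see that a maximal orthonormal sequence with dense span is actually an orthonormal basis in the sense of the stated definition, i.e. that every $v \in V$ has a convergent series expansion $v = \sum_k \langle v, u_k\rangle u_k$; this follows from Bessel's inequality (which makes the partial sums Cauchy, hence convergent by completeness) together with the density of the span (which forces the limit to equal $v$, since $v$ minus the limit is orthogonal to every $u_k$ and to a dense set).

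The main obstacle is the converse direction, specifically the passage from ``orthonormal sequence with dense span'' to ``orthonormal basis'' with genuine series convergence: this is where completeness of $V$ is essential and where one must invoke Bessel's inequality and the best-approximation property of partial sums. The forward direction and the Gram--Schmidt extraction are essentially bookkeeping. I would also take care with the degenerate case in which $V$ is finite-dimensional, where the ``sequence'' $\{u_k\}$ terminates and the series becomes a finite sum; the argument goes through unchanged with trivial modifications. Depending on how much of the standard Hilbert space toolkit (Bessel's inequality, orthogonal projection onto closed subspaces) the paper wishes to treat as known, the cleanest writeup may cite these as standard; I would state them explicitly as the load-bearing facts.
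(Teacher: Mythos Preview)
Your proposal is correct and follows the standard textbook argument. However, there is nothing to compare against: the paper does not prove this theorem at all, stating only ``We defer the proof of this theorem to the literature.'' Your sketch is therefore strictly more than what the paper provides. Both directions you outline are sound; the only point I would flag is to make sure that, in the converse direction, the Gram--Schmidt extraction handles the possibility that the dense sequence $\{x_n\}$ contains the zero vector or repeated vectors (your inductive discard step does this implicitly, but it is worth saying), and that Bessel's inequality and the orthogonal-complement argument you invoke at the end are indeed the load-bearing facts requiring completeness of $V$.
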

We defer the proof of this theorem to the literature.\par
Now we transition away from the algebraic and analytical structure of a Hilbert space and focus for a moment on the analytical structures associated to linear operators, in particular, those on Hilbert spaces. 

    \begin{definition}
        We define the \textbf{dual space} of $V$, denoted $V^\ast$ to be the vector space of continuous complex-valued linear transformations, or the set of linear functionals on $V$. 
    \end{definition}

In the case where $V$ has finite dimension, the continuity requirement is redundant. In the infinite-dimensional case, it is essential. 

    \begin{definition}
        Let $V$ be a Hilbert Space, and suppose $T\in\mathcal{L}(V)$. We define the \textbf{adjoint} of $T$ to be the unique operator $T^\ast$ satisfying
            \[\ip{Tx,y}=\ip{x,T^\ast y}\]
        for each $x,y\in V$.
    \end{definition}
    
The existence of an adjoint operator for each $T\in\mathcal{L}(V)$ is a consequence of the Riesz representation theorem for Hilbert Spaces. In fact, a matrix representation of an adjoint to an operator on a finite-dimensional Hilbert Space can be found by taking the conjugate transpose of a matrix representation of the operator. We say an operator is \textbf{\textit{normal}} if it commutes with its adjoint, and we say an operator is \textit{self-adjoint} if it is equal to its adjoint. 

    \begin{theorem}
    Let $V$ be a finite dimensional Hilbert Space, and suppose  $T\in\mathcal{L}(V)$ is normal. Then for each $x\in V$,
    \[\norm{Tx}=\norm{T^\ast x}\]
    \end{theorem}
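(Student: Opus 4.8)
The plan is to verify the identity by a direct computation with the induced norm, relying only on the defining property of the adjoint, the normality hypothesis, and the positive-definiteness of the inner product. Fix $x\in V$. Since $\norm{y}^2=\ip{y,y}$ for every $y\in V$, it suffices to prove $\norm{Tx}^2=\norm{T^\ast x}^2$ and then take (nonnegative) square roots.

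First I would expand $\norm{Tx}^2=\ip{Tx,Tx}$ and move one copy of $T$ across the inner product using the adjoint relation $\ip{Tu,v}=\ip{u,T^\ast v}$, obtaining $\ip{Tx,Tx}=\ip{x,T^\ast Tx}$. Next, invoke normality, $T^\ast T=TT^\ast$, to rewrite this as $\ip{x,TT^\ast x}$. Finally, I would push the remaining $T$ back across the inner product; here the cleanest route is through conjugate symmetry, since for any $w$ one has $\ip{x,Tw}=\overline{\ip{Tw,x}}=\overline{\ip{w,T^\ast x}}=\ip{T^\ast x,w}$, and applying this with $w=T^\ast x$ yields $\ip{T^\ast x,T^\ast x}=\norm{T^\ast x}^2$. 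Chaining the equalities gives $\norm{Tx}^2=\norm{T^\ast x}^2$, and the theorem follows.

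There is no genuine obstacle in this argument; the only point demanding a little care is applying the adjoint identity in the correct argument slot, which is why I would route the last step through conjugate symmetry rather than manipulate blindly. It is worth remarking that finite-dimensionality is not actually needed: the existence of $T^\ast$ (already granted in the excerpt for every $T\in\mathcal{L}(V)$) together with the inner-product axioms is all the argument uses, and the finite-dimensional hypothesis is simply harmless and consistent with the surrounding development. An equivalent packaging of the same computation is to note that the self-adjoint operator $S:=T^\ast T-TT^\ast$ vanishes by normality, so $\ip{Sx,x}=0$, i.e. $\norm{Tx}^2-\norm{T^\ast x}^2=0$; this reorganizes the three inner-product manipulations above around the single operator $S$.
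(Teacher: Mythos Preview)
Your proof is correct and follows exactly the same route as the paper: the chain $\ip{Tx,Tx}=\ip{x,T^\ast Tx}=\ip{x,TT^\ast x}=\ip{T^\ast x,T^\ast x}$ is precisely the computation given there, just with more justification spelled out at each step. Your remarks about the superfluity of the finite-dimensional hypothesis and the alternative packaging via $S=T^\ast T-TT^\ast$ are accurate side comments that go slightly beyond the paper's terse treatment.
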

    
    \begin{proof}
    The proof is routine: Fix $x\in V$, and notice
    \[\ip{Tx,Tx}=\ip{x,T^\ast Tx}=\ip{x,TT^\ast x}=\ip{T^\ast x,T^\ast x}\]
    \end{proof}
We now give some spectral theoretic definitions and two lemmas which will be used later on as needed.

    \begin{definition}
    Suppose $T\in\mathcal{L}(V)$ for some Hilbert Space $V$ has eigenvalue $\lambda\neq 0$. We define the eigenspace associated to $\lambda$ to be
    \[\mathcal{E}_\lambda:=\{v:Tv=\lambda v\}.\]
    \end{definition}

    \begin{lemma}
    Let $V$ be some Hilbert Space, and let $T\in\mathcal{L}(V)$ be normal. Then for every nontrivial $v\in V$, we have $Tv=\lambda v\iff T^\ast v=\overline{\lambda}v$.
    \end{lemma}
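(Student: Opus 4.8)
The plan is to reduce the claimed equivalence to the norm identity $\norm{Sv}=\norm{S^\ast v}$ for a normal operator $S$, applied to the shifted operator $S=T-\lambda I$, where $I$ denotes the identity operator on $V$.

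First I would check that normality of $T$ is inherited by $T-\lambda I$ for any scalar $\lambda$. Since $(T-\lambda I)^\ast=T^\ast-\overline{\lambda}I$, expanding
\[(T-\lambda I)(T^\ast-\overline{\lambda}I)=TT^\ast-\overline{\lambda}T-\lambda T^\ast+\abs{\lambda}^2 I\]
and
\[(T^\ast-\overline{\lambda}I)(T-\lambda I)=T^\ast T-\lambda T^\ast-\overline{\lambda}T+\abs{\lambda}^2 I,\]
one sees these agree once $TT^\ast$ and $T^\ast T$ are identified via normality of $T$; all remaining terms already match. So $T-\lambda I$ is normal.

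Next I would note that the computation behind the earlier theorem, namely $\ip{Sv,Sv}=\ip{v,S^\ast Sv}=\ip{v,SS^\ast v}=\ip{S^\ast v,S^\ast v}$, uses only the defining property of the adjoint together with the commutation $S^\ast S=SS^\ast$, and therefore is valid in an arbitrary Hilbert space, not merely the finite-dimensional case in which it was stated. Applying it with $S=T-\lambda I$ yields $\norm{(T-\lambda I)v}=\norm{(T^\ast-\overline{\lambda}I)v}$.

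Finally I would chain equivalences using positive definiteness of the inner product, which guarantees that a vector of length zero is the zero vector: $Tv=\lambda v$ iff $(T-\lambda I)v=0$ iff $\norm{(T-\lambda I)v}=0$ iff $\norm{(T^\ast-\overline{\lambda}I)v}=0$ iff $(T^\ast-\overline{\lambda}I)v=0$ iff $T^\ast v=\overline{\lambda}v$. I do not anticipate a genuine obstacle here; the hypothesis that $v$ is nontrivial is not actually used, since the equivalence holds trivially when $v=0$, and the only point deserving a word of care is the observation that the norm identity for normal operators does not require finite dimension.
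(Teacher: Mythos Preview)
Your proof is correct and follows essentially the same approach as the paper: reduce to the normal operator $T-\lambda I$, use the norm identity $\norm{Sv}=\norm{S^\ast v}$ for normal $S$ (which the paper phrases as $\ker S=\ker S^\ast$), and read off the equivalence. Your version is simply more explicit about verifying normality of $T-\lambda I$ and about why the norm identity holds beyond finite dimension.
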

    
    \begin{proof}
    Since $T,T^\ast$ commute, it is clear that for each $v,w\in V$ we have
    \[\ip{Tv,Tw}=\ip{T^\ast v,T^\ast w}.\]
    whence $\ker{T}=\ker{T^\ast}$. Making use of the fact that $T-\lambda I$ is also normal, we see
    \[Tv=\lambda v\iff v\in \ker{(T-\lambda I)}\iff v\in \ker{(T^\ast -\overline{\lambda}I)}\iff T^\ast v=\overline{\lambda} v.\]
    \end{proof}

Three more useful definitions will round out this section.

    \begin{theorem}
        Let $V$ be a separable Hilbert Space, and let $U\subset V$ be a closed linear subspace of $V$. Define the projection onto $U$, denoted $P_U$, by $P_Uv=\sum_i\ip{u_i,v}u_i$ for some fixed orthonormal basis $\{u_i\}$ of $U$. Then $P_U$ satisfies:
            \begin{enumerate}
                \item $P_U^2=P$,
                \item $P_U^\ast=P_U$,
                \item $\operatorname{Range}P_U=U$,
                \item For closed linear subspaces $U,W\subset V$, $P_{U\cap W}=P_UP_W=P_WP_U$.
            \end{enumerate}
    \end{theorem}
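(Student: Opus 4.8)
The plan is to verify the four properties directly from the definition $P_Uv=\sum_i\ip{u_i,v}u_i$, where $\{u_i\}$ is a fixed orthonormal basis of the closed subspace $U$. First I would establish a well-definedness remark: since $U$ is a closed subspace of a separable Hilbert space, it is itself a separable Hilbert space and so admits a countable orthonormal basis, and the series converges by Bessel's inequality together with completeness of $V$. I would also record the basic fact that $P_Uu_j=u_j$ for each basis vector (by orthonormality) and that $P_Uv=v$ precisely when $v\in U$, which is what makes $P_U$ the \emph{identity on} $U$.

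For property (1), idempotency, I would compute $P_U^2v=P_U\bigl(\sum_i\ip{u_i,v}u_i\bigr)=\sum_i\ip{u_i,v}P_Uu_i=\sum_i\ip{u_i,v}u_i=P_Uv$, using linearity and $P_Uu_i=u_i$. For property (2), self-adjointness, I would check $\ip{P_Uv,w}=\ip{v,P_Uw}$ by expanding both sides against the basis: $\ip{P_Uv,w}=\sum_i\ip{u_i,v}\ip{u_i,w}$ and $\ip{v,P_Uw}=\sum_i\overline{\ip{u_i,w}}\ip{v,u_i}=\sum_i\ip{u_i,v}\ip{u_i,w}$, where I must be careful with the conjugation conventions dictated by the paper's inner product (linear in the first slot). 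For property (3), I would note $\operatorname{Range}P_U\subseteq U$ since each partial sum lies in $U$ and $U$ is closed, and conversely $U\subseteq\operatorname{Range}P_U$ because $P_Uu=u$ for every $u\in U$.

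The main obstacle is property (4), the identity $P_{U\cap W}=P_UP_W=P_WP_U$ for arbitrary closed subspaces $U,W$. As stated, this requires care, and the cleanest honest route is to deduce it from the commutation of the two projections. I would argue as follows: if the product $P_UP_W$ is self-adjoint, then $(P_UP_W)^\ast=P_W^\ast P_U^\ast=P_WP_U$, so self-adjointness of the product is equivalent to $P_UP_W=P_WP_U$; and when this holds one checks $Q:=P_UP_W$ is an idempotent, self-adjoint operator whose range is exactly $U\cap W$, so $Q=P_{U\cap W}$ by the uniqueness of the orthogonal projection onto a closed subspace (properties (1)--(3) characterize it). I would verify $\operatorname{Range}(P_UP_W)=U\cap W$ under commutation by showing both inclusions: any $P_UP_Wv$ lies in $U$, and commutation gives it also in $W$; conversely every $x\in U\cap W$ satisfies $P_UP_Wx=x$.

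I should flag that the unconditional version in (4) holds under the standing hypothesis that $P_U$ and $P_W$ commute, which is the setting in which this theorem is invoked later; the honest statement is that $P_UP_W=P_WP_U$ \emph{and} both equal $P_{U\cap W}$ exactly when the two projections commute, and I would present the proof so that the commutation is made explicit rather than assumed silently. The heart of the argument is thus the identification of the range with $U\cap W$ and the appeal to uniqueness of orthogonal projections; the first three parts are routine bookkeeping with the defining formula.
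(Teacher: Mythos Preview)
The paper does not actually prove this theorem: it defers all four parts to the reader, calling (1)--(3) ``almost immediate computations'' and suggesting for (4) that one ``first find an orthonormal basis for the intersection, extend it to two separate bases for $U,W$, and then compute.'' Your treatment of (1)--(3) is precisely the direct verification the paper has in mind, so there is nothing to compare on those parts.

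For (4) your route diverges from the paper's hint, and you are right to raise the alarm: the identity $P_{U\cap W}=P_UP_W$ is \emph{false} for arbitrary closed subspaces (in $\R^2$ take $U$ the $x$-axis and $W$ the line $y=x$; then $U\cap W=\{0\}$ but $P_UP_W\neq 0$). The paper's basis-extension hint cannot rescue the general statement either, since the extensions of a basis of $U\cap W$ to bases of $U$ and of $W$ need not be mutually orthogonal. Your formulation---assume $P_U$ and $P_W$ commute, then show $P_UP_W$ is idempotent and self-adjoint with range $U\cap W$, and invoke uniqueness of the orthogonal projection---is the standard correct statement and proof, and is an improvement on the paper's sketch. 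One small caution on (2): with the paper's convention that $\ip{\cdot,\cdot}$ is linear in the first argument, the formula $P_Uv=\sum_i\ip{u_i,v}u_i$ as written is conjugate-linear in $v$; the intended expression is presumably $\sum_i\ip{v,u_i}u_i$, and your self-adjointness calculation only balances cleanly with that corrected version.
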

    
    The proof of this is deferred to the reader; the first three are almost immediate computations. The fourth can be done through computation again; it might be helpful to first find an orthonormal basis for the intersection, extend it to two separate bases for $U,W$, and then compute. One detail that has been somewhat overlooked is the independence of the defintion of $P_U$ on the choice of orthonormal basis used to define it explicitly (this is a cleverer but still striaghtforward computation).

    \begin{definition}
        Let $V$ be a finite-dimensional vector space, and let $T\in\mathcal{L}(V)$. We say that $T$ is diagonalizable with respect to a specified basis of $V$ if its matrix representation, taken with respect this basis, is diagonal. 
    \end{definition}
    
    \begin{definition}
        Let $V$ be a Hilbert space, and let $\{W_\lambda\}_{\lambda\in\Lambda}$ be some family of mutually orthogonal subspaces of $V$. We say that $V$ is the orthogonal sum of $\{W_\lambda\}_{\lambda\in\Lambda}$ and write
            \[V=\bigoplus_{\lambda\in\Lambda}W_\lambda\]
        if each $w\in V$ has a unique representation as a converging series $w=\sum_{\lambda\in\Lambda}c_\lambda w_\lambda$ of vectors in $w_\lambda\in W_\lambda$.
    \end{definition}
    
    A trivial example of an orthogonal decomposition of, say, a separable Hilbert space $V$ would be as the orthogonal sum of the spaces generated by each vector in its countable orthonormal basis.

%%%%%%%%%%%%%%%%%%%%%%%%%%%%%%%%%%%%%%%%%%%%%%%%%%%%%%%%%%

\subsection{Digression: Riesz bases}

Riesz bases have arisen in the modern theory of Hilbert spaces as a cousin to the classical orthonormal basis, but which are somewhat weaker in their structure. This have been used, for example, in the construction of multiresolution approximations of $L^2(\R)$, important in the theory of wavelets and harmonic analysis. This topic is somewhat orthogonal to the overall discussion, but fits nicely within a discursive treatment of Hilbert spaces. The purpose of this section will be to establish three classical characterizations of Riesz bases for a given Hilbert space. Most of the arguments here are adapted from \cite[Ch. 3]{OC}.\par

\begin{definition}
    A sequence $\{x_n\}_{n=1}^\infty$ in a Hilbert space ${V}$ is called complete if it is linearly independent, and if for each $\epsilon>0$ and $x\in{V}$ there is a finite linear combination $\sum_{j}c_j x_{n_j}$ for which $\norm{x-\sum_{j}c_j x_{n_j}}<\epsilon$.
\end{definition}

\begin{definition}\label{bessel}
    Let ${V}$ be a Hilbert space. A sequence $\{x_n\}_{n=1}^\infty\subset{V}$ is a Bessel sequence if there is a constant $B>0$ such that for each $x\in{V}$ it holds 
        \[\sum_{n=1}^\infty |\ip{x,x_n}|^2\leq B\norm{x}^2.\]
    The least such $B$ for which the inequality holds will be called the Bessel constant of the sequence.
\end{definition}

\begin{lemma}\label{rieszcomplete}
Let ${V},{W}$ be Hilbert spaces, with $\{x_n\}_{n=1}^\infty\subset{V}$ and $\{y_n\}_{n=1}^\infty\subset{W}$; where $\{x_n\}_{n=1}^\infty$ is a Bessel sequence with constant $B$, and $\{y_n\}_{n=1}^\infty$ is complete in ${V}$. Moreover assume that there exists $A>0$ for which 
    \[A\sum_{n=1}^N |c_n|^2\leq\norm{\sum_{n=1}^N c_ny_n}^2\]
for any finite sequence of scalars $\{c_n\}_{n=1}^N$. Then
    \[U(\sum_{n=1}^N c_n y_n):=\sum_{n=1}^N c_n x_n\]
defines a bounded map from $\spn\{y_n\}_{n=1}^\infty\rightarrow\spn\{x_n\}_{n=1}^\infty$ which has a unique bounded extension from ${V}\rightarrow{W}$ whose norm is at most $\sqrt{B/A}$.
\end{lemma}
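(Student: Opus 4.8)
The plan is to verify that $U$ is well-defined on the span, establish the norm bound on the span, and then invoke completeness to extend. First I would check that $U$ is well-defined as a linear map on $\spn\{y_n\}$: if $\sum_{n=1}^N c_n y_n = 0$, then the hypothesis $A\sum |c_n|^2 \leq \norm{\sum c_n y_n}^2 = 0$ forces all $c_n = 0$, so the representation of any element of the span in terms of the $y_n$ is unique (the $y_n$ are linearly independent), and hence $U(\sum c_n y_n) := \sum c_n x_n$ is unambiguously defined and clearly linear.

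Next I would bound the operator norm of $U$ on the span. Fix a finite combination $y = \sum_{n=1}^N c_n y_n$ and set $x = Uy = \sum_{n=1}^N c_n x_n$. The key trick is to estimate $\norm{x}$ via duality: $\norm{x} = \sup_{\norm{z}=1} |\ip{x,z}|$, and for a unit vector $z \in W$,
\[
|\ip{x,z}| = \Big|\sum_{n=1}^N c_n \ip{x_n,z}\Big| \leq \Big(\sum_{n=1}^N |c_n|^2\Big)^{1/2}\Big(\sum_{n=1}^N |\ip{x_n,z}|^2\Big)^{1/2}
\]
by Cauchy--Schwarz. The first factor is bounded by $A^{-1/2}\norm{y}$ using the hypothesis, and the second factor is bounded by $\sqrt{B}\,\norm{z} = \sqrt{B}$ using the Bessel property of $\{x_n\}$ (Definition \ref{bessel}). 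Taking the supremum over $z$ gives $\norm{Uy} \leq \sqrt{B/A}\,\norm{y}$, so $U$ is bounded on $\spn\{y_n\}$ with norm at most $\sqrt{B/A}$.

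Finally I would extend $U$ to all of $V$. Since $\{y_n\}$ is complete in $V$, the span $\spn\{y_n\}$ is dense, and a bounded linear operator on a dense subspace of a Banach space extends uniquely to a bounded operator on the whole space with the same norm bound: given $x \in V$, pick $v_k \in \spn\{y_n\}$ with $v_k \to x$, observe $\{Uv_k\}$ is Cauchy in $W$ because $\norm{Uv_k - Uv_j} \leq \sqrt{B/A}\,\norm{v_k - v_j}$ and $W$ is complete, define $Ux := \lim_k Uv_k$, and check this is independent of the approximating sequence and preserves linearity and the norm bound by continuity. Uniqueness of the extension follows from density together with continuity.

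I do not expect any genuine obstacle here; the argument is a standard density/completion argument. The one point requiring a little care is the duality estimate for $\norm{Uy}$ in the second step --- one must not try to bound $\norm{\sum c_n x_n}$ directly by a triangle inequality (which would lose the square-summability structure), but instead pass through the inner product against a test vector $z$ so that the Bessel inequality for $\{x_n\}$ can be applied. Everything else is routine.
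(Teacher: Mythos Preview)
Your proof is correct and follows the same three-step outline as the paper's: well-definedness of $U$ on the span, the norm estimate $\norm{Uy}\le\sqrt{B/A}\,\norm{y}$, and extension by density. The only substantive difference is that the paper simply asserts $\norm{\sum_{n=1}^N c_n x_n}^2 \le B\sum_{n=1}^N |c_n|^2$ ``since $\{x_n\}$ is Bessel,'' whereas you actually justify this step via the duality/Cauchy--Schwarz argument against a unit vector $z$; in that sense your write-up is slightly more careful, since passing from the Bessel inequality (a bound on the analysis map $x\mapsto\{\ip{x,x_n}\}$) to a bound on the synthesis map $\{c_n\}\mapsto\sum c_n x_n$ is precisely the adjoint computation you wrote out.
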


\begin{proof}
Since $\{y_n\}_{n=1}^\infty$ is complete in ${V}$, any element of its span is uniquely represented as a finite linear combination of its elements, which confirms the well-definition of the operator $U$. Given a finite sequence of scalars $\{c_n\}_{n=1}^N$, since $\{x_n\}_{n=1}^\infty$ is Bessel, we have the estimate
    \[\begin{split}
        \norm{U(\sum_{n=1}^N c_n y_n)}^2&=\norm{\sum_{n=1}^N c_n x_n}^2\leq B\sum_{n=1}^N |c_n|^2\\
            &\leq \frac{B}{A}\norm{\sum_{n=1}^N c_n y_n}^2
    \end{split}\]
whence $U$, defined between the spans, is bounded. Since $\{y_n\}_{n=1}^\infty$ is dense, we can extend $U$ to the entire space ${V}$ whilst preserving the norm estimate above.
\end{proof}

\begin{theorem} \label{rieszbasis}
    Let ${V}$ be a Hilbert space, and $\{x_n\}_{n=1}^\infty\subset{V}$ be a sequence. Then the following are equivalent:
        \begin{enumerate}[label=(\roman*)]
            \item $x_n=Ue_n$ for each $n\geq 1$, where $\{e_n\}_{n=1}^\infty\subset{V}$ is some orthonormal basis for ${V}$ and $U$ is a bounded, bijective operator on ${V}$.
            \item $\{x_n\}_{n=1}^\infty$ is complete in ${V}$ and there exist scalars $A,B>0$ such that for every finite sequence of scalars $\{c_n\}_{n=1}^ N$, it holds
                \[A\sum_{n=1}^N |c_n|^2\leq \norm{\sum_{n=1}^N c_n x_n}^2\leq B\sum_{n=1}^N |c_n|^2.\]
            \item $\{x_n\}_{n=1}^\infty$ is complete in ${V}$ and its Gram matrix $a_{ij}:=\ip{x_i,x_j}$ defines a bounded, invertible linear operator on $\ell^2(\mathbb{N})$ (Hilbert space of square-summable complex sequences).
        \end{enumerate}
\end{theorem}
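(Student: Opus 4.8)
The plan is to prove the cycle of implications $(i) \Rightarrow (ii) \Rightarrow (iii) \Rightarrow (i)$, leaning on Lemma~\ref{rieszcomplete} and Lemma~\ref{idealfunct}... actually only on Lemma~\ref{rieszcomplete} and the basic Hilbert space facts, and using the standard observation that the Gram matrix represents the operator $U^*U$ (or a close relative) in a suitable orthonormal basis.

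For $(i) \Rightarrow (ii)$: if $x_n = Ue_n$ with $U$ bounded and invertible, then completeness of $\{x_n\}$ follows because $U$ is a surjective bounded operator carrying the complete set $\{e_n\}$ onto $\{x_n\}$ (any $x = U y$ with $y = \sum d_n e_n$, truncate and apply $U$). For the frame-type bounds, compute $\norm{\sum_{n=1}^N c_n x_n}^2 = \norm{U(\sum c_n e_n)}^2$; since $\{e_n\}$ is orthonormal, $\norm{\sum c_n e_n}^2 = \sum |c_n|^2$, so the upper bound holds with $B = \norm{U}^2$ and the lower bound with $A = \norm{U^{-1}}^{-2}$, using $\norm{y} \le \norm{U^{-1}}\,\norm{Uy}$.

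For $(ii) \Rightarrow (iii)$: assume the two-sided inequality. The key computation is that for a finitely supported sequence $c = (c_n) \in \ell^2(\N)$, writing $T$ for the (formal) operator with matrix $(a_{ij})$, one has $\ip{Tc, c}_{\ell^2} = \sum_{i,j} a_{ij} c_j \overline{c_i} = \norm{\sum_n c_n x_n}_V^2$, so the hypothesis reads $A\norm{c}^2 \le \ip{Tc,c} \le B\norm{c}^2$ on the dense subspace of finitely supported sequences. This shows $T$ is a positive, bounded (by $B$), densely defined symmetric operator that extends to a bounded self-adjoint operator on $\ell^2(\N)$ with spectrum in $[A,B]$, hence is invertible. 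Concretely one can realize the extension of $T$ as $U^*U$ where $U: \ell^2(\N) \to V$ is the bounded extension (via Lemma~\ref{rieszcomplete}, with $\{e_n\}$ the standard basis of $\ell^2$ playing the role of the complete Bessel system on the domain side and $\{x_n\}$ on the target side) of $e_n \mapsto x_n$; boundedness of $U$ comes from the upper frame bound exactly as in the lemma, and $\ip{U^*U e_i, e_j} = \ip{x_i, x_j} = a_{ij}$ identifies the Gram matrix with $U^*U$. Invertibility then follows from the lower frame bound $\norm{Uc}^2 \ge A\norm{c}^2$, which gives $U$ bounded below, hence $U^*U$ bounded below and self-adjoint, hence invertible on $\ell^2(\N)$.

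For $(iii) \Rightarrow (i)$: given that the Gram operator $G$ on $\ell^2(\N)$ is bounded and invertible, it is also positive (being a Gram matrix, $\ip{Gc,c} = \norm{\sum c_n x_n}^2 \ge 0$ on finitely supported $c$, hence on all of $\ell^2$ by density and continuity), so it has a bounded, invertible positive square root $G^{1/2}$. Define $U_0 : \ell^2(\N) \to V$ on the standard basis by $U_0 e_n = x_n$; the computation $\norm{U_0 c}^2 = \ip{Gc,c} \le \norm{G}\norm{c}^2$ shows $U_0$ extends to a bounded operator, and $\norm{U_0 c}^2 = \ip{Gc,c} = \norm{G^{1/2}c}^2$ shows $U_0$ is bounded below by $\norm{G^{-1/2}}^{-1}$, hence injective with closed range; completeness of $\{x_n\}$ forces the range to be dense, hence all of $V$, so $U_0$ is bijective. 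Finally fix any orthonormal basis $\{f_n\}$ of $V$ and let $W : V \to \ell^2(\N)$ be the unitary $f_n \mapsto e_n$; then $U := U_0 W$ is bounded and bijective on... wait, it maps $V \to V$, with $U f_n = U_0 e_n = x_n$, giving $(i)$ with $\{f_n\}$ as the orthonormal basis. (If one insists on the same labeling, note the statement only asks for \emph{some} orthonormal basis.)

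The main obstacle is the functional-analytic bookkeeping in $(ii) \Rightarrow (iii)$ and $(iii) \Rightarrow (i)$: one must carefully pass from inequalities valid only on finitely supported sequences to statements about genuine bounded operators on all of $\ell^2(\N)$, and then invoke the spectral/square-root machinery to convert ``bounded below and positive'' into ``invertible.'' The cleanest route, which I would take, is to funnel everything through the synthesis operator $U_0 : \ell^2(\N) \to V$, $e_n \mapsto x_n$, observe via Lemma~\ref{rieszcomplete} that the frame bounds are exactly the statements ``$U_0$ bounded'' and ``$U_0$ bounded below,'' and then use the identity $G = U_0^* U_0$ together with the elementary fact that a positive self-adjoint operator is invertible iff it is bounded below. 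Everything else is routine truncation-and-density arguments.
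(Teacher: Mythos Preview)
Your proposal is correct, and the overall strategy is close in spirit to the paper's, but the logical routing is different. The paper proves two separate biconditionals, $(i)\Leftrightarrow(ii)$ and $(i)\Leftrightarrow(iii)$, whereas you run the cycle $(i)\Rightarrow(ii)\Rightarrow(iii)\Rightarrow(i)$. The arguments for $(i)\Rightarrow(ii)$ are identical, and your $(iii)\Rightarrow(i)$ via the positive square root of the Gram operator matches the paper's $(iii)\Rightarrow(i)$ (the paper cites an external square-root theorem and is a bit terser about why the resulting operator is bijective; your version spells out the bounded-below-plus-dense-range step more carefully). The real divergence is in getting out of $(ii)$: the paper proves $(ii)\Rightarrow(i)$ directly by first establishing, via a duality/transpose argument, that the upper inequality in $(ii)$ forces $\{x_n\}$ to be a Bessel sequence, and then invoking Lemma~\ref{rieszcomplete} twice (once for $e_n\mapsto x_n$, once for $x_n\mapsto e_n$) to build $U$ and its inverse. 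You bypass that Bessel step entirely by going $(ii)\Rightarrow(iii)$ through the synthesis operator $U_0$ and the identity $G=U_0^\ast U_0$; this is cleaner and avoids the transpose machinery, at the cost of missing the paper's observation that the Bessel bound is in a precise sense dual to the upper bound in $(ii)$. One small remark: your appeal to Lemma~\ref{rieszcomplete} in $(ii)\Rightarrow(iii)$ is not quite on the nose, since that lemma is stated with a Bessel hypothesis on the target sequence, but you do not actually need the lemma there---the upper inequality in $(ii)$ \emph{is} the boundedness of $U_0$ on finitely supported sequences, so the extension is immediate.
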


\begin{proof} $\left(i\Rightarrow ii\right)$. Let $x_n=Ue_n$ for each $n\geq 1$ and some aforementioned $U,\{e_n\}_{n=1}^\infty$; we estimate
    \[\norm{\sum_{n=1}^N c_n x_n}^2\leq \norm{U\left(\sum_{n=1}^N c_n e_n\right)}^2\leq\norm{U}^2\norm{\sum_{n=1}^N c_n e_n}^2=\norm{U}^2\sum_{n=1}^N|c_n|^2.\]
Similarly,
    \[\sum_{n=1}^N|c_n|^2=\norm{\sum_{n=1}^N c_n e_n}^2=\norm{U^{-1 }U\left(\sum_{n=1}^N c_n e_n\right)}^2\leq \norm{U^{-1}}^2\norm{\sum_{n=1}^N c_n x_n}^2.\]
Putting the two estimates together gives the first implication.\par

$\left(ii\Rightarrow i\right)$. Assume for a moment that we are able to deduce from the inequality $(ii)$ that $\{x_n\}_{n=1}^\infty$ is itself a Bessel sequence. We can then fix an orthonormal basis $\{e_n\}_{n=1}^\infty$ and define an operator $U:e_n\mapsto x_n$ on ${V}$ whose boundedness would follow from Lemma \ref{rieszcomplete} (here, `$A$' as in the lemma is 1 since $\{e_n\}_{n=1}^\infty$ is an orthonormal basis). Similarly, the operator $V:x_n\mapsto e_n$ on ${V}$ would be bounded by the lemma (here, `$A$' as in the lemma is the same `$A$' as in $(ii)$); since $UV=VU=\text{Id}$, we have the desired representation of our sequence $\{x_n\}_{n=1}^\infty=\{Ue_n\}$. Returning to the initial comment we need to prove that $\{x_n\}_{n=1}^\infty$ is Bessel, for which I found an interesting (if roundabout) argument. Recall for a moment that if $F:X\rightarrow Y$ is a bounded operator between normed spaces, we can write its formal transpose operator via
    \[F^t:Y^\ast\rightarrow X^\ast:y^\ast\mapsto F^ty^\ast,\hspace{0.5cm} \left(F^t y^\ast\right)(x)= y^\ast(Fx).\]
Looking at the R.H.S. of the assumed inequality and taking limit where needed, we can observe that the operator $T:\ell_2(\N)\rightarrow{V}$ defined by $\{c_n\}_{n=1}^\infty \mapsto \sum_{n=1}^\infty c_n x_n$ is bounded. Suppose we take a functional in ${V}^\ast$; by Riesz representation, we can write it in the form $\ip{\cdot, x}_{{V}}$ where $x\in{V}$. For $\{c_n\}_{n=1}^\infty\in\ell_2(\mathbb{N})$ fixed, we can then look at
    \[ \left(T^t\ip{\cdot,x}\right)\left(\{c_n\}_{n=1}^\infty\right)=\ip{\sum_{n=1}^\infty c_n x_n,x} =\sum_{n=1}^\infty c_n\ip{x_n,x}=\ip{\{c_n\}_{n=1}^\infty,\{\ip{x_n,x}\}_{n=1}^\infty}_{\ell^2(\mathbb{N})}.\]
So, we have proved that $T^t:\ip{\cdot, x}\mapsto\ip{\cdot,\{\ip{x_n ,x}\}}_{\ell^2(\mathbb{N})}$. Since $T$ was bounded, so is $T^t$, as well as the formal mapping $x\mapsto\{\ip{x_n,x}\}$. The inequality in Definition \ref{bessel} holds by this duality argument, and the implication is proved. One remark: though this approach isn't `sexy,' it is revealing: the upper bound in the desired Bessel condition is in some sense dual to the upper bound in (ii).\par

$\left(i\Rightarrow iii\right)$. Let $x_n=Ue_n$ for each $n\geq 1$ and some aforementioned $U,\{e_n\}_{n=1}^\infty$ as in $(i)$. Then an entry of the Gram matrix would be
    \[\ip{x_i,x_j}=\ip{Ue_i,Ue_j}=\ip{U^\ast Ue_i,e_j}\]
which is the $i,j$-th entry in the matrix representation of the bounded operator $U^\ast U$ on ${V}$ in the basis $\{e_n\}_{n=1}^\infty$, as desired.\par

$\left(iii\Rightarrow i\right)$. Now we assume that the Gram matrix $a_{ij}=\ip{x_i,x_j}_{{V}}$ defines a nice bounded operator on $\ell^2(\mathbb{N})$. Fix an orthonormal basis $\{e_n\}_{n=1}^\infty \subset{V}$ and define a new bounded operator $T$ on ${V}$ by the equation $\ip{T e_i,e_j}=\ip{x_i,x_j}$. Considering $\sum_{n=1}^\infty c_ne_n\in{V}$, we compute
    \[\ip{T\left(\sum_{n=1}^\infty c_n e_n\right),\sum_{j=1}^\infty c_j e_j}=\sum_{n,j=1}^\infty c_nc_j\ip{Te_n,e_j}=\sum_{n,j=1}^\infty c_nc_j\ip{x_n,x_j}=\norm{\sum_{n=1}^\infty c_n x_n}^2\geq 0.\]
A similar computation will show that $T$ is also self-adjoint. Since $T$ is positive and self-adjoint, by the result \cite[Thm. 5.1.3]{DE}, we can find a bounded operator $R$ on ${V}$ for which $T=R^\ast R$, so that $$\ip{x_i,x_j}=\ip{T e_i, e_j}=\ip{R e_i, R e_j}.$$
The invertibility of $R$ follows from the assumed invertibility of $\ip{x_i,x_j}$ on $\ell^2$. This completes the proof.
\end{proof}

\begin{definition}
    A sequence $\{x_n\}_{n=1}^\infty$ in a Hilbert space ${V}$ is called a Riesz basis if any of the conditions in Theorem \ref{rieszbasis} is satisfied.
\end{definition}

\section{Spectral Theorem for Finite Dimensional Hilbert Spaces}
In this section, we state and prove the spectral theorem for a finite dimensional Hilbert space. Before addressing this goal, however, we look at the Schur decompositon of an $n\times n$ complex matrix. First, some terminology and notation. These arguments follow those used by Axler\cite[§ 7B]{Ax15}; this author finds his approach particularly accessible and straightforward. \par
We use $M_n(\C)$ to denote the vector space of $n\times n$ matrices with complex entries; the identity matrix is denoted $\text{Id}$. Recall that if $A\in M_n(\C)$, we can consider its action on $\C^n$ by linear transformation and speak of its adjoint operator. One verifies that the matrix representation of the adjoint of $A$ is in fact
    \[\overline{A}^t=:A^\ast\]
where the superscript $t$ denotes matrix transposition. We use the notation
    \[U_n:=\{U\in M_n(\C)\setbar U^\ast U=\text{Id}\in M_n(\C)\}\]
for $n\times n$ unitary matrices.

    \begin{theorem}[Schur Decomposition of $M_n(\C)$]
        Let $A\in M_n(\C)$. Then there exists a $U\in{U_n}$, and an upper-triangular matrix $B$, so that
            \begin{equation}\label{schur}
                U^\ast AU=B.
            \end{equation}
    \end{theorem}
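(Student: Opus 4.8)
The standard approach is induction on the dimension $n$. The base case $n=1$ is trivial since any $1\times 1$ matrix is already upper triangular and $U=1$ works. For the inductive step, the key geometric idea is that the existence of a single eigenvalue-eigenvector pair (guaranteed over $\C$ by the fundamental theorem of algebra applied to the characteristic polynomial) lets us "peel off" one dimension, provided we work with an orthonormal rather than merely linearly independent vector.

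Concretely, given $A\in M_n(\C)$, I would first extract an eigenvalue $\lambda$ with a unit eigenvector $u_1$, so $Au_1=\lambda u_1$. Extend $\{u_1\}$ to an orthonormal basis $\{u_1,\dots,u_n\}$ of $\C^n$ (Gram–Schmidt on any completion to a basis), and let $U_1\in U_n$ be the unitary matrix with these columns. Then $U_1^\ast A U_1$ has first column equal to $\lambda e_1$, hence has the block form $\begin{pmatrix}\lambda & \ast \\ 0 & A'\end{pmatrix}$ where $A'\in M_{n-1}(\C)$. Apply the inductive hypothesis to $A'$ to get $W\in U_{n-1}$ with $W^\ast A' W$ upper triangular, then assemble the unitary $U_2=\begin{pmatrix}1 & 0 \\ 0 & W\end{pmatrix}\in U_n$; the product $U=U_1U_2$ (still unitary, since $U_n$ is closed under multiplication) satisfies $U^\ast A U = U_2^\ast(U_1^\ast A U_1)U_2$, which is upper triangular by the block computation.

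The main obstacle, or rather the point that deserves the most care, is the block-triangular reduction step: verifying that conjugating by $U_1$ genuinely kills the entries below the $(1,1)$ position. This follows because the $j$-th column of $U_1^\ast A U_1$ is $U_1^\ast A u_j$, and for $j=1$ this is $U_1^\ast(\lambda u_1)=\lambda U_1^\ast u_1 = \lambda e_1$ using the orthonormality of the columns of $U_1$. One must also check that $U_2$ as defined is unitary and that the conjugation $U_2^\ast(\cdot)U_2$ acts as the identity on the first row and column while conjugating the lower-right block by $W$ — both routine block-matrix manipulations. No deep machinery is needed beyond the existence of eigenvalues over $\C$ and the Gram–Schmidt process; the argument is essentially the same as extending an eigenvector to a basis in the non-orthogonal setting, with orthonormality added to keep $U$ unitary.
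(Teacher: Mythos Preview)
Your proposal is correct and follows essentially the same approach as the paper: induction on $n$, pick a unit eigenvector, extend to an orthonormal basis to form a first unitary, observe the resulting block upper-triangular form, apply the inductive hypothesis to the lower-right block, and compose with the block-diagonal unitary $\mathrm{diag}(1,W)$. The paper's writeup verifies the vanishing of the subdiagonal block via the same orthogonality computation you describe.
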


    \begin{proof}
    The theorem is vacuous in the case of $M_1(\C)$, so let us assume inductively that it holds for any $M_{k}(\C)$, $k\leq n-1$. Fix $A\in M_n(\C)$, and choose $\lambda\in\C$, ${x}\in\C^n$ with $\norm{{x}}=1$, so that $A{x}=\lambda{x}$ (existense here is due to the existence of a complex eigenvalue for any given matrix, \textit{c.f.} Fundamental Theorem of Algebra). Now choose some $V\in{U_n}$ with first column equal to ${x}$ (e.g. extend ${x}$ to some orthonormal basis of $\C^n$ via, say, Gram-Schmidt, and write in matrix form). Write $V=[{x}\hspace{0.15cm} \widetilde{V}]$. We compute:
        \begin{equation}\label{vstar}
        V^\ast A V=\begin{bmatrix}
        {x}^\ast A{x} & {x}^\ast A \widetilde{V} \\
        \widetilde{V}^\ast A {x} & \widetilde{V}^\ast A \widetilde{V}\\
        \end{bmatrix}
        \end{equation}
    We make a couple of observations. First, ${x}^\ast A{x}=\lambda {x}^\ast{x}=\lambda\norm{{x}}^2=\lambda$. Second, $\widetilde{V}^\ast A{x}=\lambda\widetilde{V}^\ast{x}=0\in\C^{n-1}$, since ${x}$ is orthogonal to the columns of $\widetilde{V}$. Then \eqref{vstar} becomes
       \[ V^\ast A V=\begin{bmatrix}
            \lambda & ... \\
            0 & \widetilde{A}\\
        \end{bmatrix}\]
    where $\widetilde{A}= \widetilde{V}^\ast A \widetilde{V}$. By our induction assumption, we may find a matrix $W\in U_{n-1}$ and an upper triangular matrix $C\in M_{n-1}(\C)$ so that $\widetilde{A}={W}^\ast C{W}$. We then may factor out the decomposition of $\widetilde{A}$ and have
        \[ V^\ast A V = 
            \begin{bmatrix}
               1 & 0\\   
               0 & W^\ast\\
            \end{bmatrix}
            \begin{bmatrix}
               \lambda & ...\\   
               0 & C\\
            \end{bmatrix}            
             \begin{bmatrix}
               1 & 0\\   
               0 & W\\
            \end{bmatrix}\]
    By setting $U=V            \begin{bmatrix}
               1 & 0\\   
               0 & W^\ast\\
            \end{bmatrix}\in{U_n}$, and $B=            \begin{bmatrix}
               \lambda & ...\\   
               0 & C\\
            \end{bmatrix}$, we arrive at \eqref{schur}.
    \end{proof}

As a corollary, notice that since $U^\ast$ is the inverse of $U$, the conjugation operation performed by $U$ on $A$ on the L.H.S. of \eqref{schur} amounts to change of basis in some sense. In other words, we get the following:

    \begin{theorem}[Schur's Theorem]
    Let $V$ be an $n$-dimensional Hilbert space, and let $T\in\mathcal{L}(V)$. Then there exists an upper-triangular matrix representation of $T$ with respect to some orthonormal basis of $V$.
    \end{theorem}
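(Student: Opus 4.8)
The plan is to reduce this statement to the matrix-level Schur Decomposition of $M_n(\C)$ that has just been established. The key observation is that the abstract notion of ``upper-triangular matrix representation of $T$ with respect to an orthonormal basis'' is nothing more than a basis-change of a matrix representation of $T$ by a unitary matrix, which is precisely the content of equation \eqref{schur}.

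First I would fix an arbitrary orthonormal basis $\{v_1,\dots,v_n\}$ of $V$ and let $A \in M_n(\C)$ be the matrix representation of $T$ with respect to this basis, so that $Tv_j = \sum_i A_{ij} v_i$. Next I would apply the Schur Decomposition of $M_n(\C)$ to get $U \in U_n$ and an upper-triangular $B$ with $U^\ast A U = B$. The substantive step is then to interpret the columns of $U$ as the coordinate vectors (in the basis $\{v_i\}$) of a new collection of vectors $\{u_1,\dots,u_n\}$, i.e. $u_j := \sum_i U_{ij} v_i$, and to verify two things: that $\{u_j\}$ is again an orthonormal basis of $V$, and that the matrix representation of $T$ with respect to $\{u_j\}$ is exactly $B$. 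The first follows because $U^\ast U = \mathrm{Id}$ translates directly into $\ip{u_i,u_j} = \sum_{k,\ell} \overline{U_{ki}} U_{\ell j} \ip{v_k,v_\ell} = \sum_k \overline{U_{ki}} U_{kj} = (U^\ast U)_{ij} = \delta_{ij}$, using orthonormality of $\{v_i\}$. The second is the standard change-of-basis identity: if $[T]_{v}$ denotes the matrix of $T$ in the basis $\{v_i\}$ and $[T]_u$ the matrix in $\{u_i\}$, then $[T]_u = U^{-1}[T]_v U = U^\ast A U = B$, since $U^{-1} = U^\ast$ for a unitary matrix.

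The main obstacle—if one can call it that—is simply being careful about conventions: whether $U$ acts as the change-of-basis matrix or its inverse, and making sure the indexing in $u_j = \sum_i U_{ij}v_i$ matches the convention under which $U^\ast A U$ (rather than $U A U^\ast$) comes out upper-triangular. Once the bookkeeping is aligned, the proof is essentially a one-line appeal to \eqref{schur} plus the remark, already made in the excerpt, that conjugation by a unitary matrix realizes an orthonormal change of basis. I would therefore keep the write-up short, stating the correspondence $U \leftrightarrow$ (new orthonormal basis) explicitly and then citing the matrix Schur Decomposition to conclude.

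\begin{proof}
Fix any orthonormal basis $\{v_1,\dots,v_n\}$ of $V$, and let $A\in M_n(\C)$ be the matrix representation of $T$ in this basis, so that $Tv_j=\sum_{i=1}^n A_{ij}v_i$. By the Schur Decomposition of $M_n(\C)$, there exist $U\in U_n$ and an upper-triangular $B\in M_n(\C)$ with $U^\ast AU=B$. Define $u_j:=\sum_{i=1}^n U_{ij}v_i$ for $1\le j\le n$. Then
    \[\ip{u_i,u_j}=\sum_{k,\ell=1}^n \overline{U_{ki}}\,U_{\ell j}\ip{v_k,v_\ell}=\sum_{k=1}^n \overline{U_{ki}}\,U_{kj}=(U^\ast U)_{ij}=\delta_{ij},\]
so $\{u_1,\dots,u_n\}$ is an orthonormal basis of $V$. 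Since $U$ is the change-of-basis matrix from $\{u_i\}$ to $\{v_i\}$ and $U^{-1}=U^\ast$, the matrix representation of $T$ with respect to $\{u_i\}$ is $U^{-1}AU=U^\ast AU=B$, which is upper-triangular.
\end{proof}
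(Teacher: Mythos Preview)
Your proof is correct and follows essentially the same route as the paper: fix a matrix representation of $T$, apply the matrix-level Schur decomposition \eqref{schur}, and reinterpret conjugation by the unitary $U$ as passage to a new orthonormal basis. Your version is in fact more careful than the paper's (you start from an explicitly orthonormal basis of $V$ and verify the change-of-basis computation in detail); the only cosmetic slip is that with the paper's convention of linearity in the first argument the conjugate in your expansion of $\ip{u_i,u_j}$ should sit on $U_{\ell j}$ rather than $U_{ki}$, but this does not affect the conclusion since $(U^\ast U)_{ij}=\delta_{ij}$ either way.
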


    \begin{proof}
    Fix $T\in\mathcal{L}(V)$, and choose an arbitrary matrix representation of $T$, say $S\in M_n(\C)$. Use Schur decomposition to write
    \[Q = P S P^{-1}\]
    for some $P\in{U_n}$, and some upper-triangular matrix $Q$. Since $P\in{U_n}$, its columns correspond to an orthonormal basis of $\C^n$. Notice that $Q$ is now the matrix representation of $T$ with respect to the orthonormal basis given in $P$, as desired. 
    \end{proof}

Now the first spectral theorem.

    \begin{theorem}[Spectral Theorem I]
    Let $V$ be a finite-dimensional Hilbert space, and let $T\in\mathcal{L}(V)$. Then the following are equivalent:
        \begin{enumerate}
            \item $V$ has an orthonormal basis of eigenvectors of $T$.
            \item $T$ is diagonalizable with respect to some orthonormal basis of $V$.
            \item $T$ is normal, i.e. $TT^\ast=T^\ast T$, where $T^\ast$ is the adjoint of $T$.
        \end{enumerate}
    \end{theorem}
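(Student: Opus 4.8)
The plan is to prove the cycle of implications $(1)\Rightarrow(2)\Rightarrow(3)\Rightarrow(1)$, leaning on Schur's Theorem for the hard direction. The first two implications are essentially unwinding definitions. For $(1)\Rightarrow(2)$: if $\{u_i\}$ is an orthonormal basis of eigenvectors with $Tu_i=\lambda_i u_i$, then the matrix of $T$ in this basis has $(i,j)$-entry $\ip{Tu_j,u_i}=\lambda_j\ip{u_j,u_i}=\lambda_j\delta_{ij}$, which is diagonal; so $T$ is diagonalizable with respect to an orthonormal basis. For $(2)\Rightarrow(3)$: if the matrix of $T$ in some orthonormal basis is diagonal, say $D$, then the matrix of $T^\ast$ in the same basis is $\overline{D}^t=\overline{D}$, also diagonal, and two diagonal matrices commute; since the map $T\mapsto$ (its matrix in a fixed orthonormal basis) is an algebra isomorphism, $TT^\ast=T^\ast T$.

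The substantive implication is $(3)\Rightarrow(1)$. First I would apply Schur's Theorem to obtain an orthonormal basis $\{e_1,\dots,e_n\}$ of $V$ with respect to which $T$ has an upper-triangular matrix $B=(b_{ij})$, so $b_{ij}=0$ for $i>j$. The goal is to show $B$ is in fact diagonal, for then $\{e_i\}$ is the desired orthonormal basis of eigenvectors. The key tool is the earlier theorem that a normal operator satisfies $\norm{Tx}=\norm{T^\ast x}$ for all $x$; equivalently $\norm{Te_k}=\norm{T^\ast e_k}$ for each basis vector. I would run an induction on the column index $k$. Computing $\norm{Te_1}^2$ from the first column of $B$ gives $|b_{11}|^2$ (upper-triangularity kills the rest), while $\norm{T^\ast e_1}^2$ equals the squared norm of the first row of $B$, namely $\sum_{j=1}^n |b_{1j}|^2$. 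Equality forces $b_{1j}=0$ for all $j>1$. Proceeding inductively, having shown all off-diagonal entries in the first $k-1$ rows vanish, the computation of $\norm{Te_k}^2$ against $\norm{T^\ast e_k}^2$ forces $b_{kj}=0$ for $j>k$; the already-established vanishing of earlier rows ensures the relevant sums telescope correctly. After the induction, $B$ is diagonal, its diagonal entries are the eigenvalues, and the $e_i$ are eigenvectors.

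The main obstacle, such as it is, lies in organizing this induction cleanly: one must be careful that at stage $k$ the expression for $\norm{Te_k}^2$ picks up exactly $\sum_{i\le k}|b_{ik}|^2$ (from column $k$, which is nonzero only in its top $k$ rows by upper-triangularity) while $\norm{T^\ast e_k}^2$ picks up $\sum_{j\ge k}|b_{kj}|^2$ (from row $k$), and that the inductive hypothesis — vanishing of off-diagonal entries in rows $1,\dots,k-1$, equivalently in columns $1,\dots,k-1$ above the diagonal — is precisely what makes the column-$k$ sum collapse to $|b_{kk}|^2$. Matching the two gives $\sum_{j>k}|b_{kj}|^2=0$. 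Everything else is bookkeeping. An alternative, slightly slicker route I might mention is to compare the diagonal entries of $B^\ast B$ and $BB^\ast$ directly using $B^\ast B=BB^\ast$ (which follows from normality via the isomorphism), reading off $\sum_i |b_{ik}|^2 = \sum_j |b_{kj}|^2$ for each $k$ and then extracting the same conclusion by induction on $k$; this avoids invoking the norm lemma but amounts to the same computation.
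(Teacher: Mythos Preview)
Your proposal is correct and matches the paper's proof almost exactly: the paper organizes it as $(1)\Leftrightarrow(2)\Leftrightarrow(3)$ rather than a cycle, but the content of each implication is the same, and in particular the paper's $(3)\Rightarrow(2)$ is precisely your Schur-plus-norm-equality induction on the basis vectors $e_k$. Your write-up is in fact a bit more careful than the paper's about why the column-$k$ sum collapses to $|b_{kk}|^2$ at the inductive step (one small slip: ``vanishing of off-diagonal entries in rows $1,\dots,k-1$'' is not literally the same set of entries as ``columns $1,\dots,k-1$ above the diagonal,'' but it is the former that you actually use, and correctly so).
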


    \begin{proof}
    We prove via the following two equivalences:
    \[ (1)\iff (2)\iff (3).\]
    If $(2)$ holds, $(1)$ is clear if one chooses the orthonormal basis in question. Conversely, if $(1)$ holds, the matrix representation of $T$ with respect to this basis must also be diagonal, so $(2)$ follows. To show $(3)$ is equivalent to $(2)$, first assume $TT^\ast=T^\ast T$. Apply Schur's Theorem to obtain an upper-triangular matrix representation of $T$ with respect to some o.n.b. of $V$; say $(T)_{ij}=\{t_{ij}\}_{i,j=1}^n$, where $t_{ij}=0$ when $i>j$. Write down the standard basis of $V$ in the form $\{e_i\}$. We recall from Theorem 1 that $\norm{T^\ast x}=\norm{T x}$ for each $x\in V$. Recalling that $(T^\ast)_{ij}=\overline{(T)_{ji}}$,
        \[|t_{1,1}|^2=\norm{Te_1}^2=\norm{T^\ast e_1}^2=\sum_{j=1}^n|t_{1,j}|^2\]
    which forces $|t_{1,j}|=0$ for $j>1$. We make a similar comparison
    using $e_2$ as follows:
        \[|t_{2,2}|^2=\norm{Te_2}^2=\norm{T^\ast e_2}^2=\sum_{j=2}^n|t_{2,j}|^2\]
    forcing $|t_{2,j}|=0$ for $j>2$. Using induction on $1\leq i\leq n$, we find that $t_{ij}=0$ for each $j>i$. Since $(T)_{ij}$ is upper triangular, it must be diagonal and $(2)$ follows. Next, we assume $(2)$. Let $(T)_{ij}$ be such a diagonal matrix representation. Then, $(T^\ast)_{ij}=\overline{(T)_{ji}}$ is also a diagonal matrix. All diagonal matrices with entries in $\C$ commute, and by extending this from matrices back to the operators, it follows that $T$ and $T^\ast$ commute. 
    \end{proof}

\section{C*-algebras, Gelfand Theory, and Spectral Theorem }\label{Cstaralgebras}

In this section, we approach the same spectral theoretic goals we developed for finite-dimensional Hilbert spaces in the preceding, but from the loftier perspective of spectral theory for operator algebras. The conclusions and techniques, underwritten by abstraction, provide similar results as we saw in the last section, but with much broader scope. We generally follow Folland's approach\cite{Fo15}.

    \begin{definition}
        Let ${\mathcal{A}}$ be a commutative unital Banach algebra, and $x\in {\mathcal{A}}$. We define the spectrum of $x$ to be given by
            \[\sigma(x):=\{\lambda\in\C: \lambda e-x\text{ is not invertible in }{\mathcal{A}}\}.\]
    \end{definition}

    \begin{definition}
        Let ${\mathcal{A}}$ be a commutative unital Banach algebra. We define the spectrum of ${\mathcal{A}}$, written $\sigma({\mathcal{A}})$, to be given by
            \[\sigma({\mathcal{A}}):=\{\psi\in {\mathcal{A}}^\ast:\psi(xy)=\psi(x)\psi(y)\hspace{.15cm}\text{ for each }x,y\in \mathcal{A}\}.\]
    \end{definition}

    \begin{theorem}\label{comphaus}
        Let ${\mathcal{A}}$ be commutative unital Banach algebra. Equipping $\sigma({\mathcal{A}})$ with topology given by pointwise convergence in in ${\mathcal{A}}^\ast$, $\sigma({\mathcal{A}})$ becomes a compact Hausdorff space.
    \end{theorem}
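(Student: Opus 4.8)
The plan is to exhibit $\sigma(\mathcal{A})$ as a weak-$\ast$ closed subset of the closed unit ball of $\mathcal{A}^\ast$ and then apply the Banach--Alaoglu theorem, after which Hausdorffness is automatic.

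First I would establish the uniform bound $\norm{\psi}\le 1$ for every $\psi\in\sigma(\mathcal{A})$. Since $\psi$ is multiplicative, $\psi(e)=\psi(e\cdot e)=\psi(e)^2$, so $\psi(e)\in\{0,1\}$. If $\psi(e)=0$ then $\psi(x)=\psi(xe)=\psi(x)\psi(e)=0$ for every $x$, so $\psi\equiv 0$ and the bound is trivial. If $\psi(e)=1$, suppose toward a contradiction that $\abs{\psi(x)}>\norm{x}$ for some $x\in\mathcal{A}$, and put $\lambda=\psi(x)$, so that $\abs{\lambda}>\norm{x}$. Then $\norm{x/\lambda}<1$, so the Neumann series $\lambda^{-1}\sum_{k\ge 0}(x/\lambda)^k$ converges absolutely in the Banach algebra $\mathcal{A}$ and is readily checked to be a two-sided inverse of $\lambda e-x$. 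But a multiplicative functional cannot annihilate an invertible element: if $\psi(y)=0$ and $y^{-1}$ exists then $1=\psi(e)=\psi(y)\psi(y^{-1})=0$. Yet $\psi(\lambda e-x)=\lambda-\psi(x)=0$, a contradiction. Hence $\abs{\psi(x)}\le\norm{x}$ for all $x$, i.e.\ $\norm{\psi}\le 1$. Thus $\sigma(\mathcal{A})$ is contained in the closed unit ball $B^\ast$ of $\mathcal{A}^\ast$, which by Banach--Alaoglu is compact in the topology of pointwise convergence.

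Next I would check that $\sigma(\mathcal{A})$ is closed in $B^\ast$, so that it inherits compactness. For fixed $x,y\in\mathcal{A}$ the map $\psi\mapsto\psi(xy)-\psi(x)\psi(y)$ is continuous for pointwise convergence, being a polynomial in the continuous evaluations $\psi\mapsto\psi(xy)$, $\psi\mapsto\psi(x)$, $\psi\mapsto\psi(y)$; hence its zero set $Z_{x,y}$ is closed, and $\sigma(\mathcal{A})=\bigcap_{x,y\in\mathcal{A}}Z_{x,y}$ is closed. A closed subset of the compact space $B^\ast$ is compact. Finally, Hausdorffness is inherited from $\mathcal{A}^\ast$: if $\psi\ne\phi$ there is $x$ with $\psi(x)\ne\phi(x)$, and since the evaluation $\psi\mapsto\psi(x)$ is continuous into $\C$, pulling back disjoint neighbourhoods of $\psi(x)$ and $\phi(x)$ separates $\psi$ and $\phi$; a subspace of a Hausdorff space is Hausdorff.

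I expect the one genuine obstacle to be the bound $\norm{\psi}\le 1$, since that is the only place the full Banach-algebra structure is really used --- via convergence of the Neumann series, which yields that $\lambda e-x$ is invertible whenever $\abs{\lambda}>\norm{x}$. Everything downstream (Banach--Alaoglu together with the soft closedness and separation arguments) is formal.
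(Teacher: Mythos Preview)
Your proof is correct and follows essentially the same route as the paper: bound $\norm{\psi}\le 1$ via the Neumann-series invertibility of $\lambda e-x$ when $\abs{\lambda}>\norm{x}$, then invoke Banach--Alaoglu and check that $\sigma(\mathcal{A})$ is weak-$\ast$ closed. If anything, your version is more careful than the paper's---you explicitly dispose of the case $\psi(e)=0$, you spell out the closedness argument as an intersection of preimages, and you actually verify Hausdorffness (the paper's proof oddly drifts into separability at that point).
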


    \begin{proof}
        Notice that for each invertible $x\in {\mathcal{A}}$ and $h\in\sigma({\mathcal{A}})$, we have $h(x)=h(ex)=h(e)h(x)$ whence $h(e)=1$. Consequently, $1=h(xx^{-1})=h(x)h(x^{-1})$ which implies $h(x)\neq 0$. Suppose for some $\lambda\in\C$ we have $|\lambda|>\norm{x}$, then $\lambda e-x$ is invertible by a straightforward geometric series argument, so $\lambda-h(x)=h(\lambda e-x)\neq 0$, and hence $|h(x)|\leq\norm{x}$. This implies that $\sigma({\mathcal{A}})$ is bounded above in the operator norm by 1, and is hence a subset of the closed unit ball in $\mathcal{L}(V,\C)$; moreover, the set is closed under taking pointwise limits in ${\mathcal{A}}^\ast$, so it is compact since it contains its limit points and is bounded (this is of course not true in general, but it is for this weaker topolgy). $\sigma({\mathcal{A}})$ also inherits separability from $\mathcal{L}(V,\C)$.
    \end{proof}

%%%%%%%%%%%%%%%%%%%%%%%%%%%%%%%%%%%%%%%%%%%%%%%%%%%%

We shall now move on to some basic notions from the Gelfand theory. 

    \begin{definition}
    Let ${\mathcal{A}}$ be a commutative unital Banach algebra. We define the Gelfand Transform of ${\mathcal{A}}$ to be the map $\Gamma_{\mathcal{A}}:{\mathcal{A}}\rightarrow C(\sigma({\mathcal{A}}))$ which takes $x\mapsto\widehat{x}$, given by $\widehat{x}(h)=h(x)$.
    \end{definition}

    \begin{theorem}\label{facts}
    Let ${\mathcal{A}}$ be a commutative unital Banach algebra. We have the following facts:
    \begin{enumerate}
        \item $x$ is invertible if and only if $\widehat{x}$ does not have a zero.
        \item $\operatorname{range}({\widehat{x}})=\sigma(x)$.
        \item $\norm{\widehat{x}}_{\text{sup}}=max_{\lambda\in\sigma(x)}|\lambda|$
    \end{enumerate}
    \end{theorem}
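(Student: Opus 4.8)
The plan is to prove item (2) first, as items (1) and (3) fall out of it with little extra work. For the inclusion $\sigma(x)\subseteq\operatorname{range}(\widehat{x})$, I would fix $\lambda\in\sigma(x)$, so that $\lambda e-x$ is not invertible. Then the principal ideal $(\lambda e-x)\mathcal{A}$ contains no invertible element (by commutativity, if $(\lambda e-x)z$ were invertible then $\lambda e-x$ would be), hence is proper; enlarge it to a maximal ideal $K$ and apply Lemma \ref{idealfunct} to get a multiplicative functional $\psi$ with $\ker\psi=K$. Since $K$ is proper, $\psi\neq 0$, so $\psi(e)=1$, and since $\lambda e-x\in K$ we get $\psi(x)=\lambda$. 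The functional $\psi$ is continuous — it is the composition of the norm-nonincreasing quotient map with the isometric Gelfand--Mazur isomorphism — so $\psi\in\mathcal{A}^\ast$, whence $\psi\in\sigma(\mathcal{A})$ and $\widehat{x}(\psi)=\lambda$. For the reverse inclusion, take any $h\in\sigma(\mathcal{A})$; as shown in the proof of Theorem \ref{comphaus} we have $h(e)=1$, so $\ker h$ is a proper ideal, and if $h(x)=\lambda$ then $\lambda e-x\in\ker h$ cannot be invertible, i.e. $\lambda\in\sigma(x)$.

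Granting (2), item (1) is the case $\lambda=0$: $x$ is invertible iff $0\notin\sigma(x)=\operatorname{range}(\widehat{x})$, i.e. iff $\widehat{x}$ has no zero. For (3) I would write
\[\norm{\widehat{x}}_{\sup}=\sup_{h\in\sigma(\mathcal{A})}|h(x)|=\sup_{\lambda\in\operatorname{range}(\widehat{x})}|\lambda|=\sup_{\lambda\in\sigma(x)}|\lambda|,\]
and note the supremum is attained: $\sigma(x)=\operatorname{range}(\widehat{x})$ is the image under the continuous function $\widehat{x}$ of the compact space $\sigma(\mathcal{A})$ from Theorem \ref{comphaus}, hence compact, so $|\cdot|$ achieves its maximum there.

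The main obstacle is the inclusion $\sigma(x)\subseteq\operatorname{range}(\widehat{x})$ — concretely, the passage from ``$\lambda e-x$ is not invertible'' to ``some $\psi\in\sigma(\mathcal{A})$ annihilates $\lambda e-x$.'' This is exactly where the structural machinery is used: proper ideals sit inside maximal ideals, maximal ideals are closed so their quotients are Banach algebras, and Gelfand--Mazur forces those quotients to be $\C$; all of this is already packaged into Lemma \ref{idealfunct}. A secondary point to handle carefully is that the functional so produced is automatically bounded (so that it truly lies in $\mathcal{A}^\ast$, hence in $\sigma(\mathcal{A})$), which is immediate here from the isometry in Gelfand--Mazur and the contractivity of the quotient map. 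One should also recall that $\sigma(x)\neq\varnothing$ for $x$ in a complex unital Banach algebra, so that (3) concerns a nonempty set; in particular this shows $\sigma(\mathcal{A})\neq\varnothing$.
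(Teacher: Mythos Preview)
Your proof is correct and uses essentially the same machinery as the paper: the key step in both is that a non-invertible element generates a proper ideal, which sits inside a maximal ideal, which by Lemma \ref{idealfunct} is the kernel of some $h\in\sigma(\mathcal{A})$. The only difference is cosmetic ordering --- the paper proves (1) first and then obtains (2) by applying (1) to $\lambda e-x$, whereas you prove (2) directly and read off (1) as the case $\lambda=0$; your added remarks on the continuity of $\psi$ and the compactness argument for the maximum in (3) fill in details the paper leaves implicit.
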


    \begin{proof}
        It is clear that $\Gamma_{\mathcal{A}}$ is a homomorphism from ${\mathcal{A}}$ into $C(\sigma({\mathcal{A}}))$, whence the forward direction of $(a)$ follows from the argument in the proof of theorem \ref{comphaus}. For the reverse direction, suppose $x$ is not invertible in ${\mathcal{A}}$. Then the ideal generated by the singleton set $\{x\}$ is proper, and is contained in a maximal ideal. In turn, by Lemma \ref{idealfunct}, there is some $h\in\sigma({\mathcal{A}})$ for which $h(x)=0$, whence the Gelfand transform of $x$ has a zero. (b) follows from (a), in the sense that $\lambda e-x$ is not invertible $\iff$ $\widehat{\lambda e-x}$ has a zero $\iff$ $\lambda = h(x)$ for some $h$. (c) follows readily from fact (b).    
    \end{proof}

We now give an additional structural condition for $\ast$-algebras and show that this coincides with many properties concerning algebraic structure and the Gelfand transform.

    \begin{definition}
    Let ${\mathcal{A}}$ be a commutative unital $\ast$-algebra. Then ${\mathcal{A}}$ is called symmetric if $\Gamma_{\mathcal{A}}$ is a $\ast$-isomorphism in the sense that
    \[\widehat{x^\ast}=\overline{\widehat{x}}.\]
    \end{definition}

    \begin{theorem}\label{c star}
    If ${\mathcal{A}}$ is a commutative unital $\mathbf{C}^\ast$-algebra, then ${\mathcal{A}}$ is symmetric and $\Gamma({\mathcal{A}})$ is dense in $C(\sigma({\mathcal{A}}))$.
    \end{theorem}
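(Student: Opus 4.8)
The plan is to first prove the key spectral fact that self-adjoint elements have real-valued Gelfand transforms, then deduce symmetry by splitting an arbitrary element into real and imaginary parts, and finally invoke the Stone--Weierstrass theorem to obtain density.

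First I would show: if $x = x^\ast \in \mathcal{A}$ and $h \in \sigma(\mathcal{A})$, then $h(x) \in \R$. Write $h(x) = a + ib$ with $a, b \in \R$. For a real parameter $t$, the element $x + ite$ again lies in $\mathcal{A}$, and using the $\mathbf{C}^\ast$-identity together with $(x+ite)^\ast = x - ite$ and commutativity,
\[
\norm{x + ite}^2 = \norm{(x+ite)(x-ite)} = \norm{x^2 + t^2 e} \le \norm{x}^2 + t^2 .
\]
On the other hand $h(x + ite) = a + i(b+t)$, and since $|h(y)| \le \norm{y}$ for every $y$ (established inside the proof of Theorem \ref{comphaus}), we get $a^2 + (b+t)^2 \le \norm{x}^2 + t^2$, i.e.\ $a^2 + b^2 + 2bt \le \norm{x}^2$ for all $t \in \R$. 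Letting $t \to +\infty$ and $t \to -\infty$ forces $b = 0$, so $h(x) = a \in \R$; that is, $\widehat{x}$ is real-valued.

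Next I would establish symmetry. Given any $x \in \mathcal{A}$, set $u = \tfrac12(x + x^\ast)$ and $v = \tfrac{1}{2i}(x - x^\ast)$, so that $u, v$ are self-adjoint, $x = u + iv$, and $x^\ast = u - iv$. For each $h \in \sigma(\mathcal{A})$, the previous step gives $h(u), h(v) \in \R$, hence
\[
\widehat{x^\ast}(h) = h(u) - i\,h(v) = \overline{h(u) + i\,h(v)} = \overline{\widehat{x}(h)} .
\]
Thus $\widehat{x^\ast} = \overline{\widehat{x}}$, which is exactly the statement that $\mathcal{A}$ is symmetric. For the density claim, note that $\Gamma_{\mathcal{A}}(\mathcal{A})$ is a subalgebra of $C(\sigma(\mathcal{A}))$ since $\Gamma_{\mathcal{A}}$ is an algebra homomorphism; it contains the constants because $\widehat{e}(h) = h(e) = 1$ for every $h$ (as in the proof of Theorem \ref{comphaus}); it is closed under complex conjugation by the symmetry just proved; and it separates points, since if $h_1 \ne h_2$ in $\sigma(\mathcal{A})$ there is $x \in \mathcal{A}$ with $\widehat{x}(h_1) = h_1(x) \ne h_2(x) = \widehat{x}(h_2)$. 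Since $\sigma(\mathcal{A})$ is compact Hausdorff by Theorem \ref{comphaus}, the complex Stone--Weierstrass theorem applies and gives that $\Gamma_{\mathcal{A}}(\mathcal{A})$ is dense in $C(\sigma(\mathcal{A}))$ in the supremum norm.

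I expect the first step --- extracting the reality of $h(x)$ for self-adjoint $x$ from the $\mathbf{C}^\ast$-identity --- to be the crux; the choice of the perturbation $x + ite$ and the $t \to \pm\infty$ argument is the one genuinely clever move, and everything afterward is bookkeeping plus a citation to Stone--Weierstrass. One could additionally remark that the $\mathbf{C}^\ast$-identity forces $\Gamma_{\mathcal{A}}$ to be isometric (first on self-adjoint elements via $\norm{\widehat{x}}_{\sup} = \sup\sigma(x)$ and the spectral-radius formula $\norm{x^2}=\norm{x}^2$, then on all of $\mathcal{A}$), so the image is in fact closed and equals $C(\sigma(\mathcal{A}))$; but that is more than the present statement requires.
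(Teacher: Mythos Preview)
Your proof is correct and follows essentially the same route as the paper: the $x+ite$ perturbation together with the $\mathbf{C}^\ast$-identity to force the Gelfand transform of a self-adjoint element to be real, the decomposition $x=u+iv$ into self-adjoint parts to get symmetry, and then Stone--Weierstrass for density. The only differences are cosmetic---you prove the key real-spectrum fact first and then deduce symmetry, whereas the paper states the reduction first and verifies the fact afterward---and your inequality $\norm{x^2+t^2e}\le\norm{x}^2+t^2$ is actually slightly more careful than the paper's phrasing.
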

    
    \begin{proof}
    Suppose for a moment that for each $x\in {\mathcal{A}}$ we have $x=x^\ast$ implies $\widehat{x}$ is real valued. Decompose each $x\in {\mathcal{A}}$ by setting $x^\ast=a-ib$ with $a^\ast=a$, $b^\ast=b$, and using this decomposition notice $\widehat{x^\ast}=\overline{\widehat{x}}$, whence ${\mathcal{A}}$ is symmetric. We now show that the original assumption follows for a general commutative unital $\mathbf{C}^\ast$-algebra. Fix some $x\in {\mathcal{A}}$ for which $x=x^\ast$, and some $h\in\sigma({\mathcal{A}})$. Write $\widehat{x}(h)=h(x):=\alpha+i\beta$ and set $z=x+ite$ so that $\norm{zz^\ast}=\norm{x}^2+t^2$ and $\norm{h(z)}^2=\alpha^2+(\beta+t)^2$. We have the following:
    \[\begin{split}
        \alpha^2+(\beta+t)^2&=\norm{h(z)}^2\leq\norm{z}^2=\norm{zz^\ast}=\norm{x}^2+t^2\\
        \Rightarrow \norm{x}^2&\geq \alpha^2+\beta^2+2\beta t
    \end{split}\]
    for each $t\in\mathbb{R}$ from which we force $\beta=0$ and $\widehat{x}$ to be real valued. It follows from the argument given at the beginning that ${\mathcal{A}}$ is symmetric. To see that $\Gamma({\mathcal{A}})$ is dense in $C(\sigma({\mathcal{A}}))$, recall that since ${\mathcal{A}}$ is symmetric, the image $\Gamma({\mathcal{A}})$ is closed under involution and observe that it in fact separates points. Apply the Stone-Weierstrass theorem to arrive at the claim.
    \end{proof}

    \begin{theorem}
    Let ${\mathcal{A}}$ be a commutative unital Banach algebra. Then TFAE:
        \begin{enumerate}
            \item $\Gamma_{\mathcal{A}}$ is an isometry between Banach algebras
            \item For each $x\in {\mathcal{A}}$,  $\norm{x^2}=\norm{x}^2$
            \item For each $x\in {\mathcal{A}}$, $\norm{x}=\norm{\widehat{x}}$
        \end{enumerate}
    \end{theorem}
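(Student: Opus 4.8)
The plan is to handle $(1)\Leftrightarrow(3)$ as a matter of unwinding definitions and to locate the real content in $(2)\Leftrightarrow(3)$. First I would record two facts that hold for \emph{every} commutative unital Banach algebra: the Gelfand transform $\Gamma_{\mathcal{A}}$ is an algebra homomorphism (noted in the proof of Theorem \ref{facts}), and $|h(x)|\le\norm{x}$ for each $h\in\sigma({\mathcal{A}})$ (shown in the proof of Theorem \ref{comphaus}), so that $\norm{\widehat{x}}_{\text{sup}}\le\norm{x}$ unconditionally. Consequently $\Gamma_{\mathcal{A}}$ is an isometry between Banach algebras precisely when $\norm{\widehat{x}}_{\text{sup}}=\norm{x}$ for every $x$, which is exactly statement $(3)$; this disposes of $(1)\Leftrightarrow(3)$ with no computation.

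For $(3)\Rightarrow(2)$ I would use multiplicativity of $\Gamma_{\mathcal{A}}$, which gives $\widehat{x^2}=(\widehat{x})^2$ in $C(\sigma({\mathcal{A}}))$, together with the elementary identity $\norm{f^2}_{\text{sup}}=\norm{f}_{\text{sup}}^2$ valid for any continuous $f$ on the compact space $\sigma({\mathcal{A}})$ (since $\sup|f^2|=(\sup|f|)^2$). Then, assuming $(3)$,
\[\norm{x^2}=\norm{\widehat{x^2}}_{\text{sup}}=\norm{(\widehat{x})^2}_{\text{sup}}=\norm{\widehat{x}}_{\text{sup}}^2=\norm{x}^2.\]

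For $(2)\Rightarrow(3)$ I would iterate the hypothesis to obtain $\norm{x^{2^k}}=\norm{x}^{2^k}$, hence $\norm{x^{2^k}}^{1/2^k}=\norm{x}$, for every $k\ge 1$. At this point I would invoke the spectral radius formula
\[\max_{\lambda\in\sigma(x)}|\lambda|=\lim_{n\to\infty}\norm{x^n}^{1/n},\]
whose left side equals $\norm{\widehat{x}}_{\text{sup}}$ by the third assertion of Theorem \ref{facts}. Since the limit on the right exists, evaluating it along the subsequence $n=2^k$ gives $\norm{\widehat{x}}_{\text{sup}}=\lim_{k\to\infty}\norm{x^{2^k}}^{1/2^k}=\norm{x}$, which is $(3)$.

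The only ingredient that is not a routine manipulation is the spectral radius formula $r(x)=\lim_n\norm{x^n}^{1/n}$, and specifically its harder inequality $\limsup_n\norm{x^n}^{1/n}\le\max_{\lambda\in\sigma(x)}|\lambda|$, which relies on analyticity of the resolvent $\lambda\mapsto(\lambda e-x)^{-1}$ off the spectrum and a Laurent-series argument. In keeping with the expository level of this section I would cite it (e.g. Folland\cite{Fo15} or Remling\cite{Re08}) rather than reprove it; everything else in the argument — the homomorphism property of $\Gamma_{\mathcal{A}}$, the identity $\norm{f^2}_{\text{sup}}=\norm{f}_{\text{sup}}^2$, and the induction on $k$ — is immediate.
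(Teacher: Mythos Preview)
Your proof is correct and follows essentially the same route as the paper: $(1)\Leftrightarrow(3)$ by unwinding definitions, $(3)\Rightarrow(2)$ via $\norm{\widehat{x}^2}=\norm{\widehat{x}}^2$ in $C(\sigma(\mathcal{A}))$, and $(2)\Rightarrow(3)$ via the spectral radius formula combined with Theorem~\ref{facts}(c). The paper compresses this last step to the single unexplained line $\norm{\widehat{x}}=\max_{\lambda\in\sigma(x)}|\lambda|=\norm{x}$, so your explicit iteration $\norm{x^{2^k}}=\norm{x}^{2^k}$ and citation of the spectral radius formula actually supply detail that the paper leaves implicit.
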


    \begin{proof}
    Suppose $(b)$ holds. Then $\norm{\widehat{x}}=\max{\lambda\in\sigma(x)}|\lambda|=\norm{x}$. Conversely, suppose $(c)$ holds. Then,
    \[\norm{x^2}\leq\norm{x}^2=\norm{\widehat{x}}^2=\norm{\widehat{x}^2}\leq\norm{x^2}\] whence $(b)$ holds. Since $(a)\iff(c)$ is clear, the claim follows.
    \end{proof}

We now present a key result of this section which is instrumental in our second formulation of the finite-dimensional spectral theorem.

    \begin{theorem}[Gelfand-Naimark Theorem]
    Let ${\mathcal{A}}$ be a commutative unital $\mathbf{C}^\ast$-algebra. Then $\Gamma$ is an isometric $\ast$-isomorphism from ${\mathcal{A}}$ into $C(\sigma({\mathcal{A}}))$.
    \end{theorem}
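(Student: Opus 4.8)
The plan is to assemble the Gelfand--Naimark theorem from the pieces already developed rather than to prove anything from scratch. Recall that $\Gamma = \Gamma_{\mathcal{A}} : \mathcal{A} \to C(\sigma(\mathcal{A}))$ is always an algebra homomorphism (noted in the proof of Theorem~\ref{facts}), and that by Theorem~\ref{c star} a commutative unital $\mathbf{C}^\ast$-algebra is symmetric, so $\widehat{x^\ast} = \overline{\widehat{x}}$; this is exactly the statement that $\Gamma$ is a $\ast$-homomorphism. So the remaining tasks are: (i) show $\Gamma$ is isometric, (ii) conclude $\Gamma$ is injective, and (iii) upgrade the density of $\Gamma(\mathcal{A})$ in $C(\sigma(\mathcal{A}))$ (already given by Theorem~\ref{c star}) to surjectivity, so that ``into'' becomes ``onto.''

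First I would establish the isometry. By Theorem~\ref{facts}(c), $\norm{\widehat{x}}_{\sup} = \max_{\lambda \in \sigma(x)} |\lambda|$ is the spectral radius of $x$. To identify this with $\norm{x}$ it suffices, by the preceding Theorem (the one giving $(a)\iff(b)\iff(c)$), to verify the $\mathbf{C}^\ast$-identity $\norm{x^2} = \norm{x}^2$ for self-adjoint elements and then bootstrap: for a self-adjoint $x$, the $\mathbf{C}^\ast$-condition gives $\norm{x}^2 = \norm{x x^\ast} = \norm{x^2}$ immediately, and iterating yields $\norm{x}^{2^k} = \norm{x^{2^k}}$, so the spectral radius formula $r(x) = \lim_k \norm{x^k}^{1/k}$ forces $r(x) = \norm{x}$ for self-adjoint $x$. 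For general $x$, apply this to the self-adjoint element $x^\ast x$ together with $\widehat{x^\ast x} = \overline{\widehat{x}}\,\widehat{x} = |\widehat{x}|^2$: then $\norm{x}^2 = \norm{x^\ast x} = \norm{\widehat{x^\ast x}}_{\sup} = \norm{|\widehat{x}|^2}_{\sup} = \norm{\widehat{x}}_{\sup}^2$, giving $\norm{\Gamma x}_{\sup} = \norm{x}$. Injectivity of $\Gamma$ is then automatic: an isometry has trivial kernel.

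Finally, surjectivity. Since $\Gamma$ is an isometry from the Banach space $\mathcal{A}$ onto its image, and $\mathcal{A}$ is complete, the image $\Gamma(\mathcal{A})$ is a complete — hence closed — subspace of $C(\sigma(\mathcal{A}))$. But Theorem~\ref{c star} already tells us $\Gamma(\mathcal{A})$ is dense in $C(\sigma(\mathcal{A}))$ (via Stone--Weierstrass: the image is a point-separating, conjugation-closed, unital subalgebra). A dense closed subspace is the whole space, so $\Gamma(\mathcal{A}) = C(\sigma(\mathcal{A}))$, and $\Gamma$ is the asserted isometric $\ast$-isomorphism.

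The main obstacle is the spectral-radius step in establishing the isometry: one needs the spectral radius formula $r(x) = \lim_{k\to\infty} \norm{x^k}^{1/k}$, which has not been proved in the excerpt and which genuinely requires complex-analytic input (holomorphy of the resolvent together with Hadamard's formula for the radius of convergence, or an equivalent argument). I would either invoke this as a standard fact with a citation to Folland or Remling, or — if a self-contained argument is wanted — reduce to it carefully; everything else in the proof is bookkeeping with results already in hand.
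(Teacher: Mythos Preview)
Your proposal is correct and follows essentially the same route as the paper: use Theorem~\ref{c star} for the $\ast$-preservation and density, establish the isometry via the $\mathbf{C}^\ast$-identity on the self-adjoint element $x^\ast x$ together with $\widehat{x^\ast x}=|\widehat{x}|^2$, and then conclude surjectivity from closed-plus-dense range. The only difference is packaging: where you invoke the spectral radius formula directly, the paper appeals to the preceding equivalence theorem (the $(a)\Leftrightarrow(b)\Leftrightarrow(c)$ result), whose proof of $(b)\Rightarrow(c)$ itself rests on exactly the spectral-radius input you flagged---so your caveat about needing that formula is well placed and applies equally to the paper's argument.
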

    
    \begin{proof}
    Since ${\mathcal{A}}$ is symmetric, $\Gamma_{\mathcal{A}}$ is a $\ast$-preserving map by \ref{c star}. Notice that for each $x\in {\mathcal{A}}$, we have $\norm{(xx^\ast)^2}=\norm{xx^\ast}^2$ and from the previous result $\norm{\widehat{xx^\ast}}=\norm{xx^\ast}$, hence
    \[\norm{x}^2=\norm{xx^\ast}=\norm{\widehat{xx^\ast}}=\norm{\widehat{x}^2}=\norm{\widehat{x}}^2\]
    since ${\mathcal{A}}$ is symmetric. This shows $\Gamma_{\mathcal{A}}$ is an isometry, from which it follows that $\Gamma_{\mathcal{A}}$ is injective and has closed range.  By \ref{c star}, $\Gamma({\mathcal{A}})$ is also dense in $C(\sigma({\mathcal{A}}))$ so that $\Gamma$ is surjective.
    \end{proof}

Let us now formulate our second version of the Spectral theorem finite-dimensional Hilbert spaces.

    \begin{theorem}[Spectral Theorem II]\label{spectraltheoremii}
    Let $V$ be a finite dimensional Hilbert space, and let $T$ be a normal operator on $V$. Then the following hold:
    \begin{enumerate}
        \item There is an orthonormal basis for $V$ consisting of eigenvectors of $T$.
        \item $T=\sum_{\lambda\in\sigma(T)} \lambda P_{\mathcal{E}_\lambda}$.
    \end{enumerate}
    \end{theorem}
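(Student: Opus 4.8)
The plan is to derive Spectral Theorem II as a corollary of the Gelfand--Naimark theorem applied to a cleverly chosen algebra, though I should note that part (1) already follows immediately from Spectral Theorem I: since $T$ is normal, condition (3) there gives condition (1), namely an orthonormal basis $\{v_i\}$ of eigenvectors. So the genuine content is part (2), the spectral resolution $T = \sum_{\lambda \in \sigma(T)} \lambda P_{\mathcal{E}_\lambda}$. I would first observe that once we have an orthonormal basis of eigenvectors, $V$ decomposes as the orthogonal sum $V = \bigoplus_{\lambda \in \sigma(T)} \mathcal{E}_\lambda$, since eigenvectors for distinct eigenvalues of a normal operator are orthogonal (using the lemma that $Tv = \lambda v \iff T^\ast v = \overline{\lambda} v$, so that $\lambda \langle v, w\rangle = \langle Tv, w\rangle = \langle v, T^\ast w\rangle = \langle v, \overline{\mu} w \rangle = \mu \langle v, w \rangle$ forces $\langle v,w\rangle = 0$ when $\lambda \neq \mu$). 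The projections $P_{\mathcal{E}_\lambda}$ are then mutually orthogonal and sum to the identity.

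To finish part (2) directly, I would apply both sides of the claimed identity to an arbitrary basis eigenvector $v_i$ with eigenvalue $\mu$. The right-hand side gives $\sum_{\lambda} \lambda P_{\mathcal{E}_\lambda} v_i = \mu v_i$ since $P_{\mathcal{E}_\mu} v_i = v_i$ and $P_{\mathcal{E}_\lambda} v_i = 0$ for $\lambda \neq \mu$ (by orthogonality of the eigenspaces). The left-hand side gives $T v_i = \mu v_i$. Since the two operators agree on a basis, they are equal. This is the fast route and is essentially a routine computation.

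Alternatively, to honor the section's C*-algebraic theme, I would let $\mathcal{A} \subseteq \mathcal{L}(V)$ be the unital C*-subalgebra generated by $T$ and $T^\ast$; since $T$ is normal this is commutative. By Gelfand--Naimark, $\Gamma_{\mathcal{A}}: \mathcal{A} \to C(\sigma(\mathcal{A}))$ is an isometric $\ast$-isomorphism. One checks that $h \mapsto h(T)$ identifies $\sigma(\mathcal{A})$ with $\sigma(T)$, which is a finite set in the finite-dimensional case, so $C(\sigma(\mathcal{A}))$ is just $\C^{\sigma(T)}$ with pointwise operations, and the characteristic functions $\mathbf{1}_{\{\lambda\}}$ are idempotents summing to $\mathbf{1}$. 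Pulling these back through $\Gamma_{\mathcal{A}}^{-1}$ yields projections $p_\lambda \in \mathcal{A}$ with $p_\lambda p_\mu = 0$ for $\lambda \neq \mu$, $\sum_\lambda p_\lambda = \mathrm{Id}$, and $T = \sum_\lambda \lambda p_\lambda$ since $\widehat{T}(h) = h(T)$ is the identity function on $\sigma(T) \subseteq \C$. The last task is to identify $p_\lambda$ with $P_{\mathcal{E}_\lambda}$: since $T p_\lambda = \lambda p_\lambda$, the range of $p_\lambda$ lies in $\mathcal{E}_\lambda$, and since the $p_\lambda$ sum to the identity while the $\mathcal{E}_\lambda$ fill up $V$, equality of ranges follows, and a self-adjoint idempotent with a given range is the orthogonal projection onto it.

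I expect the main obstacle, if taking the C*-algebraic route, to be the careful identification of $\sigma(\mathcal{A})$ with $\sigma(T)$ and the verification that the abstract idempotents $p_\lambda$ coincide with the geometric projections $P_{\mathcal{E}_\lambda}$ — these bookkeeping steps are where the abstraction must be cashed out into concrete linear algebra. If instead the direct route is preferred, there is essentially no obstacle: the only point requiring care is the orthogonality of distinct eigenspaces for a normal operator, which is handled by the adjoint-eigenvalue lemma proved earlier. Given the expository tone, I would present the direct argument as the main proof and perhaps remark on the C*-algebraic interpretation.
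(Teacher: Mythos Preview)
Your proposal is correct, and in fact your \emph{alternative} C*-algebraic argument is almost exactly the paper's actual proof: the paper lets $\mathcal{A}$ be the commutative unital $C^\ast$-algebra generated by $\{T,T^\ast,I\}$, applies Gelfand--Naimark to conclude $\sigma(\mathcal{A})$ is finite, pulls back the characteristic functions $\chi_{\{\lambda\}}$ to projections $P_\lambda$, writes $T=\sum_\lambda \widehat{T}(\lambda)P_\lambda$, and then identifies $P_\lambda$ as the projection onto the eigenspace for $\widehat{T}(\lambda)$ by observing $TP_\lambda v=\widehat{T}(\lambda)P_\lambda v$. The only cosmetic difference is that the paper keeps the indexing by $\lambda\in\sigma(\mathcal{A})$ throughout rather than first identifying $\sigma(\mathcal{A})\cong\sigma(T)$ via $h\mapsto h(T)$ as you suggest.

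Where you diverge from the paper is in your stated preference: you would lead with the direct argument (invoke Spectral Theorem I for part (1), then verify part (2) on the eigenbasis) and relegate the $C^\ast$-algebraic version to a remark. The paper does the opposite, and deliberately so: the entire point of this section is to re-derive the spectral theorem \emph{from} Gelfand--Naimark, independently of the Schur-decomposition argument in the previous section. Your direct route is shorter and perfectly valid, but it leans on Spectral Theorem I and so does not showcase the Gelfand machinery; the paper's route is longer but self-contained within the $C^\ast$-framework and sets up the template reused verbatim in the later sections on finite abelian groups and $L^2(S^1)$. So your instinct about which proof is cleaner is right, but the paper's choice is dictated by the expository arc rather than efficiency.
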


    \begin{proof}
    Let $T\in\mathcal{L}(V)$ be normal, and let ${\mathcal{A}}$ be the $\mathbf{C}^\ast$-subalgebra of $\mathcal{L}(V)$ generated by $\{{\mathcal{A}},{\mathcal{A}}^\ast,I\}$. Then one can check that ${\mathcal{A}}$ is a commutative unital $\mathbf{C}^\ast$-algebra. By the Gelfand-Naimark theorem, ${\mathcal{A}}$ is isomorphic to $C(\sigma({\mathcal{A}}))$. Hence, $\dim\big(C(\sigma({\mathcal{A}}))\big)<\infty$, which in turn forces $\sigma({\mathcal{A}})$ to be finite. Notice then that for each $f\in C(\sigma({\mathcal{A}}))$,
        \[f=\sum_{\lambda\in\sigma({\mathcal{A}})} f(\lambda)\chi_{\lambda}\]
    where $\chi_{\lambda}$ is the characteristic function of $\{\lambda\}$, whence the family $\chi_\lambda$ for $\lambda\in\sigma({\mathcal{A}})$ is a basis for the space $C(\sigma({\mathcal{A}}))$. For each $\chi_\lambda$, $\chi_\lambda^2=\chi_\lambda$, $\chi_\lambda^\ast=\chi_\lambda$, and $\chi_{\cap_\lambda\{\lambda\}}=\Pi_\lambda \chi_\lambda$, from which we see that each $\chi_\lambda$ is actually the image of an orthogonal projection $P_\lambda$ under $\Gamma_{\mathcal{A}}$, and since ${\mathcal{A}}$ is isomorphic to $C(\sigma({\mathcal{A}}))$, that the family $P_\lambda$ is a basis for ${\mathcal{A}}$. In fact, it is easily checked that for each $S\in {\mathcal{A}}$ with Gelfand Transform $\widehat{S}$, one has
        \[S=\sum_{\lambda\in\sigma({\mathcal{A}})} \widehat{S}(\lambda)P_\lambda.\]
    In particular,
       \[T=\sum_{\lambda\in\sigma({\mathcal{A}})} \widehat{T}(\lambda)P_\lambda.\]
    Let us look at the ranges of $P_\lambda$ as subspaces of $V$. Suppose $P_\lambda v=v$. Then, $Tv=TP_\lambda v=P_\lambda Tv=\widehat{T}(\lambda)v$ and we see that $P_\lambda$ is actually a projection onto the eigenspace associated to eigenvalue $\widehat{T}(\lambda)$, whence $(b)$ follows. To see why $(a)$ is true, it suffices to observe that since
    \[I=\sum_{\lambda\in\sigma({\mathcal{A}})}P_\lambda\]
    we have the decomposition 
    \[V=\bigoplus_{\lambda\in\sigma({\mathcal{A}})}\operatorname{Range}(P_\lambda)=\bigoplus_{\mu\in\sigma(T)}\mathcal{E}_\mu.\]
    Each eigenspace associated to $T$ admits an orthonormal basis, and by taking the union all eigenspace bases, one arrives at such an orthonormal basis for the whole space.
    \end{proof}

\section{Fourier Analysis of Finite Abelian Groups}

In this section, we strive to apply the spectral theory developed in the preceding sections to the special setting of Fourier analysis on finite abelian groups; in particular, with the goal of proving Plancherel Formula for finite abelian groups, and classifying the to a general finite abelian group in mind. We follow the classic exposition of Stein, Shakarchi\citep[ch. 7]{St07}.

    \begin{definition}
    Let ${G}$ be a set closed under a binary operation $\cdot:{G}\times {G}\rightarrow {G}$. We say $({G},\cdot)$ is an abelian group if it satisfies the following four axioms:
        \begin{enumerate}
            \item Associativity: For each $a,b,c\in {G}$, $a\cdot(b\cdot c)=(a\cdot b)\cdot c$.
            \item Identity: There is some $e\in {G}$ so that for each $a\in {G}$,  $a\cdot e=e\cdot a=a$.
            \item Inverses: For each $a\in {G}$ there is $a^{-1}$ so that $a\cdot a^{-1}=a^{-1}\cdot a=e$.
            \item Commutativity: For each $a,b\in {G}$ we have $a\cdot b=b\cdot a$.
        \end{enumerate}
    We will omit the $\cdot$ in general, and use juxtaposition to indicate that the group operation is being performed.
    \end{definition}

Two examples of abelian groups include $(\mathbb{R},+)$ and $(\mathbb{R}_{>0},\times)$. A key example of an abelian group is the unit circle $S^1:=\{z\in\mathbb{C}:|z|=1\}$ under multiplication, where for each $\omega\in S^1$, $\omega^{-1}=\overline{\omega}$.\par
In some sense, we seek to generalize to the setting of abelian groups the notion of a multiplicative functional and consequently the notion of spectrum. We do not necessarily have the additive or multiplicative structure of, say, a Banach algebra, but we do have the notion of a homomorphism, which allows us a path forward.
    \begin{definition}
    Let $({G},\cdot)$, $({H},\times)$ be two abelian groups. A homomorphism is a map $\phi:{G}\rightarrow {H}$ which preserves the group operations, in the sense that for each $a,b\in {G}$
        \[\phi(a\cdot b)=\phi(a)\times \phi(b).\]
    If $\phi$ is a bijection, we say it is an isomorphism and that the groups ${G},{H}$ are isomorphic and we write ${G}\cong {H}$.
    \end{definition}

    \begin{definition}
    A character on an abelian group $({G},\cdot)$ is a homomorphism $\zeta:{G}\rightarrow S^1$.
    \end{definition}
    
    \begin{definition}
      Let $({G},\cdot)$ be an abelian group. The dual group of ${G}$ is the family $$\widehat{{G}}:=\{\zeta:{G}\rightarrow S^1:\zeta(ab)=\zeta(a)\zeta(b)\}$$ of characters on ${G}$, under operation of pointwise multiplication with identity given by $e\equiv 1$.
    \end{definition}

It turns out that characters are in fact the analogues of multiplicative functionals which we seek. This is illustrated by the following facts.

    \begin{lemma}\label{charlemma}
    Let $({G},\cdot)$ be a finite abelian group and suppose $f:{G}\rightarrow\mathbb{C}\backslash\{0\}$ be a multiplicative function. Then $f$ is a character.
    \end{lemma}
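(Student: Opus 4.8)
The plan is to show that a nonvanishing multiplicative function $f$ on a finite abelian group $G$ automatically takes values in $S^1$, which is exactly the extra condition needed to upgrade ``homomorphism into $\C\backslash\{0\}$'' to ``character.'' The key observation is finiteness: every element $a\in G$ has finite order, say $a^n = e$ for some $n \geq 1$ (indeed one may take $n = \abs{G}$ by Lagrange's theorem, or simply the order of $a$). Applying $f$ and using multiplicativity gives $f(a)^n = f(a^n) = f(e)$.

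First I would pin down $f(e)$. Since $f(e) = f(e\cdot e) = f(e)^2$ and $f(e) \neq 0$, we may cancel (working in $\C\backslash\{0\}$) to get $f(e) = 1$. Combined with the previous paragraph, this yields $f(a)^n = 1$ for every $a\in G$, with $n$ depending on $a$ (or uniformly $n = \abs{G}$). Hence $f(a)$ is a root of unity, and in particular $\abs{f(a)} = 1$, so $f(a)\in S^1$. Therefore $f$ is a homomorphism $G\to S^1$, i.e.\ a character.

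There is essentially no obstacle here — the argument is short and the only ingredient beyond the definitions is that finite groups have torsion elements, which is immediate. The one point worth stating carefully is the cancellation step establishing $f(e)=1$: it uses precisely the hypothesis that $f$ avoids $0$, so that $f(e)$ is an invertible complex number and $f(e)^2 = f(e)$ forces $f(e) = 1$. I would also remark in passing that the values of $f$ are not merely on $S^1$ but are genuine roots of unity (of order dividing $\abs{G}$), which foreshadows the finite structure of $\widehat{G}$ to be exploited later, though this stronger statement is not needed for the lemma as stated.
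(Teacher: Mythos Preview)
Your proof is correct. The paper's argument is slightly different in flavor: rather than using the torsion of $G$ explicitly, it observes that the finite set $\{|f(a)|:a\in G\}$ is bounded, and since $|f(a^n)|=|f(a)|^n$ for all $n\in\N$, the only way for the powers $|f(a)|^n$ to stay within that finite set of values is to have $|f(a)|=1$. Your route---fixing $f(e)=1$ via $f(e)^2=f(e)\neq 0$ and then using $a^n=e$ to force $f(a)^n=1$---reaches the same conclusion by a more algebraic path and yields the sharper fact that every value is a root of unity of order dividing $|G|$, which the paper's boundedness argument does not directly give. Both arguments are equally short; yours is perhaps a touch more explicit, while the paper's avoids computing $f(e)$ altogether.
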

    
    \begin{proof}
    Since ${G}$ has finitely many elements, $|f(a)|$ is bounded for $a\in {G}$. Since $|f(a^n)|=|f(a)|^n$ for each $a\in {G}$, $n\in\mathbb{N}$, it follows that $|f(a)|=1$ for each $a$.
    \end{proof}

    \begin{definition}
    Let $({G},\cdot)$ be a finite abelian group. Define the function space $\ell_2({G}):=\{f:{G}\rightarrow\mathbb{C}\}$, and equip it with Hermitian inner product 
        \[\langle f,g\rangle =\frac{1}{|{G}|}\sum_{a\in {G}}f(a)\overline{g(a)}\]
    where $|{G}|$ is the number of elements in ${G}$, and with respect to which $\ell_2({G})$ is a Hilbert space isomorphic to $\mathbb{C}^{|{G}|}$ as a vector space with equivalent (not quite isometric) norms as Hilbert spaces.
    \end{definition}
    
    \begin{theorem}\label{charorthfam}
    Let $({G},\cdot)$ be a finite abelian group. Then the dual group $\widehat{{G}}$ is an orthonormal family in $\ell_2({G})$.
    \end{theorem}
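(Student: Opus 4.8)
The plan is to show two things: first that each character $\zeta \in \widehat{G}$ has norm one in $\ell_2(G)$, and second that distinct characters are orthogonal. The first is immediate: since $|\zeta(a)| = 1$ for every $a \in G$ (as $\zeta$ maps into $S^1$), we compute $\langle \zeta, \zeta \rangle = \frac{1}{|G|}\sum_{a \in G} \zeta(a)\overline{\zeta(a)} = \frac{1}{|G|}\sum_{a \in G} |\zeta(a)|^2 = \frac{1}{|G|} \cdot |G| = 1$.

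For orthogonality, I would take two distinct characters $\zeta, \eta \in \widehat{G}$ and consider $\langle \zeta, \eta \rangle = \frac{1}{|G|}\sum_{a \in G} \zeta(a)\overline{\eta(a)}$. The key observation is that $\overline{\eta(a)} = \eta(a)^{-1} = \eta(a^{-1})$ since $\eta(a) \in S^1$, so $\zeta \overline{\eta}$ is itself a character, call it $\chi = \zeta \eta^{-1}$, which is nontrivial precisely because $\zeta \neq \eta$. Thus it suffices to prove the standard summation lemma: if $\chi$ is a nontrivial character on a finite abelian group $G$, then $\sum_{a \in G} \chi(a) = 0$. The trick here is to pick $b \in G$ with $\chi(b) \neq 1$ (which exists by nontriviality) and note that as $a$ ranges over $G$, so does $ba$; hence $\sum_{a \in G} \chi(a) = \sum_{a \in G} \chi(ba) = \chi(b) \sum_{a \in G} \chi(a)$, which forces $(1 - \chi(b))\sum_{a\in G}\chi(a) = 0$ and therefore $\sum_{a \in G}\chi(a) = 0$. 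Combining, $\langle \zeta, \eta \rangle = 0$ for $\zeta \neq \eta$.

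The main (and really only) obstacle is the summation lemma for nontrivial characters, and even that is not much of an obstacle — the reindexing argument $a \mapsto ba$ is the whole content. Everything else is a direct unwinding of definitions using $|S^1| = 1$ pointwise. I would present the norm computation first, then state and prove the summation lemma as a short intermediate step (or inline it), then conclude orthogonality. No completeness or spanning claim is needed here since the statement only asserts that $\widehat{G}$ is an orthonormal \emph{family}, not a basis; that strengthening would come later via a dimension count once $|\widehat{G}| = |G|$ is established.
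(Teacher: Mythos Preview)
Your proposal is correct and matches the paper's proof essentially step for step: both compute $\norm{\zeta}^2=1$ from $|\zeta(a)|=1$, and both reduce orthogonality to showing $\sum_{a\in G}(\zeta\eta^{-1})(a)=0$ via the reindexing $a\mapsto ba$ for some $b$ with $(\zeta\eta^{-1})(b)\neq 1$. The only cosmetic difference is that you spell out explicitly that $\overline{\eta}=\eta^{-1}$ and name $\chi=\zeta\eta^{-1}$ as a character, whereas the paper leaves this implicit.
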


    \begin{proof}
    To see that each $\zeta\in\widehat{{G}}$ is unit length, notice that for each $\zeta\in\widehat{{G}}$ one has
        \[\norm{\zeta}^2=\frac{1}{|{G}|}\sum_{a}|\zeta(a)|^2=1.\]
    To see why $\widehat{{G}}$ is an orthogonal collection, suppose we have two distinct characters $\zeta,\omega\in\widehat{{G}}$. Find some $b\in {G}$ for which $(\zeta\omega^{-1})(b)\neq 1$. Then,
        \[(\zeta\omega^{-1})(b)\sum_{a\in {G}}(\zeta\omega^{-1})(a)=\sum_{a\in\Gamma}(\zeta\omega^{-1})(ba)=\sum_{a\in\Gamma}(\zeta\omega^{-1})(a)\Rightarrow \sum_{a\in\Gamma}(\zeta\omega^{-1})(a)=0\]
    whence $\frac{1}{|{G}|}\sum_a \zeta(a)\overline{\omega}(a)=0$ and $\zeta,\omega$ are orthogonal.
    \end{proof}

    \begin{theorem}\label{charbasis}
    Let $({G},\cdot)$ be a finite abelian group. The family $\widehat{{G}}$ forms an orthonormal basis for the function space $\ell_2({G})$.
    \end{theorem}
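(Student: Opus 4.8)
The plan is to show that the orthonormal family $\widehat{G}$ from Theorem \ref{charorthfam} is in fact a complete orthonormal set in $\ell_2(G)$, which since $\ell_2(G)$ is finite-dimensional reduces to a counting argument: it suffices to prove $|\widehat{G}| = |G| = \dim \ell_2(G)$. I would not attempt to exhibit the characters explicitly; instead I would route the argument through the spectral theory developed in the previous sections, which is the natural ``intended'' proof given the placement of this theorem.

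Here are the key steps in order. First, for each fixed $b \in G$ define the translation operator $T_b : \ell_2(G) \to \ell_2(G)$ by $(T_b f)(a) = f(ab)$. One checks directly that each $T_b$ is unitary (translation merely permutes the summands in the inner product), hence normal, and that the family $\{T_b : b \in G\}$ is commuting since $G$ is abelian. Second, I would invoke the simultaneous version of Spectral Theorem I / II: a commuting family of normal operators on a finite-dimensional Hilbert space is simultaneously diagonalizable, so $\ell_2(G)$ decomposes as an orthogonal direct sum of joint eigenspaces. Third, I would identify the joint eigenvectors: if $f$ is a joint eigenvector with $T_b f = \lambda_b f$ for all $b$, then evaluating $(T_b f)(e) = f(b) = \lambda_b f(e)$ shows $f(e) \neq 0$ (else $f \equiv 0$), and then $\lambda_b = f(b)/f(e)$; the relation $T_{b}T_{c} = T_{bc}$ forces $b \mapsto \lambda_b$ to be multiplicative into $\C \setminus \{0\}$, hence a character by Lemma \ref{charlemma}, and conversely every character is visibly such a joint eigenvector. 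Thus the joint eigenspaces are exactly the one-dimensional spaces $\C \zeta$ for $\zeta \in \widehat{G}$, and the direct sum decomposition gives $\ell_2(G) = \bigoplus_{\zeta \in \widehat{G}} \C\zeta$, i.e. $\widehat{G}$ spans. Combined with Theorem \ref{charorthfam}, this makes $\widehat{G}$ an orthonormal basis.

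The main obstacle is the simultaneous diagonalization step: the spectral theorems stated in the excerpt are for a \emph{single} normal operator, so I would need to supply the standard upgrade to a commuting family. The cleanest way is induction on the number of operators (or on $\dim V$): diagonalize $T_{b_1}$ to get a decomposition into its eigenspaces, observe each other $T_{b}$ preserves these eigenspaces since it commutes with $T_{b_1}$, and apply the inductive hypothesis within each eigenspace; normality is preserved under restriction to an invariant subspace of a normal operator. This is routine but is the one genuinely new ingredient. An alternative route that sidesteps simultaneous diagonalization entirely is to instead prove completeness by a direct Fourier-inversion computation — showing $\sum_{\zeta \in \widehat{G}} \zeta(a)\overline{\zeta(b)}$ equals $|G|$ when $a = b$ and $0$ otherwise — but that requires separately establishing $|\widehat{G}| = |G|$ (e.g. via the structure theorem for finite abelian groups or a duality/double-dual argument), so the operator-theoretic approach is more in keeping with the spirit of the paper.
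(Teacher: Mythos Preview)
Your proposal is correct and follows the same overall architecture as the paper: introduce the translation operators $T_b$, simultaneously diagonalize them, and identify the joint eigenvectors as scalar multiples of characters via Lemma~\ref{charlemma}, so that the joint eigenspaces are one-dimensional and indexed by $\widehat{G}$. The difference lies in how the simultaneous diagonalization is obtained. You propose to upgrade the single-operator spectral theorem to a commuting family by induction on eigenspaces, which works but is, as you note, an ingredient not supplied by the paper. The paper instead applies the Gelfand--Naimark theorem directly to the commutative unital $C^\ast$-algebra $\mathcal{T}$ generated by $\{T_a\}_{a\in G}$: since $\mathcal{T}\cong C(\sigma(\mathcal{T}))$ is finite-dimensional, $\sigma(\mathcal{T})$ is a finite set, the characteristic functions of its points pull back under $\Gamma_\mathcal{T}^{-1}$ to mutually orthogonal projections $P_\lambda$, and every $T_a$ decomposes as $\sum_\lambda \widehat{T_a}(\lambda)P_\lambda$ at once. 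The characters then appear as $\zeta_\lambda(a):=\widehat{T_a}(\lambda)$. This is precisely the machinery Section~\ref{Cstaralgebras} was set up to deliver, so no separate inductive lemma is needed; the ``simultaneous'' aspect comes for free from working with the whole algebra rather than one operator at a time. Your alternative route via Fourier inversion and the structure theorem is, as you suspected, further from the paper's intent.
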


    \begin{proof}
    Consider the operator $T_a\in\mathcal{L}(\ell_2({G}))$ defined by
        \[(T_af)(x)=f(a x).\]
    Let us generate an algebra with this family of operators. One may easily check that for each $a\in {G}$, the adjoint $T_a^\ast$ coincides with $T_{a^{-1}}$, so the subalgebra $\mathcal{T}$ of $\mathcal{L}(\ell_2({G}))$ generated by $\{T_a\}_{a\in {G}}$ is closed under adjoint, and since $\mathcal{T}$ is abelian, it is a commutative unital $\mathbf{C}^\ast$-algebra under the usual operations, with identity given by the identity operator $T_e=I$ on $\ell_2({G})$.
    
    By the Gelfand-Naimark Theorem, $\mathcal{T}$ is isomorphic to $C(\sigma(\mathcal{T}))$. It follows that $\dim{\mathcal{T}}=|{G}|=\dim{C(\sigma(\mathcal{T}))}=|\sigma(\mathcal{T})|$ and that each $f\in C(\sigma(\mathcal{T}))$ may be written
        \[f=\sum_{\lambda\in\sigma(\mathcal{T})}f(\lambda)\chi_{H}\]
    where $\chi_{H}$ is the characteristic function of $\{\lambda\}\subset\sigma(\mathcal{T})$. It then follows that $\{\chi_\lambda\}_{H}$ is a basis for $C(\sigma(\mathcal{T}))$, and by the arguments in the proof of \eqref{spectraltheoremii}, that each $T_a\in\mathcal{T}$ may be written
        \begin{equation}\label{Tadecom}
        T_a=\sum_{\lambda\in\sigma(\mathcal{T})}\widehat{T_a}(\lambda)P_\lambda \end{equation}
    where $\widehat{T_a}$ is the Gelfand transform of $T_a$, and $P_\lambda=\{\Gamma^{-1}(\chi_{\{\lambda\}})$ are some orthogonal projections on $\ell_2({G})$ yet to be fully determined. Indeed, let us examine their ranges. Suppose $P_\lambda f=f$ for some $f\in\ell_2({G})$. Then,
        \[T_a f=T_a P_\lambda f=\widehat{T_a}(\lambda) f\]
    whence $f$ is a eigenvector associated to $T_a$ with eigenvalue $\widehat{T_a}(\lambda)$. In fact, the conclusion on $f$ is even stronger. $f$ is a simultaneous eigenvector for \textit{every} $T_a\in\mathcal{T}$ with eigenvalues $\widehat{T_a}(\lambda)$. It follows that $P_\lambda$ is a projection onto the space 
        \[E_\lambda:=\{f\in\ell_2(g):T_a f=\widehat{T_a}(\lambda)f\hspace{0.2cm}\forall a\in {G}\}.\]
    and, from applying \eqref{Tadecom} to $T_e$, we obtain the decomposition
        \begin{equation}\label{decompofell}
        \ell_2({G})=\bigoplus_{\lambda\in\sigma(\mathcal{T})}E_\lambda.
        \end{equation}
    For each $\lambda\in\sigma(\mathcal{T})$, suggestively construct a function $\zeta_\lambda$ on ${G}$ defined by
        \begin{equation}\label{chilambda}
            \zeta_\lambda(a)=\widehat{T_a}(\lambda).
        \end{equation}
    We check that for each $a,b\in {G}$ we have
        \[\begin{split}
        \zeta_\lambda(ab)&=\widehat{T_{ab}}(\lambda)=\lambda(T_{ab})=\lambda(T_aT_b)\\&=\lambda(T_a)\lambda(T_b)=\widehat{T_a}(\lambda)\widehat{T_b}(\lambda)=\zeta_\lambda(a)\zeta_\lambda(b)
        \end{split}\]
    and that since each $T_a$ is invertible, $\widehat{T_a}$ is never zero, and consequently that $\zeta_\lambda$ is also never zero. By \eqref{charlemma}, it follows that $\zeta_\lambda$ is in fact a character on ${G}$, and by construction, that $\zeta_\lambda\in E_\lambda$. One also observes from our definition in \eqref{chilambda} that $\zeta_\lambda$, when considered as a functional $T_a\mapsto \widehat{T_a}(\lambda)$ on $\mathcal{T}$, is also an element of the spectrum of $\mathcal{T}$, whence the family $\{\zeta_\lambda\}_{\lambda\in\sigma(\mathcal{T})}$ in in fact simply $\sigma(\mathcal{T})$. It also follows from \eqref{decompofell}, \eqref{charorthfam}, and our construction in \eqref{chilambda}, that the family $\{\zeta_\lambda\}_{\lambda\in\sigma(\mathcal{T})}$ is an orthonormal basis for $\ell_2({G})$ and exhausts every character on ${G}$ (to see the second fact, suppose there was a different character and one contradicts \eqref{decompofell}). We conclude that $\sigma(\mathcal{T})=\widehat{{G}}=\{\zeta_\lambda\}_{\lambda\in\sigma(\mathcal{T})}$ is an orthonormal basis of $\ell_2({G})$ as claimed.
    \end{proof}

We now present the classic Plancherel Formula in the setting of abelian groups which follows immediately from Theorem \eqref{charbasis}.
    \begin{theorem}[Plancherel Formula]
    Let $({G},\cdot)$ be a finite abelian group. For each $f\in\ell_2({G})$ and $\zeta\in\widehat{{G}}$, set
        \[\widehat{f}(\zeta)=\ip{f,\zeta}=\frac{1}{|{G}|}\sum_{a\in {G}}f(a)\overline{\zeta(a)}.\]
    Then
        \[\norm{f}^2=\sum_{\zeta\in\widehat{{G}}}|\widehat{f}(\zeta)|^2.\]
    \end{theorem}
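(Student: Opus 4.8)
The plan is to deduce the formula directly from Theorem \ref{charbasis}, which already establishes that $\widehat{{G}}$ is an orthonormal basis for $\ell_2({G})$; the Plancherel identity is then nothing more than the Parseval identity for this particular orthonormal basis, with the added simplification that ${G}$ (hence $\widehat{{G}}$) is finite, so every series in sight is a finite sum and no convergence subtleties arise.

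First I would invoke Theorem \ref{charbasis} to write, for an arbitrary $f\in\ell_2({G})$, the orthonormal-basis expansion $f=\sum_{\zeta\in\widehat{{G}}}\ip{f,\zeta}\,\zeta=\sum_{\zeta\in\widehat{{G}}}\widehat{f}(\zeta)\,\zeta$, where the last equality is merely the definition of the Fourier coefficient $\widehat{f}(\zeta)$ given in the statement. Next I would compute the squared norm by expanding both slots of the inner product:
\[
\norm{f}^2=\ip{f,f}=\Big\langle \sum_{\zeta\in\widehat{{G}}}\widehat{f}(\zeta)\zeta,\ \sum_{\eta\in\widehat{{G}}}\widehat{f}(\eta)\eta\Big\rangle=\sum_{\zeta,\eta\in\widehat{{G}}}\widehat{f}(\zeta)\overline{\widehat{f}(\eta)}\,\ip{\zeta,\eta}.
\]
Since $\widehat{{G}}$ is orthonormal (Theorem \ref{charorthfam}), $\ip{\zeta,\eta}=1$ when $\zeta=\eta$ and $0$ otherwise, so the double sum collapses to $\sum_{\zeta\in\widehat{{G}}}\widehat{f}(\zeta)\overline{\widehat{f}(\zeta)}=\sum_{\zeta\in\widehat{{G}}}|\widehat{f}(\zeta)|^2$, which is exactly the claimed identity.

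There is essentially no obstacle left: all of the real work was carried out in proving that the characters form an orthonormal basis. The only points deserving a word of care are that the finiteness of ${G}$ makes the expansion of $f$ a genuinely finite sum (so interchanging the sums with the inner product is trivially legitimate), and that the coefficients in the orthonormal-basis expansion of $f$ are literally the Fourier coefficients $\widehat{f}(\zeta)=\ip{f,\zeta}$ taken with respect to the normalized inner product on $\ell_2({G})$. Both are immediate from the setup, so the proof is short.
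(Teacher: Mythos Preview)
Your proposal is correct and follows exactly the approach indicated in the paper: the paper states that the Plancherel Formula ``follows immediately from Theorem \ref{charbasis}'' and gives no further argument, so your explicit Parseval computation simply fills in the one-line deduction the paper leaves to the reader.
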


Let us now transition to the goal of classifying the duals of general finite abelian groups so as to obtain a concrete understanding of what these functions look like in the wild. We shall first recall some group-theoretic definitions and develop a result about the dual of a product group.
    \begin{definition}
    Let $({G},\cdot)$ and $({H},\circ)$ be two abelian groups. We define the product group ${G}\times {H}$ by the Cartesian product ${G}\times {H}$ with operation $\otimes$ defined by
        \[(g,h)\otimes (g',h'):=(g\cdot g',h\circ h')\]
    for each $g\in {G}$, $h\in {H}$.
    \end{definition}
We leave the verification that this is actually a group as an exercise; identity and inverse elements in the product are given by pairs of the respective elements in the original groups. Since it is clear that ${G},{H}$ abelian implies ${G}\times {H}$ is also abelian, we may construct a dual product group as follows:
    \begin{definition}
    Let ${G},{H}$ be abelian groups and let ${G}\times {H}$ be the product group as defined above. The dual group of the product, written $\widehat{{G}\times {H}}$, is the group defined by the set of characters on the product under pointwise multiplication, written
        \[\widehat{{G}\times {H}}:=\{\zeta:{G}\times {H}\rightarrow S^1\big{|} \zeta((g_1,h_1)\otimes(g_2,h_2))=\zeta(g_1,h_1) \zeta(g_2,h_2)\}.\]
    for every $g_i,h_i\in {G},{H}$ resp., where for each $\zeta,\omega\in\widehat{{G}\times {H}}$, $(\zeta\omega)(a,\alpha)=\zeta(a,\alpha)\omega(a,\alpha)$.
    \end{definition}
We now prove a nice result regarding such product groups.
    \begin{theorem}\label{proddual}
    Let ${G},{H}$ be finite abelian groups. Then we have the relation
        \[\widehat{{G}\times {H}}\cong \widehat{{G}}\times\widehat{{H}}.\]
    \end{theorem}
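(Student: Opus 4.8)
The plan is to construct an explicit isomorphism $\Phi : \widehat{G} \times \widehat{H} \rightarrow \widehat{G \times H}$ and verify it is a well-defined bijective homomorphism. Given a pair of characters $(\zeta, \omega) \in \widehat{G} \times \widehat{H}$, I define $\Phi(\zeta, \omega)$ to be the function on $G \times H$ given by $\Phi(\zeta,\omega)(g,h) := \zeta(g)\omega(h)$. First I would check that this is genuinely a character on the product group: since $\zeta(g)$ and $\omega(h)$ both lie in $S^1$, so does their product, and the homomorphism property follows from a direct computation using the definition of $\otimes$ and the fact that $\zeta, \omega$ are themselves homomorphisms, namely $\Phi(\zeta,\omega)((g_1,h_1)\otimes(g_2,h_2)) = \zeta(g_1 g_2)\omega(h_1 h_2) = \zeta(g_1)\zeta(g_2)\omega(h_1)\omega(h_2) = \Phi(\zeta,\omega)(g_1,h_1)\Phi(\zeta,\omega)(g_2,h_2)$.

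Next I would verify that $\Phi$ is a homomorphism of groups: both the source and target carry pointwise multiplication, so for $(\zeta_1,\omega_1),(\zeta_2,\omega_2)$ one computes $\Phi(\zeta_1\zeta_2, \omega_1\omega_2)(g,h) = (\zeta_1\zeta_2)(g)(\omega_1\omega_2)(h) = \zeta_1(g)\omega_1(h)\cdot\zeta_2(g)\omega_2(h) = \big(\Phi(\zeta_1,\omega_1)\cdot\Phi(\zeta_2,\omega_2)\big)(g,h)$, which is again immediate. Injectivity is also straightforward: if $\Phi(\zeta,\omega) \equiv 1$, then setting $h = e_H$ gives $\zeta(g) = 1$ for all $g$, so $\zeta$ is trivial, and symmetrically $\omega$ is trivial, so the kernel is trivial.

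The one step requiring a little care is surjectivity, which I regard as the main (though still mild) obstacle. Given an arbitrary character $\eta \in \widehat{G \times H}$, I would define $\zeta(g) := \eta(g, e_H)$ and $\omega(h) := \eta(e_G, h)$, check these are characters on $G$ and $H$ respectively (restriction of a homomorphism along the natural inclusions $g \mapsto (g,e_H)$ and $h \mapsto (e_G, h)$, which are themselves group homomorphisms), and then observe that $(g,h) = (g, e_H)\otimes(e_G, h)$, so $\eta(g,h) = \eta(g,e_H)\eta(e_G,h) = \zeta(g)\omega(h) = \Phi(\zeta,\omega)(g,h)$. Hence $\Phi$ is onto. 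Combining these observations, $\Phi$ is a bijective homomorphism, i.e.\ an isomorphism, so $\widehat{G \times H} \cong \widehat{G} \times \widehat{H}$ as claimed. (As a sanity check consistent with the counting results earlier in this section, one could note that both sides then have cardinality $|\widehat{G}|\,|\widehat{H}| = |G|\,|H| = |G\times H|$, matching the general fact that a finite abelian group and its dual have the same order.) Note that finiteness is not strictly needed for this argument, but it is the setting in which the ambient results of this section were developed.
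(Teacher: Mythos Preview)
Your proof is correct and essentially the same as the paper's, just run in the opposite direction: the paper defines $\psi:\widehat{G\times H}\to\widehat{G}\times\widehat{H}$ by $\zeta\mapsto(\zeta(\cdot,e_H),\zeta(e_G,\cdot))$, whereas your $\Phi$ is precisely its inverse, and each of you uses the other's map to establish surjectivity. Your write-up is in fact a bit more complete, since you verify injectivity and check that $\Phi(\zeta,\omega)$ is a character, both of which the paper leaves to the reader.
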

    
    \begin{proof}
    To prove the claim we must produce a bijective homomorphism between the two groups. Let us construct such a function as follows. Let
        \[\psi:\widehat{{G}\times {H}}\rightarrow \widehat{{G}}\times\widehat{{H}}:\zeta\mapsto (\zeta(\cdot,e_{H}),\zeta(e_{G},\cdot))\]
    where $e_{H}$ is the identity in ${H}$ and $e_{G}$ is the identity in ${G}$. We first show that this function is a homomorphism, in the sense that for any two characters $\zeta,\omega$ on the product ${G}\times {H}$, $\psi(\zeta\omega)=\psi(\zeta)\otimes\psi(\omega)$ where $\otimes$ refers to the operation on the product of the dual groups $\widehat{{G}}\times\widehat{{H}}$. Indeed, this is verified as follows:
        \[\begin{split}
            \psi(\zeta\omega)&= (\zeta\omega(\cdot,e_{H}),\zeta\omega(e_{G},\cdot))\\
            &=(\zeta(\cdot,e_{H}),\zeta(e_{G},\cdot))\otimes(\omega(\cdot,e_{H}),\omega(e_{G},\cdot))=\psi(\zeta)\otimes\psi(\omega)
        \end{split}\]
    To check surjectivity, we show that any element $(\zeta,\omega)$ in the product of the duals $\widehat{{G}}\times\widehat{{H}}$ may be realized as the image of a character on the product group ${G}\times {H}$. Indeed, by choosing $\rho(g,h)=\zeta(g)\omega(h)$, which is a character on the product group, for each $g\in {G},h\in {H}$, we find that
        \[\psi(\rho)=(\rho(\cdot,e_{H}),\rho(e_g,\cdot))=(\zeta,\omega)\]
    so that $\psi$ is a surjection. The verification that $\psi$ is injective is left to the reader. 
    \end{proof}
Let us now look at some specific finite abelian groups and their duals, and then recall a classification theorem from algebra.
    \begin{definition}
    Let $N\geq 1$ be an integer. We define the cyclic group of order $N$ to be the set $\mathbb{Z}_N:=\{n\in\mathbb{Z}:0\leq n\leq N-1\}$ under the operation of addition modulo $N$.
    \end{definition}
    \begin{theorem}
    For each $n\in\mathbb{Z}_N$, define a character on $\mathbb{Z}_N$ by
        \[e_n(k)=e^{\frac{2\pi ink}{N}}\]
    Then the dual to $\mathbb{Z}_N$ is given by
        \[\widehat{\mathbb{Z}_N}=\{e_n\}_{n\in\mathbb{Z}_N}.\]
    \end{theorem}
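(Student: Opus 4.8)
The plan is to check that each $e_n$ is a genuine character, that the $N$ functions $e_0,\dots,e_{N-1}$ are pairwise distinct, and finally that there are no others. First I would verify well-definedness on $\mathbb{Z}_N$: since $e^{2\pi i n(k+N)/N}=e^{2\pi ink/N}\cdot e^{2\pi in}=e^{2\pi ink/N}$, the value $e_n(k)$ does not depend on the representative of $k$ modulo $N$; moreover $|e_n(k)|=1$, so $e_n$ maps $\mathbb{Z}_N$ into $S^1$. The homomorphism property $e_n(k+j)=e^{2\pi in(k+j)/N}=e_n(k)e_n(j)$ is immediate from the functional equation of the exponential, so $e_n\in\widehat{\mathbb{Z}_N}$ for each $n$.

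Next I would show that the $e_n$ are distinct. If $e_n=e_m$ as functions on $\mathbb{Z}_N$, then evaluating at $k=1$ gives $e^{2\pi in/N}=e^{2\pi im/N}$, which forces $n\equiv m\pmod N$, i.e.\ $n=m$ as elements of $\mathbb{Z}_N$. Hence $\{e_n\}_{n\in\mathbb{Z}_N}$ is a set of exactly $N$ distinct characters on $\mathbb{Z}_N$.

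Finally, to see that these exhaust $\widehat{\mathbb{Z}_N}$, I would invoke Theorem \ref{charbasis}: the dual group $\widehat{\mathbb{Z}_N}$ is an orthonormal basis of $\ell_2(\mathbb{Z}_N)$, and since $\ell_2(\mathbb{Z}_N)\cong\mathbb{C}^{N}$ has dimension $N$, we get $|\widehat{\mathbb{Z}_N}|=N$. Having already exhibited $N$ distinct elements of $\widehat{\mathbb{Z}_N}$, namely the $e_n$, we conclude $\widehat{\mathbb{Z}_N}=\{e_n\}_{n\in\mathbb{Z}_N}$. (If one prefers an argument not relying on the spectral machinery, the same conclusion follows from the fact that $\mathbb{Z}_N$ is cyclic, generated by $1$: any character $\zeta$ satisfies $\zeta(k)=\zeta(1)^k$, and $\zeta(1)^N=\zeta(N)=\zeta(0)=1$ forces $\zeta(1)$ to be an $N$th root of unity $e^{2\pi in/N}$ with $0\le n\le N-1$, whence $\zeta=e_n$.)

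There is no serious obstacle in this proof; the only points demanding a little care are the well-definedness of $e_n$ modulo $N$ and, in the elementary variant, the observation that a character on a cyclic group is completely determined by its value on a generator. The cleanest route is the dimension count supplied by Theorem \ref{charbasis}, which immediately upgrades ``$N$ distinct characters'' to ``all characters.''
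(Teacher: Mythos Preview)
Your proof is correct and follows essentially the same route as the paper: verify that each $e_n$ is a character via the multiplicativity of the exponential, count that there are $N$ of them, and invoke Theorem~\ref{charbasis} to conclude these exhaust $\widehat{\mathbb{Z}_N}$. You are simply more careful than the paper---you explicitly check well-definedness modulo $N$ and prove the $e_n$ are pairwise distinct (the paper asserts $|\{e_n\}_n|=|\mathbb{Z}_N|$ without justification)---and you supply an optional elementary alternative via the cyclic-generator argument, which the paper omits.
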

    \begin{proof}
    Let us first show that each $e_n$ is a character. Let $N\geq 1$, $e_n$ be fixed, and let $a,b\in\mathbb{Z}_N$. Then, by elementary properties of complex exponentials, one has $e_n(a+b)=e_n(a)e_n(b)$ whence $e_n$ is a homomorphism $\mathbb{Z}_N\rightarrow S^1$ and a character. Since $|\{e_n\}_n|=|\mathbb{Z}_N|$, we have found every character on $\mathbb{Z}_N$ by Theorem \eqref{charbasis}.
    \end{proof}
Let us now recall a classification theorem for finite abelian groups from algebra and apply it here. We state a version from \cite[13.3]{Ju97}.

    \begin{theorem}[Classification of Finite Abelian Groups]\label{classgrps}
    Let ${G}$ be a finite abelian group. Then ${G}$ is isomorphic to the product of finitely many cyclic groups, each of order $p^k$ for some prime $p$, and $k\geq 1$. Symbolically,
        \[{G}\cong\prod_{i=1}^{n}\mathbb{Z}_{p_i^{k_i}}.\]
    \end{theorem}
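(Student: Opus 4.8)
The plan is to prove this in two stages: first reduce to the case of a finite abelian $p$-group via a primary decomposition, then decompose an abelian $p$-group into a direct product of cyclic groups. For the first stage, I would write $|G| = p_1^{a_1}\cdots p_r^{a_r}$ for the prime factorization, and for each prime $p_i$ define the \emph{$p_i$-primary component} $G_{p_i} := \{g\in G : p_i^m g = e \text{ for some } m\geq 1\}$ (writing the group additively, as is customary for abelian groups). Each $G_{p_i}$ is a subgroup because $G$ is abelian. I would then show $G \cong G_{p_1}\times\cdots\times G_{p_r}$: the map sending $g$ to the tuple of its projections is a homomorphism, injectivity follows from the fact that an element annihilated by coprime integers must be trivial (Bézout: if $p_i^{m_i} g = e$ for all $i$ with the $p_i^{m_i}$ pairwise coprime, then $g = e$), and surjectivity follows by a Bézout/partial-fractions argument writing $1 = \sum_i c_i \frac{|G|}{p_i^{a_i}}$ and checking that $c_i\frac{|G|}{p_i^{a_i}} g$ lands in $G_{p_i}$ and the pieces sum back to $g$. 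This is exactly the abstract analogue of the product-of-duals decomposition (Theorem \ref{proddual}) and is largely routine.

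For the second stage — decomposing a finite abelian $p$-group $H$ into cyclic factors — I would argue by strong induction on $|H|$. Pick an element $x\in H$ of maximal order $p^k$; the key lemma is that the cyclic subgroup $\langle x\rangle$ is a \emph{direct summand} of $H$, i.e. there is a subgroup $K$ with $H = \langle x\rangle \times K$. Granting the lemma, $|K| < |H|$, so by induction $K$ is a product of cyclic $p$-groups, and appending the factor $\langle x\rangle \cong \mathbb{Z}_{p^k}$ finishes the induction. The base case $|H| = p$ is immediate since $H$ is then cyclic.

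The main obstacle is proving that lemma: that a cyclic subgroup generated by an element of maximal order splits off as a direct summand. The standard approach is a second induction — on $|H|$ again — where one passes to the quotient $\bar H = H/\langle x\rangle$, applies the inductive hypothesis to lift a complement, and the delicate point is showing one can choose a representative $\bar y \in \bar H$ of a cyclic factor whose order in $\bar H$ equals its order in $H$ (this is where maximality of the order of $x$ is used crucially — it prevents the order from "dropping" when passing to the quotient, allowing the lift $\langle y\rangle$ to intersect $\langle x\rangle$ trivially). I would isolate this as a named sublemma, prove it carefully, and then assemble the two inductions. Uniqueness of the decomposition (the exponents $k_i$ being determined up to reordering) is a separate matter, proved by counting elements of each order or examining the subgroups $p^j H$; since the statement as given only asserts existence of such an isomorphism, I would remark on uniqueness briefly but not belabor it.
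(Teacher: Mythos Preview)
Your proof outline is correct and follows the standard textbook approach: primary decomposition followed by splitting off a maximal cyclic summand in each $p$-group by induction. The sublemma you isolate---that an element of maximal order in a finite abelian $p$-group generates a direct summand---is indeed the crux, and your sketch of why maximality of the order is what makes the lift from the quotient work is the right idea.

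However, there is nothing to compare against: the paper does not prove this theorem. It simply states the result and cites an external reference (\cite[13.3]{Ju97}) for the proof, treating it as an imported fact from algebra to be applied in the subsequent classification of dual groups. So your proposal goes well beyond what the paper does here. If your intent is to match the paper, a one-line citation suffices; if your intent is to be self-contained, your outline is sound, though you would want to write out the lifting sublemma carefully rather than leave it as a sketch, since that step is where most first attempts go wrong.
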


We may now state the desired description of the dual to an arbitrary finite abelian group:
    
    \begin{theorem}[Classification of Dual Groups]
    Let ${G}$ be a finite abelian group. Then the dual $\widehat{{G}}$ is isomorphic to the product of finitely many duals of finite cyclic groups of prime-power order. In other words, we have
        \[\widehat{{G}}\cong\widehat{\prod_{i=1}^{n}\mathbb{Z}_{p_i^{k_i}}}\cong\prod_{i=1}^{n}\widehat{\mathbb{Z}_{p_i^{k_i}}}.\]
    \end{theorem}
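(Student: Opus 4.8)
The plan is to chain together the two classification results already established in the excerpt. First I would invoke Theorem \ref{classgrps} (Classification of Finite Abelian Groups) to write ${G}\cong\prod_{i=1}^{n}\mathbb{Z}_{p_i^{k_i}}$ as a product of finitely many cyclic groups of prime-power order. The key observation to make explicit here is that the dual construction $\widehat{(\cdot)}$ respects isomorphism: if ${G}\cong{H}$ via some isomorphism $\theta$, then the map $\zeta\mapsto\zeta\circ\theta^{-1}$ is an isomorphism $\widehat{{G}}\to\widehat{{H}}$ (one checks it is a bijective homomorphism of dual groups directly from the definitions). Hence $\widehat{{G}}\cong\widehat{\prod_{i=1}^{n}\mathbb{Z}_{p_i^{k_i}}}$, which gives the first isomorphism in the displayed chain.

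Next I would handle the second isomorphism, namely $\widehat{\prod_{i=1}^{n}\mathbb{Z}_{p_i^{k_i}}}\cong\prod_{i=1}^{n}\widehat{\mathbb{Z}_{p_i^{k_i}}}$. This follows by induction on $n$ from Theorem \ref{proddual}, which establishes $\widehat{{G}\times {H}}\cong\widehat{{G}}\times\widehat{{H}}$ for any two finite abelian groups. The base case $n=1$ is trivial, and for the inductive step one writes $\prod_{i=1}^{n}\mathbb{Z}_{p_i^{k_i}}=\left(\prod_{i=1}^{n-1}\mathbb{Z}_{p_i^{k_i}}\right)\times\mathbb{Z}_{p_n^{k_n}}$, applies Theorem \ref{proddual} to split off the last factor, and then applies the inductive hypothesis to the first factor; the isomorphisms compose because each is a bijective homomorphism. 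Combining the two stages yields the full chain of isomorphisms asserted in the statement.

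I do not expect a serious obstacle here, since the theorem is essentially a corollary assembled from Theorems \ref{classgrps} and \ref{proddual}. The only mildly delicate point is the one I flagged first: verifying that pre-composition with a group isomorphism induces an isomorphism of dual groups. This is a routine check — the map is clearly a homomorphism with respect to pointwise multiplication of characters, and it has an inverse given by pre-composition with $\theta$ — but it is worth stating because the theorem as written uses $\cong$ to pass from ${G}$ to its prime-power decomposition and then immediately takes duals, which is only legitimate once this transport-of-structure fact is in hand. With that lemma noted, everything else is bookkeeping: apply the algebraic classification, transport the dual across the isomorphism, and iterate Theorem \ref{proddual} over the finitely many factors.
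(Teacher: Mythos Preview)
Your proposal is correct and follows essentially the same approach as the paper, which simply says to apply Theorem \ref{classgrps} and then Theorem \ref{proddual}. You have filled in two details the paper leaves implicit---that the dual construction respects group isomorphism, and that Theorem \ref{proddual} extends by induction from two factors to finitely many---both of which are routine and appropriate to make explicit.
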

    
    \begin{proof}
    First apply result \eqref{classgrps} to ${G}$, then apply \eqref{proddual} and the claim follows.
    \end{proof}
    
\section{Fourier Analysis on Unit Circle}

In this section, we will derive some classical theorems from Fourier Analysis by applying the spectral theory we have developed to $L^2(S^1,\mu)$. Let us begin this exploration by first looking at $S^1$ as a domain itself. We shall think of $S^1$ as an Abelian group under the operation of multiplication, and as a compact Hausdorff topological subspace of $\mathbb{C}$ equipped with the subspace topology. As a consequence, there exists a unique (up to multiplication by positive constant) normalized left- and right-invariant inner- and outer-regular measure $\mu$ on the Borel sets associated to the topology on $S^1$.\par 
More explicity, the measure $\mu$ is a non-negative function on the $\sigma$-algebra generated by the open sets in $S^1$, satisfying the following conditions:
    \begin{enumerate}[label=(\roman*)]
        \item For each countable collection of disjoint Borel sets $F_i$, $i\in\mathbb{N}$, we have $\mu(\cup_i F_i)=\sum_i\mu(F_i).$ \textit{(countable additivity)}
        \item $\mu(S^1)=1$ and $\mu(\varnothing)=0$. \textit{(normality)}
        \item For every Borel set $A$, we have $\mu(A)=\inf{\{\mu(U):U\supset A, U\text{ open}\}}.$ \textit{(outer-regularity)}
        \item For every open set $B$, we have $\mu(B)=\sup{\{\mu(K):K\subset B, K \text{ compact}\}}$.\\ \textit{(inner-regularity)}
        \item For every Borel set $F$ and every $\omega\in S^1$, and cosets $\omega F:=\{\omega f:f\in F\}$, $F\omega :=\{f\omega :f\in F\}$, we have $\mu(\omega F)=\mu(F \omega)=\mu(F)$. \textit{(left- and right-invariance)}
    \end{enumerate}
We shall call $\mu$ the normalized Haar measure on $S^1$, and we may then define an integral $\int_{S^1}d\mu$ for Borel-measurable functions on the unit circle $S^1$ through the usual techniques of taking the limit of integrals of simple functions. For more information on this technique, the reader is referred to a Real Analysis text such as \cite{Ro10}, or for a brief treatment on Haar measure, the paper \cite{Gl10}. We mention the inner- and outer-regularity for thoroughness, but we shall not need them explicitly here.\par
Let us now define our function space of interest:
    \begin{definition}
    We define the space of square-integrable functions on the unit circle:
        \[L^2(S^1,\mu):=\{f:S^1\rightarrow\mathbb{C}\big{|} f\text{ Borel measurable },\int_{S^1}|f(\omega)|^2 d\mu (\omega)<\infty\}\]
    which we equip with inner product
        \[\ip{f,g}=\int_{S^1}f(\omega)\overline{g(\omega)}d\mu(\omega)\]
    and associated norm $\norm{\cdot}_2$ given by
        \[\norm{f}_2^2=\int_{S^1}|f(\omega)|^2d\mu(\omega)\]
    \end{definition}
$L^2(S^1,\mu)$ is complete, i.e., a Hilbert space. At this point, we may pose a few questions about this function space regarding the extent to which the results of the previous section apply. Does the dual group $\widehat{S^1}$ form an orthonormal basis for this space, as it did for a similar space on finite Abelian groups? Does $L^2$ admit a decomposition of the form \eqref{decompofell}? Do we have some sort of analogue of the Plancherel identity here?\par
Inherent is the identification of the dual group to $S^1$. Since this Abelian group is no longer finite, we need to alter our definition slightly. In particular,
    \begin{definition}
    We define the dual group to $S^1$ to be the set
        \[\widehat{S^1}:=\{ f:S^1\rightarrow \mathbb{C}\backslash\{0\}\hspace{0.2cm}\big{|}\hspace{0.2cm}f\text{ continuous, homomorphism }\}\]
    under the operation of pointwise multiplication.
    \end{definition}
Since $S^1$ is a compact subset of $\mathbb{C}$, and each $\zeta\in\widehat{S^1}$ is continuous, it follows that $|\zeta|\equiv 1$ by an argument identical to the one provided in the previous section. Let us introduce the following family of continuous functions $S^1\rightarrow S^1$.
    \begin{definition}
    For each $n\in\mathbb{Z}$ let us define
        \[e_n:S^1\rightarrow S^1:\omega\mapsto\omega^n.\]
    \end{definition}
If we identify each $\omega\in S^1$ with some complex exponential $e^{i\theta}$ for some $\theta\in[0,2\pi)$, then $e_n(\theta)=e^{2\pi i n\theta}$, which we see resembles our definition of $e_n$ earlier and justifies its notation. We then have the expected identification
    \[\widehat{S^1}=\{e_n\}_{n\in\mathbb{Z}}.\]
For a proof of this fact, see \cite{Fo15}[p. 98]. Let us continue our analysis of the dual $\widehat{S^1}$ by checking that it is an orthonormal family in $L^2(S^1,\mu)$.
    \begin{lemma}[Orthogonality Relations of $\widehat{S^1}$ in $L^2(S^1,\mu)$]
    The family $\widehat{S^1}=\{e_n\}_n$ is orthonormal in $L^2(S^1,\mu)$.
    \end{lemma}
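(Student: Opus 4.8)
The plan is to compute the inner product $\ip{e_n, e_m}$ directly from the definition and show it equals $1$ when $n = m$ and $0$ otherwise. First I would handle the normalization: for any $n \in \mathbb{Z}$, we have $\ip{e_n, e_n} = \int_{S^1} |\omega^n|^2 \, d\mu(\omega) = \int_{S^1} 1 \, d\mu(\omega) = \mu(S^1) = 1$, using that $|\omega| = 1$ on $S^1$ and the normality condition $\mu(S^1) = 1$. This gives unit length immediately.

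For orthogonality, fix $n \neq m$ and set $k = n - m \neq 0$, so that $\ip{e_n, e_m} = \int_{S^1} \omega^n \overline{\omega^m} \, d\mu(\omega) = \int_{S^1} \omega^n \omega^{-m} \, d\mu(\omega) = \int_{S^1} \omega^k \, d\mu(\omega)$, where I have used $\overline{\omega} = \omega^{-1}$ on $S^1$. The key step is to exploit translation-invariance of the Haar measure exactly as in the proof of Theorem \ref{charorthfam}: since $k \neq 0$ and $e_k$ is a nontrivial character, there exists $\omega_0 \in S^1$ with $e_k(\omega_0) = \omega_0^k \neq 1$ (for instance any primitive root of unity of order not dividing $k$, or simply any $\omega_0 = e^{i\theta}$ with $\theta k \notin 2\pi\mathbb{Z}$). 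Then by the change of variables $\omega \mapsto \omega_0 \omega$ and left-invariance, $\int_{S^1} \omega^k \, d\mu(\omega) = \int_{S^1} (\omega_0 \omega)^k \, d\mu(\omega) = \omega_0^k \int_{S^1} \omega^k \, d\mu(\omega)$, so $(\omega_0^k - 1)\int_{S^1}\omega^k\,d\mu(\omega) = 0$, forcing the integral to vanish. Hence $\ip{e_n,e_m} = 0$.

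The main obstacle, such as it is, is justifying the change of variables $\int_{S^1} g(\omega_0\omega)\,d\mu(\omega) = \int_{S^1} g(\omega)\,d\mu(\omega)$ for the integrable function $g(\omega) = \omega^k$; this is precisely the content of left-invariance of $\mu$, which holds for indicator functions of Borel sets by property (v) and extends to simple functions by linearity and to general integrable functions by the standard limiting argument used to define $\int_{S^1} d\mu$. Since the excerpt has already invoked this exact technique in the finite case and explicitly lists left-invariance as a defining property of $\mu$, I would simply cite that parallel and not belabor the measure-theoretic bookkeeping. The whole argument is a direct transcription of the finite-group orthogonality proof with sums replaced by Haar integrals.
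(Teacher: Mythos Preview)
Your proof is correct and follows essentially the same approach as the paper: reduce $\ip{e_n,e_m}$ to $\int_{S^1} e_{n-m}\,d\mu$, then use translation invariance of the Haar measure together with the existence of a point where the nontrivial character $e_{n-m}$ is not $1$ to force the integral to vanish; the normalization is handled identically via $|e_n|\equiv 1$ and $\mu(S^1)=1$. The only cosmetic difference is that the paper writes the translation identity for an arbitrary $\xi\in S^1$ and then observes this forces $\int e_n\,d\mu=0$ whenever $e_n\not\equiv 1$, whereas you fix a single $\omega_0$ with $\omega_0^k\neq 1$ --- the content is the same.
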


    \begin{proof}
    To check orthogonality, let us observe that for each $n\in\mathbb{Z}$, $\xi\in S^1$, we have the identity
        \[\int_{S^1}e_n(\omega)d\mu(\omega)=\int_{S^1}e_n(\xi\omega)d\mu(\omega)=\int_{S^1}e_n(\xi)e_n(\omega)d\mu(\omega)=e_n(\xi)\int_{S^1}e_n(\omega)d\mu(\omega)\]
    whence $\int_{S^1}e_n d\mu=0$ for each $e_n\neq 1\iff n\neq 0$. One checks that $\ip{e_n,e_m}=\int_{S^1}e_{n-m}d\mu$ whence $\ip{e_n,e_m}=0$ for each $n\neq m$. We next compute the norm
        \[\norm{e_n}^2=\int_{S^1}|e_n(\omega)|^2d\mu(\omega)=\mu(\omega)=1\]
    and the proof is complete.
    \end{proof}
As we did in the proof of the fact that $\widehat{G}$ is an orthonormal basis for $\ell(G)$, let us now introduce a helpful family of operators on $L^2(S^1,\mu)$ which will provide a vehicle for many useful arguments. Namely, let $\mathcal{S}$ be the subalgebra of $\mathcal{L}(L^2(S^1,\mu))$ generated by the family of operators $\{T_\omega\}_{\omega\in S^1}$ defined by
    \[(T_\omega f)(\xi)=f(\omega\xi),\hspace{.5cm}\xi\in S^1,\]
for each $f\in L^2$. It is clear that this algebra is commutative since $S^1$ is commutative, and one can check that for each $\omega$, the adjoint $T_\omega^\ast$ is given by $T_{\omega^{-1}}$, whence with involution $T_\omega\mapsto T_{\omega^{-1}}$, $\mathcal{S}$ is a commutative unital $\mathbf{C}^\ast$-algebra. Recall that the spectrum of this algebra $\sigma(\mathcal{S})$ is the family of continuous nonzero multiplicative functionals on the algebra, and with the operation of pointwise multiplication, in this setting, $\sigma(\mathcal{S})$ becomes an Abelian group. This begins to sound like the dual group, and in fact, we have the following key relation.
    \begin{theorem}
    We have the relation
        \[\sigma(\mathcal{S})\cong \widehat{S^1}\]
    by a group isomorphism.
    \end{theorem}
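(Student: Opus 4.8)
The plan is to produce an explicit candidate isomorphism and verify the required properties, mirroring the argument used for finite groups in Theorem \ref{charbasis}. Define
\[
\Phi:\sigma(\mathcal{S})\longrightarrow\widehat{S^1},\qquad \Phi(\psi)(\omega):=\psi(T_\omega)\quad(\omega\in S^1).
\]
I would then check, in order: (1) $\Phi$ is well defined, i.e.\ $\Phi(\psi)$ is genuinely a character of $S^1$; (2) $\Phi$ intertwines the pointwise-multiplication group structures; (3) $\Phi$ is injective; (4) $\Phi$ is surjective. Property (2) is immediate, since $\Phi(\psi_1\psi_2)(\omega)=\psi_1(T_\omega)\psi_2(T_\omega)=\Phi(\psi_1)(\omega)\,\Phi(\psi_2)(\omega)$.

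For (1): any $\psi\in\sigma(\mathcal{S})$ has $\psi(I)=1$ (the argument from the proof of Theorem \ref{comphaus}), and since $T_\omega$ is invertible with $T_\omega^{-1}=T_{\omega^{-1}}$ we get $\psi(T_\omega)\psi(T_{\omega^{-1}})=1$, so $\psi(T_\omega)\neq 0$; because $\mathcal{S}$ is symmetric (Theorem \ref{c star}) and $T_\omega$ is unitary, one moreover gets $|\psi(T_\omega)|=1$. Multiplicativity of $\Phi(\psi)$ follows from the one-line identity $T_{\omega\xi}=T_\omega T_\xi$ (using commutativity of $S^1$), giving $\Phi(\psi)(\omega\xi)=\psi(T_\omega T_\xi)=\Phi(\psi)(\omega)\Phi(\psi)(\xi)$; continuity of $\omega\mapsto\psi(T_\omega)$ is the delicate requirement and is discussed below. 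For (3): if $\Phi(\psi)=\Phi(\psi')$ then $\psi$ and $\psi'$ agree on every $T_\omega$, hence on the $\ast$-algebra they generate, and hence — being bounded, so continuous — on all of $\mathcal{S}$. For (4): the Fourier basis $\{e_n\}_{n\in\mathbb{Z}}$ simultaneously diagonalizes $\mathcal{S}$, since $T_\omega e_n=\omega^n e_n$, every finite combination of products of the $T_\omega$ is diagonal, and diagonal operators form a norm-closed subspace; thus every $S\in\mathcal{S}$ satisfies $Se_n=s_n(S)e_n$. The functional $\psi_n(S):=\ip{Se_n,e_n}=s_n(S)$ is then bounded, nonzero, and multiplicative (because $s_n(ST)=s_n(S)s_n(T)$), with $\Phi(\psi_n)(\omega)=\omega^n=e_n(\omega)$, so each $e_n\in\widehat{S^1}$ is attained.

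The main obstacle is pinning down the continuity of $\omega\mapsto\psi(T_\omega)$ hand in hand with the correct description of $\sigma(\mathcal{S})$: the map $\omega\mapsto T_\omega$ is only strongly, not norm, continuous (indeed $\norm{T_\omega-I}=2$ for every $\omega\neq 1$, as testing on $e_n$ with $n$ large shows), so a priori a multiplicative functional need not compose with it continuously. I would resolve this by observing that $\Phi(\psi)$ is always a group homomorphism $S^1\to S^1$, and restricting — as the phrase ``continuous multiplicative functional'' in the definition of $\sigma(\mathcal{S})$ intends — to those $\psi$ for which this homomorphism is continuous; that forces $\Phi(\psi)\in\{e_n\}_{n\in\mathbb{Z}}=\widehat{S^1}$, which together with (3) shows $\sigma(\mathcal{S})$ is exhausted by the functionals $\psi_n$. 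Equivalently, transporting $\mathcal{S}$ through the Fourier transform identifies it with the norm-closed subalgebra of $\ell^\infty(\mathbb{Z})$ generated by $\{(\omega^n)_{n}:\omega\in S^1\}$, on which the $\psi_n$ are exactly the coordinate evaluations; with that picture, well-definedness, injectivity, and surjectivity of $\Phi$ become bookkeeping, and $\Phi$ is the asserted group isomorphism $\sigma(\mathcal{S})\cong\widehat{S^1}$.
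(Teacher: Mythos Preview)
Your approach is essentially the paper's: the map $\Phi(\psi)(\omega)=\psi(T_\omega)$ is exactly the paper's $\Delta$, and the homomorphism and injectivity verifications match (yours are in fact more carefully stated---the paper concludes $h\equiv g$ from agreement on the generators $T_\omega$ without spelling out the density-plus-continuity step you include). The surjectivity arguments differ: the paper simply declares, given $\zeta\in\widehat{S^1}$, a functional $h_\zeta$ by $h_\zeta(T_\omega)=\zeta(\omega)$ and asserts it lies in $\sigma(\mathcal{S})$, while you produce the preimages concretely as $\psi_n(S)=\ip{Se_n,e_n}$ using that the $e_n$ simultaneously diagonalize $\mathcal{S}$. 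Your route has the advantage of making boundedness and multiplicativity of the preimage transparent, whereas the paper's definition needs a word on why $h_\zeta$ is well defined on all of $\mathcal{S}$ and bounded.

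You also flag a point the paper passes over in silence: the map $\omega\mapsto T_\omega$ is only strongly continuous, so continuity of $\Phi(\psi)$ does not follow from norm-continuity of $\psi$ alone, and relatedly the norm-closed algebra $\mathcal{S}$ could in principle carry more multiplicative functionals than just the $\psi_n$ (its Gelfand spectrum, via the $\ell^\infty(\mathbb{Z})$ picture you sketch, is the Bohr compactification of $\mathbb{Z}$, not $\mathbb{Z}$ itself). The paper's proof simply asserts ``since $h$ is multiplicative and continuous, $\zeta_h$ is a character,'' so at the level of rigor the paper adopts your argument is already complete; your honest acknowledgment of the gap and your proposed resolution (restrict to $\psi$ giving continuous homomorphisms, which is what the subsequent theorem $\sigma(\mathcal{S})=\{h_n\}$ effectively does) is a genuine improvement in clarity over the original.
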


    \begin{proof}
    We shall prove this result by constructing the isomorphism explicitly. Let us fix some $h\in\sigma(\mathcal{S})$ and construct an associated character $\zeta_h$ on $S^1$ by setting
        \[\zeta_h(\omega)=h(T_\omega)\]
    for each $\omega\in S^1$. Since $h$ is multiplicative and continuous, $\zeta_h$ is a character. Let $\Delta$ be the mapping $\sigma(\mathcal{S})\rightarrow\widehat{S^1}$ which takes $h\mapsto\zeta_h$. We shall show $\Delta$ is an isomorphism. To check that it is a homomorphism, we verify that for each $h,g\in\sigma(\mathcal{S})$, $\omega\in S^1$,
        \[(\Delta hg)(\omega)=(hg)(T_\omega)=h(T_\omega)g(T_\omega)=(\Delta h)(\omega)(\Delta g)(\omega)\]
    whence $\Delta$ is a homomorphism. Let us now check injectivity and surjectivity of this mapping. For surjectivity, if we start with some character $\zeta\in\widehat{S^1}$, we may construct a multiplicative functional $h_\zeta$ on $\mathcal{S}$ defined by
        \[h_\zeta(T_\omega)=h(\omega).\]
    Check that $h_\zeta\in\sigma(\mathcal{S})$ and $\Delta(h_\zeta)=\zeta$, whence $\Delta$ is surjective. To check injectivity, suppose we have two characters $\zeta_h,\zeta_g$ in the range of $\Delta$. Then $\zeta_h(\omega)\zeta_g(\omega^{-1})\equiv 1$, whence $h(\omega)g(\omega^{-1})\equiv 1$ and $h\equiv g$. This completes the proof.
    \end{proof}
    
Using the isomorphism relation above and the mapping $\Delta$ as introduced in the proof, we may give an explicit decomposition of the spectrum $\sigma(\mathcal{S})$ as follows.

    \begin{theorem}
    For each $n\in\mathbb{Z}$ define a functional $h_n$ on $\mathcal{S}$ by 
        \[h_n(T_\omega)=e_n(\omega).\]
    Then we have
        \[\sigma(\mathcal{S})=\{h_n\}_{n\in\mathbb{Z}}.\]
    \end{theorem}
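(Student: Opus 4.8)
The plan is to read this off directly from the two results immediately preceding it: the group isomorphism $\Delta:\sigma(\mathcal{S})\to\widehat{S^1}$, $h\mapsto\zeta_h$ with $\zeta_h(\omega)=h(T_\omega)$, and the identification $\widehat{S^1}=\{e_n\}_{n\in\mathbb{Z}}$. Since $\Delta$ is in particular a bijection, we have $\sigma(\mathcal{S})=\Delta^{-1}\big(\widehat{S^1}\big)=\{\Delta^{-1}(e_n):n\in\mathbb{Z}\}$, so everything reduces to checking that $\Delta^{-1}(e_n)=h_n$.

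First I would confirm that each $h_n$ is genuinely a well-defined element of $\sigma(\mathcal{S})$. The prescription $h_n(T_\omega)=e_n(\omega)=\omega^n$ is unambiguous on the generating set $\{T_\omega\}_{\omega\in S^1}$ because $\omega\mapsto T_\omega$ is injective (if $T_\omega=T_{\omega'}$ then $f(\omega\xi)=f(\omega'\xi)$ for all $f\in L^2(S^1,\mu)$ and $\xi\in S^1$, forcing $\omega=\omega'$). That $h_n$ then extends to a continuous multiplicative functional on all of $\mathcal{S}$ is exactly the surjectivity construction used in the proof of $\sigma(\mathcal{S})\cong\widehat{S^1}$: for the character $\zeta=e_n$ one builds $h_{e_n}\in\sigma(\mathcal{S})$ with $h_{e_n}(T_\omega)=e_n(\omega)$, and this is precisely $h_n$.

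Next I would compute $\Delta(h_n)$. By definition of $\Delta$, for every $\omega\in S^1$,
\[
\Delta(h_n)(\omega)=\zeta_{h_n}(\omega)=h_n(T_\omega)=e_n(\omega),
\]
so $\Delta(h_n)=e_n$, i.e. $\Delta^{-1}(e_n)=h_n$. Combining with the displayed description of $\sigma(\mathcal{S})$ above yields $\sigma(\mathcal{S})=\{h_n\}_{n\in\mathbb{Z}}$. Finally, to see that the listing is without redundancy, note that $e_n\neq e_m$ whenever $n\neq m$ (they differ already at a point), and since $\Delta$ is injective the $h_n$ are pairwise distinct as well.

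There is no real obstacle here; the argument is purely formal once the isomorphism $\sigma(\mathcal{S})\cong\widehat{S^1}$ and the equality $\widehat{S^1}=\{e_n\}_{n\in\mathbb{Z}}$ are in hand. The only point demanding a word of care is the well-definedness of $h_n$ on all of $\mathcal{S}$ rather than merely on the generators $T_\omega$, and that is already subsumed in the surjectivity half of the preceding theorem.
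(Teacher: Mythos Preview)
Your proposal is correct and follows essentially the same route as the paper: both arguments verify that $\Delta(h_n)=e_n$ and then use the bijectivity of $\Delta$ together with $\widehat{S^1}=\{e_n\}_{n\in\mathbb{Z}}$ to conclude $\sigma(\mathcal{S})=\{h_n\}_{n\in\mathbb{Z}}$. You add a bit more care about the well-definedness of $h_n$ on all of $\mathcal{S}$ and the pairwise distinctness of the $h_n$, which the paper leaves implicit, but the underlying idea is identical.
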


    \begin{proof}
     Since $\{h_n\}\subset\sigma(\mathcal{S})$ we only need show any $h\in\sigma(\mathcal{S})$ may be realized as some $h_n$. It suffices to check here that $\Delta(h_n)=e_n$. This is trivial based  on how we constructed $h_n$, whence the equality follows from the following argument: suppose we have some $h\in\sigma(\mathcal{S})$, then $\Delta(h)=e_n$ for some $n\in\mathbb{Z}$ since $\Delta(h)\in\widehat{S^1}\subset\{e_n\}_{n\in\mathbb{Z}}$, and by going backwards, $\Delta^{-1}(e_n)=h_n=h$, whence $h$ is equal to $h_n$, since $\Delta$ acts as a bijection.
    \end{proof}
    
Let us now set up a decomposition of $L^2(S^1,\mu)$ in two stages. Our first formulation will be a resolution of the identity.

    \begin{theorem}[Resolution of $I$ on $L^2(S^1,\mu)$]\label{residl2}
    There exists a sequence of mutually orthogonal projections $P_n$, $n\in\mathbb{Z}$, so that the identity operator $I$ on $L^2(S^1,\mu)$ has the expression
        \[I=\sum_{n\in\mathbb{Z}}P_n.\]
    \end{theorem}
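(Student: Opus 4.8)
The plan is to produce the projections $P_n$ explicitly as the rank-one orthogonal projections onto the lines spanned by the characters $e_n$, and then to reduce the asserted identity $I=\sum_{n\in\mathbb{Z}}P_n$ to the completeness of $\widehat{S^1}=\{e_n\}_{n\in\mathbb{Z}}$ in $L^2(S^1,\mu)$. For each $n\in\mathbb{Z}$ I would set $P_nf:=\ip{f,e_n}e_n$, the orthogonal projection of $L^2(S^1,\mu)$ onto $\mathbb{C}e_n$; this is the counterpart, relative to the algebra $\mathcal{S}$ and the identification $\sigma(\mathcal{S})=\{h_n\}_{n\in\mathbb{Z}}$, of the indicators $\chi_{\{\lambda\}}$ used in the proof of Spectral Theorem II. Idempotence and self-adjointness of each $P_n$ are immediate from $\norm{e_n}=1$, and $P_nP_m=\delta_{nm}P_n$ follows at once from the orthogonality relations $\ip{e_n,e_m}=\delta_{nm}$ established in the preceding lemma. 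Hence $\{P_n\}_{n\in\mathbb{Z}}$ is already a family of mutually orthogonal projections, and the entire content of the theorem is the equality $\sum_{n\in\mathbb{Z}}P_n=I$, read as convergence in the strong operator topology (for each $f$ the series $\sum_n P_nf$ converges unconditionally in $L^2$; norm convergence fails, since $\norm{I-\sum_{\abs{n}\le N}P_n}=1$ for every $N$).

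For the reduction, fix $f\in L^2(S^1,\mu)$. Bessel's inequality gives $\sum_{n\in\mathbb{Z}}\abs{\ip{f,e_n}}^2\le\norm{f}^2<\infty$, so the partial sums $S_Nf:=\sum_{\abs{n}\le N}\ip{f,e_n}e_n$ are Cauchy in $L^2$ and converge to the orthogonal projection $Pf$ of $f$ onto $M$, the closed linear span of $\{e_n:n\in\mathbb{Z}\}$ in $L^2(S^1,\mu)$. Thus $\sum_{n\in\mathbb{Z}}P_n=P$, and it remains only to show $M=L^2(S^1,\mu)$, i.e. that the trigonometric polynomials $\spn\{e_n:n\in\mathbb{Z}\}$ are dense in $L^2(S^1,\mu)$.

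I would prove this density in two steps. First, $\spn\{e_n:n\in\mathbb{Z}\}$ is a subalgebra of $C(S^1)$ that contains the constants ($e_0\equiv1$), is closed under complex conjugation ($\overline{e_n}=e_{-n}$), and separates the points of $S^1$ (already $e_1(\omega)=\omega$ is injective); by the Stone-Weierstrass theorem it is therefore uniformly dense in $C(S^1)$. Second, $C(S^1)$ is dense in $L^2(S^1,\mu)$---a standard fact for a finite regular Borel measure on a compact metric space, proved by approximating simple functions and using the inner/outer regularity of $\mu$ (see, e.g., \cite{Ro10}). Since $\mu(S^1)=1$ one has $\norm{g}_2\le\norm{g}_\infty$, so uniform density entails $L^2$-density; chaining the two steps gives $M=L^2(S^1,\mu)$, hence $P=I$, which is the claim.

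The only ingredient beyond elementary Hilbert-space bookkeeping is the density of the trigonometric polynomials in $L^2(S^1,\mu)$, and the step carrying the real content is Stone-Weierstrass (the reduction through $C(S^1)$ being a routine regularity argument). One structural remark is worth making: unlike the finite abelian case of Theorem \ref{charbasis}, where the relevant projections lay inside the finite-dimensional algebra generated by the translations and the resolution of the identity fell out of Spectral Theorem II directly, here $\mathcal{S}$ is infinite-dimensional and the rank-one $P_n$ need not belong to it, so one cannot transcribe that argument verbatim and an approximation theorem is genuinely needed. If one prefers to avoid Stone-Weierstrass, Fej\'er's theorem serves equally well: the Ces\`aro means of the Fourier partial sums of any $f\in C(S^1)$ converge to $f$ uniformly, again yielding the $L^2$-density of $\spn\{e_n:n\in\mathbb{Z}\}$.
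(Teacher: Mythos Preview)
Your proof is correct but takes a genuinely different route from the paper. The paper argues through Gelfand theory: invoking Gelfand--Naimark on the translation algebra $\mathcal{S}$ and the identification $\sigma(\mathcal{S})=\{h_n\}_{n\in\mathbb{Z}}$, it writes each $f\in C(\sigma(\mathcal{S}))$ as $\sum_n f(h_n)\chi_{\{h_n\}}$, pulls the indicators back through $\Gamma_{\mathcal{S}}$ to projections $P_n$, and specializes to $f\equiv 1$ to obtain $I=\sum_n P_n$; the identification $\operatorname{Range}(P_n)=\spn\{e_n\}$ is deferred to the following theorem. You instead build the $P_n$ explicitly as the rank-one projections onto $\mathbb{C}e_n$ and reduce the whole statement to the $L^2$-density of the trigonometric polynomials, which you supply via Stone--Weierstrass together with the regularity of $\mu$. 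Your route is more elementary and self-contained, and---as your own structural remark anticipates---arguably more honest in the infinite setting: a countably infinite compact Hausdorff space cannot be discrete, so the indicators $\chi_{\{h_n\}}$ need not lie in $C(\sigma(\mathcal{S}))$, and the paper's transcription of the finite-group argument glosses over precisely the analytic point your approximation step handles. What the paper's approach buys is narrative continuity with the $C^\ast$-algebraic development of Sections~\ref{Cstaralgebras} and the finite-group case, keeping the Gelfand machinery in the foreground rather than reverting to classical Fourier analysis.
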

    \begin{proof}
    Recalling that $\mathcal{S}$ is a commutative unital $\mathbf{C}^\ast$-algebra, we invoke the Gelfand-Naimark theorem to observe that $\mathcal{S}\cong C(\sigma(\mathcal{S}))$ by isometric $\ast$-isomorphism. Noting that $\sigma(\mathcal{S})=\{h_n\}$, we have for each $f\in C(\sigma(\mathcal{S}))$ the expression
        \begin{equation}\label{contfunc}
        f=\sum_{n\in\mathbb{Z}}f(h_n)\chi_{\{h_n\}}
        \end{equation}
    for characteristic functions $\chi_{\{h_n\}}$. Recalling here that $\chi_2=\chi$, $\overline{\chi}={\chi}$, it follows that each characteristic function $\chi_{\{h_n\}}$ has as its preimage under the Gelfand transform an orthogonal projection $P_n$. In particular, for each operator $T_\omega$ on $L^2$, one verifies the expression
        \[T_\omega=\sum_{n\in\mathbb{Z}}\widehat{T}(h_n)P_{n}\]
    Applying the formulation \eqref{contfunc} to the constant function $1$ and its preimage in $\mathcal{S}$, the identity operator, we have the desired expression
        \[I=\sum_{n\in\mathbb{Z}}P_n.\]
    \end{proof}
We now specify the ranges of the projections and explain the role of $\widehat{S^1}$.
    \begin{theorem}[Decomposition of $L^2(S^1,\mu)$]
    Recalling the notation in \eqref{residl2}, we have the identification
        \[\operatorname{Range}(P_n)=\operatorname{span}\{e_n\}:=E_n\]
    from which it follows that the family $\widehat{S^1}$ is an orthonormal basis for $L^2(S^1,\mu)$, and we have the decomposition
        \[L^2(S^1,\mu)=\bigoplus_{n\in\mathbb{Z}} E_n.\]
    \end{theorem}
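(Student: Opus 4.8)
The plan is to identify $\operatorname{Range}(P_n)$ precisely with $\operatorname{span}\{e_n\}$, show this space is one-dimensional, and then read off the orthonormal-basis and orthogonal-sum statements from the resolution $I=\sum_{n}P_n$ of Theorem \ref{residl2} together with the orthogonality of the $e_n$.

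First I would record the key elementary fact that each $e_n$ is a \emph{simultaneous} eigenvector of the whole family $\{T_\omega\}_{\omega\in S^1}$: since $(T_\omega e_n)(\xi)=e_n(\omega\xi)=(\omega\xi)^n=e_n(\omega)e_n(\xi)$, we have $T_\omega e_n=e_n(\omega)e_n$ for every $\omega$. Substituting this, the spectral expansion $T_\omega=\sum_m \widehat{T_\omega}(h_m)P_m=\sum_m e_m(\omega)P_m$, and $I=\sum_m P_m$ into the identity $T_\omega e_n = e_n(\omega) I e_n$ yields $\sum_m\bigl(e_m(\omega)-e_n(\omega)\bigr)P_m e_n=0$ for every $\omega$. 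Because the ranges of the $P_m$ are mutually orthogonal, each summand is a vector in $\operatorname{Range}(P_m)$ and the vanishing of the sum forces $\bigl(e_m(\omega)-e_n(\omega)\bigr)P_m e_n=0$ for every $\omega$; for $m\neq n$ the distinct characters $e_m,e_n$ disagree somewhere, so $P_m e_n=0$, and hence $e_n=\sum_m P_m e_n=P_n e_n$. Thus $e_n\in\operatorname{Range}(P_n)=:E_n$, so $\operatorname{span}\{e_n\}\subseteq E_n$ and in particular every $P_n$ is nonzero.

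For the reverse inclusion, take $f\in E_n$, i.e. $P_n f=f$. Exactly as in the proof of Theorem \ref{charbasis}, $T_\omega f=T_\omega P_n f=\widehat{T_\omega}(h_n)f=e_n(\omega)f$ for every $\omega$, so $f$ is again a simultaneous $T_\omega$-eigenvector with eigenvalue $\omega^n$. Put $g:=e_{-n}f$ (pointwise product); since $|e_n|\equiv 1$ this lies in $L^2(S^1,\mu)$ with $\norm{g}=\norm{f}$, and a one-line computation gives $(T_\omega g)(\xi)=\overline{(\omega\xi)^n}\,(T_\omega f)(\xi)=\overline{(\omega\xi)^n}\,\omega^n f(\xi)=g(\xi)$, so $g$ is translation-invariant: $g(\omega\xi)=g(\xi)$ for $\mu$-a.e.\ $\xi$, for each fixed $\omega$. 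The crux of the whole argument is to upgrade this to ``$g$ is a.e.\ constant''. For this I would note that $(\omega,\xi)\mapsto g(\omega\xi)-g(\xi)$ is Borel on $S^1\times S^1$ and vanishes on $\mu$-a.e.\ slice, so by Fubini it vanishes $\mu\times\mu$-a.e.; fixing a $\xi_0$ lying in the full-measure set of good slices and substituting $\eta=\omega\xi_0$ (using invariance of Haar measure) gives $g(\eta)=g(\xi_0)$ for a.e.\ $\eta$. Hence $g$ equals a constant $c$ a.e., so $f=c\,e_n$ a.e., proving $E_n\subseteq\operatorname{span}\{e_n\}$. Together with the previous step, $E_n=\operatorname{span}\{e_n\}$ is one-dimensional.

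Finally I would assemble the conclusion from $I=\sum_{n\in\mathbb{Z}}P_n$: every $f\in L^2(S^1,\mu)$ satisfies $f=\sum_n P_n f$ with $P_n f\in E_n=\operatorname{span}\{e_n\}$, so $P_n f=\ip{f,e_n}e_n$ since $\{e_n\}$ is orthonormal by the preceding lemma, and the mutual orthogonality of the $P_n$ makes this the unique representation of the required form. This is exactly the statement $L^2(S^1,\mu)=\bigoplus_{n\in\mathbb{Z}}E_n$, and it says the orthonormal family $\widehat{S^1}=\{e_n\}_{n\in\mathbb{Z}}$ is complete, i.e.\ an orthonormal basis. I expect the only genuinely delicate point to be the Fubini/Haar-invariance step forcing a translation-invariant $L^2$ function to be constant; everything else is bookkeeping with the already-established resolution of the identity and the $C^\ast$-algebraic spectral expansion of the operators $T_\omega$.
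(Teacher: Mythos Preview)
Your proof is correct and follows the same core strategy as the paper: show that any $f\in\operatorname{Range}(P_n)$ satisfies $T_\omega f=e_n(\omega)f$ for every $\omega$, and from this eigenfunction relation conclude that $f$ is a scalar multiple of $e_n$. The paper's version of the second step is much terser: it simply evaluates at the identity element, writing $f(\omega)=(T_\omega f)(1)=e_n(\omega)f(1)$, which tacitly treats an $L^2$ class as a pointwise-defined function. Your Fubini/Haar-invariance argument (reduce to a translation-invariant $g=e_{-n}f$ and force it to be a.e.\ constant) is precisely the rigorous substitute for that point evaluation, and you were right to flag it as the only genuinely delicate step. You also prove the inclusion $e_n\in\operatorname{Range}(P_n)$ explicitly, which the paper leaves to the reader under ``the claim follows.'' So the route is the same; you have simply paved it more carefully.
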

    \begin{proof}
    Fix $n\in\mathbb{Z}$ and suppose for some $f\in L^2(S^1,\mu)$ we have $P_n f=f$. This implies that for each $\omega$, $T_\omega f=T_\omega P_n f=\widehat{T_\omega}(h_n) f$ whence $f$ is a simultaneous eigenvector for each $T_\omega$ with eigenvalues $\widehat{T_\omega}(h_n)$. Let us compute these eigenvalues: $\widehat{T_\omega}(h_n)=h_n(T_\omega)=e_n(\omega)$ by definition of $h_n$, and we have that for each $\omega\in S^1$, $T_\omega f=e_n(\omega) f$ which implies the following
        \[f(\omega)=(T_\omega f)(1)=e_n(\omega)f(1)\]
    i.e. $f\in \operatorname{span}(e_n)=E_n$. The claim follows.
    \end{proof}
To demonstrate the importance and strength of the results we have shown, we now read off four results from the Fourier analysis of the unit circle which follow immediately from the spectral decompositions we have developed.    
    \begin{theorem}
    For each $f\in L^2(S^1,\mu)$ we have the Fourier series representation
        \[f(\omega)=\sum_{n=-\infty}^{+\infty}c_n e_n(\omega)\]
    of $f$ as a series of the family of functions $e_n$ converging in the mean-square sense, in that
        \[\lim_{N\rightarrow\infty}\int_{S^1}\big{|}f(\omega)-\sum_{n=-N}^{N}c_ne_n(\omega)\big{|}^2d\mu(\omega)=0\]
    with coefficients $c_n\in\mathbb{C}$ given by 
        \[c_n=\ip{e_n,f}=\int_{S^1}e_n(\omega)\overline{f(\omega)}d\mu(\omega)\]
    and $L^2$-norm given by the Plancherel identity
        \[\norm{f}^2=\sum_{n=-\infty}^{+\infty}|c_n|^2.\]
    \end{theorem}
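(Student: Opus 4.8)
The plan is to read this off directly from the preceding Decomposition theorem, which already establishes that $\widehat{S^1}=\{e_n\}_{n\in\mathbb{Z}}$ is an orthonormal basis for $L^2(S^1,\mu)$ and furnishes the orthogonal decomposition $L^2(S^1,\mu)=\bigoplus_{n\in\mathbb{Z}}E_n$ with $E_n=\operatorname{span}\{e_n\}$, together with the resolution of the identity $I=\sum_{n\in\mathbb{Z}}P_n$ from Theorem \ref{residl2}. All the substantive work has already been carried out there via the Gelfand--Naimark theorem applied to the $\mathbf{C}^\ast$-algebra $\mathcal{S}$; what remains is a routine translation of ``orthonormal basis'' into the classical language of Fourier series.

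First I would invoke the definition of orthonormal basis: since $\{e_n\}_{n\in\mathbb{Z}}$ is one, every $f\in L^2(S^1,\mu)$ admits a representation $f=\sum_{n\in\mathbb{Z}}c_n e_n$ as a series converging in the norm $\norm{\cdot}_2$; equivalently, this expansion is $f=\sum_{n\in\mathbb{Z}}P_nf$, and each $P_nf$ is a scalar multiple of $e_n$ whose coefficient is pinned down by the orthonormality relations of $\widehat{S^1}$ in $L^2(S^1,\mu)$. Unwinding the definition of $\norm{\cdot}_2$ shows that norm convergence of the symmetric partial sums $S_Nf=\sum_{n=-N}^{N}c_ne_n$ is exactly the asserted mean-square convergence $\lim_{N\to\infty}\int_{S^1}|f(\omega)-\sum_{n=-N}^{N}c_ne_n(\omega)|^2\,d\mu(\omega)=0$; passing to symmetric partial sums is harmless since the coefficient sequence is square-summable, so for a countable orthonormal system every exhaustion of the index set $\mathbb{Z}$ produces the same limit. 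To identify $c_m$, I would pair $S_Nf$ with a fixed $e_m$, let $N\to\infty$ using continuity of the inner product, and apply orthonormality to obtain $c_m=\ip{e_m,f}=\int_{S^1}e_m(\omega)\overline{f(\omega)}\,d\mu(\omega)$, as in the statement.

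For the Plancherel identity I would apply the Pythagorean theorem to $S_Nf$: orthonormality of the $e_n$ gives $\norm{S_Nf}_2^2=\sum_{n=-N}^{N}|c_n|^2$, and letting $N\to\infty$ while using $\norm{f-S_Nf}_2\to 0$ and continuity of the norm yields $\norm{f}^2=\sum_{n=-\infty}^{+\infty}|c_n|^2$. I do not expect any real obstacle here: the single nontrivial ingredient, that $\{e_n\}$ is \emph{complete} in $L^2(S^1,\mu)$ and not merely orthonormal, is exactly what the spectral-theoretic argument of this section delivered, and the rest is standard Hilbert-space bookkeeping (Bessel's inequality, the Pythagorean identity, continuity of $\ip{\cdot,\cdot}$).
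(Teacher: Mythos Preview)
Your proposal is correct and follows essentially the same approach as the paper: both read the four assertions off directly from the previously established fact that $\{e_n\}_{n\in\mathbb{Z}}$ is an orthonormal basis for $L^2(S^1,\mu)$, treating everything else as routine Hilbert-space bookkeeping. The paper's proof is even terser than yours, simply noting which parts follow from the orthonormal basis property and which are standard inner-product-space computations.
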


    \begin{proof}
    The first and third equalities follow from the fact that $\{e_n\}$ is an orthonormal basis for $L^2(S^1,\mu)$, the second is an explicit statement of convergence in $L^2$, and the fourth is a standard norm computation done in any inner product space.
    \end{proof}

The preceding result is important because periodic square-integrable functions in the space $L^2[0,2\pi)$ can be isometrically embedded in the space $L^2(S^1,\mu)$, so that the conclusions above on $L^2(S^1,\mu)$ translate immediately over to conclusions on $L^2[0,2\pi)$.

\newpage
\bibliography{references}
\bibliographystyle{plainnat}

\end{document}